\newtheorem{thm}{Theorem}[section]
\newtheorem{lemma}{Lemma}[section]
\newtheorem{prop}{Proposition}[section]
\newtheorem{rmk}{Remark}[section]
\theoremstyle{definition}
 \numberwithin{equation}{section}
\newcommand{\rr}{\mathbb{R}}
\newcommand{\al}{\alpha}
\newcommand{\de}{\delta}
 \newcommand{\eps}{\epsilon}
\newcommand{\la}{\lambda}
 \renewcommand{\(}{\left(}
\renewcommand{\)}{\right)}
\renewcommand{\[}{\left[}
\renewcommand{\]}{\right]}
\begin{document}
\title[New blow-up phenomena for  {\em SU(n+1)} Toda system]{New blow-up phenomena for {\em SU(n+1)} Toda system}
\author{Monica Musso}
\address[Monica Musso] {Departamento de Matem�ticas,
Pontificia Universidad Cat\'olica de Chile
Casilla 306, Correo 22 Santiago, Chile. }
\email{mmusso@mat.puc.cl}

\author{Angela Pistoia}
\address[Angela Pistoia] {Dipartimento SBAI, Universit\`{a} di Roma ``La Sapienza", via Antonio Scarpa 16, 00161 Roma, Italy}
\email{pistoia@dmmm.uniroma1.it}

\author{Juncheng Wei}
\address[Juncheng Wei] {Department of Mathematics,  The Chinese University of Hong Kong, Shatin, N.T., Hong Kong  and
Department of Mathematics, University of British Columbia, Vancouver, B.C., Canada, V6T 1Z2. }
\email{wei@math.cuhk.edu.hk}

\begin{abstract}
We consider the $SU(n+1)$ Toda system
$$\(S_\lambda\)\quad \left\{\begin{aligned}
& \Delta u_1+2\lambda e^{u_1}-\lambda e^{u_2}- \dots-\lambda e^{u_k}=0\quad \hbox{in}\ \Omega,\\
& \Delta u_2-\lambda e^{u_1}+2\lambda e^{u_2}- \dots-\lambda e^{u_k}=0\quad \hbox{in}\ \Omega,\\
&\vdots \hskip3truecm \ddots \hskip2truecm \vdots\\
& \Delta u_k -\lambda e^{u_1}-\lambda e^{u_2}- \dots+2\lambda e^{u_k}=0\quad \hbox{in}\ \Omega,\\
  &u_1=u_2=\dots=u_k=0\quad
\hbox{on}\ \partial\Omega.\\
\end{aligned}\right.
$$
 If  $0\in\Omega$  and $\Omega$ is symmetric with respect to the origin,    we construct a family of solutions $({u_1}_\la,\dots,{u_k}_\la)$
to $(S_\la )$ such that the $i-$th component ${u_i}_\la$ blows-up at the origin  with a mass $2^{i+1}\pi $ as $\la$ goes to zero.

 \end{abstract}
 \subjclass[2010]{35J60, 35B33, 35J25, 35J20, 35B40}

\date{\today}

\keywords{Toda system, blow-up solutions, multiple blow-up points} \maketitle

\section{Introduction}

Systems of elliptic equations in two dimensional spaces with exponential nonlinearity arise in many pure and applied disciplines such as  Physics, Geometry, Chemistry and Biology (see Chern and Wolfson \cite{CW}, Chipot, Shafrir and Wolansky \cite{CW}, Guest \cite{G} and Yang \cite{Y}).  Recently there is also considerable interest in the study of Toda-like systems, due to the importance in differential and algebraic geometry, and also mathematical physics.

We start with  the single component Liouville equation
\begin{equation}
\label{a1}
\Delta u+ \lambda e^{u}=0 \ \mbox{in} \ \Omega \subset \rr^2, \ \ u=0 \ \mbox{on} \ \partial \Omega
\end{equation}
which has been extensively studied  by many authors. Particular attention will be paid to the analysis of bubbling solutions.  Let $ (u_k, \lambda_k)$ be a bubbling sequence to (\ref{a1}), namely a family of solutions to  (\ref{a1}) with $ \lambda_k \int_{\Omega} e^{u_k} \leq C$, for some constant $C$, and $ \max_{x
 \in \Omega} u_k (x) \to +\infty$, as $k \to \infty$. Then it has been proved that all bubbles are simple, (see Brezis-Merle \cite{bm}, Nagasaki-Suzuki \cite{ns}, Li-Shafrir \cite{LS}), i.e. the local mass $\lim_{r \to 0} \lim_{k \to +\infty} \lambda_k \int_{B_r (x_k)} e^{u_k}$ equals  $ 8 \pi$ exactly. In fact in this case there is only one bubbling profile: after some rescaling, the bubble approaches to a solution of the Liouville equation
\begin{equation}
\label{a1.1}
\Delta w+e^w=0 \ \mbox{in} \ \rr^2, \int_{\rr^2} e^{w} <+\infty.
\end{equation}
\noindent
On the other hand, it is also possible to construct bubbling solutions with multiple concentrating points (see Baraket and Pacard \cite{bp}, del Pino, Kowalczyk  and Musso \cite{dkm}, Esposito, Grossi and Pistoia \cite{egp}). Degree formula has been obtained in Chen and Lin \cite{CL1,CL2}. Similar results can also be obtained when there are Dirac sources at the right hand side of (\ref{a1}) (see also  Bartolucci, Chen,  Lin and Tarantello  \cite{BCLT}).

Let us now turn to systems of Liouville type equations. In particular, we concentrate on the so-called $SU(3)$ Toda system
\begin{equation}
\label{a2}
\left\{\begin{array}{l}
\Delta u_1+ 2\lambda e^{u_1}-\lambda e^{u_2}=0 \ \mbox{in} \ \Omega, \\
\Delta u_2+ 2 \lambda e^{u_2}- \lambda e^{u_1}=0 \ \mbox{in} \ \Omega, \\
u_1=u_2 =0 \ \mbox{on} \ \partial \Omega.
\end{array}
\right.
\end{equation}

Systems of the above type (\ref{a2}) as well as its counterpart on a Riemannian surface $M$
\begin{align}\label{e104}
\left\{
\begin{array}{l}
\Delta u_1+2\rho_1(\frac{h_1e^{u_1}}{\int h_1e^{u_1}}-\frac{1}{|M|})-\rho_2(\frac{h_2e^{u_2}}{\int
 h_2e^{u_2}}-\frac{1}{|M|})=0\\
\Delta u_2+2\rho_2(\frac{h_2e^{u_2}}{\int h_2e^{u_2}}-\frac{1}{|M|})-\rho_1(\frac{h_1e^{u_1}}{\int
 h_1e^{u_1}}-\frac{1}{|M|})=0,
\end{array}
\right.
\end{align}
 arise from many different research areas in geometry and physics. In physics, it is related to
the relativistic version of non-abelian Chern-Simons models (see Dunne \cite{D}, Nolasco and Tarantello \cite{NT1}, Yang \cite{Y1}, Yang \cite{Y} and references therein). In geometry, the $SU(3)$ Toda system is closely related to
holomorphic curves (or harmonic sequence) of $M$ into $\mathbb{CP}^2$ (see   Bolton,  Jensen,   Rigoli  and  Woodward \cite{BJRW},  Chern and Wolfson \cite{CW},    Griffiths and Harris
 \cite{GH} and  Guest  \cite{G}). When $M=S^2$, it was proved that the solution
space of the $SU(3)$ Toda system is identical to the space of holomorphic curves of $S^2$ into $\mathbb{CP}^3$. We refer to  Lin,    Wei and Ye\cite{LWY} and the references therein.

For equation (\ref{a2}) or  \eqref{e104}, the first main issue is to determine the set of critical masses, i.e,  the limits of local massess
$ (\lambda_k \int_{B_r (x_k) } e^{u_{1,k}}, \lambda_k \int_{B_r (x)} e^{u_{2,k}})$
when $ u_{1,k} (x_k)=\max_{B_{r_0} (x_k)} \max (u_{1,k} (x), u_{2, k} (x)) \to +\infty $ and $ r_0>0$ is small radius.

In \cite{JLW} (see Lin,  Wei and Zhang \cite{LWZh} for another proof), Jost-Lin-Wang proved the following

\begin{thm}
\label{JLW}
Let $p_j$ be a bubbling point, i.e., $ \max_{B_{r_0} (p_j)} \max (u_{1, k} (x), u_{2, k} (x)) \to +\infty$ for some $r_0>0$. Define the local mass at $p_j$ as
 \begin{align}\label{e106}
\sigma_i(p_j)=\lim _{r\rightarrow 0}\lim _{k\rightarrow \infty} \lambda_k \int_{B_r (p_j)} e^{u_{i, k}}
\end{align}
 Then there are only
five possibility for $(\sigma_1,\sigma_2)$, i.e., $(\sigma _1,\sigma_2)$ could be one of $(4\pi ,0)$, $(0,4\pi)$, $(8\pi ,4\pi )$, $(4\pi, 8\pi)$ and $(8\pi,8\pi)$.
\end{thm}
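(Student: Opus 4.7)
The plan is to combine a rescaling/blow-up analysis at the blow-up point $p_j$ with a Pohozaev identity for the Toda system: the blow-up yields the possible bubble profiles and quantizes the local mass as a non-negative integer multiple of $4\pi$, while the Pohozaev identity cuts out an algebraic relation between $\sigma_1(p_j)$ and $\sigma_2(p_j)$. The five possibilities are then exactly the non-negative integer solutions of that relation.

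First I would set up the rescaling. Normalize so that $p_j=0$, choose $x_k\to 0$ with $M_k:=\max(u_{1,k}(x_k),u_{2,k}(x_k))\to+\infty$ and, without loss of generality, $M_k=u_{1,k}(x_k)$. Define $\varepsilon_k>0$ by $\lambda_k\varepsilon_k^{2}e^{M_k}=1$ and set $v_{i,k}(y):=u_{i,k}(x_k+\varepsilon_k y)-M_k$; the rescaled pair solves the Toda system with $\lambda$ replaced by $1$, obeys $v_{i,k}\le o(1)$, $v_{1,k}(0)=0$, and has uniformly bounded $\int e^{v_{i,k}}$ on compact sets. Applying the Brezis--Merle alternative componentwise, along a subsequence either (a) $v_{2,k}\to-\infty$ locally and $v_{1,k}$ converges to an entire finite-mass solution of the Liouville equation $\Delta v_1+2e^{v_1}=0$, which by Chen--Li has $\int e^{v_1}=4\pi$; or (b) both $v_{i,k}$ pass to a finite-mass entire solution of the $SU(3)$ Toda system on $\mathbb{R}^{2}$, which by the Jost--Wang classification has $\int e^{v_1}=\int e^{v_2}=8\pi$. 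The primary bubble therefore contributes $(4\pi,0)$, $(0,4\pi)$ or $(8\pi,8\pi)$ to the local mass at $p_j$.

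Residual concentration of the dropped component at a finer scale is then resolved by iterating the rescaling about a new centre. A $\sup+\inf$ control in the spirit of Shafrir, combined with Brezis--Merle, rules out neck energy between successive bubbles, so that $\sigma_i(p_j)=\sum_{\ell}\sigma_i^{(\ell)}$, where the sum runs over the finitely many bubbles of the three types above, and in particular each $\sigma_i(p_j)$ is a non-negative integer multiple of $4\pi$. The final algebraic ingredient is the Pohozaev identity: testing the two equations with $(x-p_j)\cdot\nabla u_{i,k}$ on a small disc, combining the resulting integrals using the Cartan structure of the coefficients, and passing to the limit as $k\to\infty$ and then $r\to 0$, yields
\begin{equation*}
\sigma_1(p_j)^{2}+\sigma_2(p_j)^{2}-\sigma_1(p_j)\sigma_2(p_j)=4\pi\bigl(\sigma_1(p_j)+\sigma_2(p_j)\bigr).
\end{equation*}

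Writing $\sigma_i(p_j)=4\pi n_i$, the Pohozaev relation becomes the Diophantine equation $n_1^{2}-n_1n_2+n_2^{2}=n_1+n_2$, whose only non-trivial non-negative integer solutions are $(1,0)$, $(0,1)$, $(2,1)$, $(1,2)$ and $(2,2)$, matching the five pairs asserted. The principal obstacle is the quantization step: without it the Pohozaev identity merely defines a one-parameter curve of admissible $(\sigma_1,\sigma_2)$, and the discreteness of the classification comes entirely from the bubble-tree analysis. For the $SU(3)$ Toda system the two components can concentrate at different scales and at different centres, so selecting the correct sequence of rescalings and controlling the neck regions require considerable care with the coupling through the off-diagonal terms $-\lambda e^{u_j}$; this is the technically hardest step of the Jost--Lin--Wang proof.
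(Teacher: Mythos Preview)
The paper does not prove this theorem. Theorem~\ref{JLW} is quoted from Jost--Lin--Wang \cite{JLW} (with an alternative proof cited as \cite{LWZh}) purely as background and motivation; no argument for it appears anywhere in the present paper. There is therefore nothing in the paper to compare your proposal against.

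Regarding the proposal on its own merits: the overall architecture---a selection/rescaling process to extract bubbles, identification of the limiting profiles via the Chen--Li and Jost--Wang classifications, and a Pohozaev identity yielding the quadratic relation
\[
\sigma_1^{2}-\sigma_1\sigma_2+\sigma_2^{2}=4\pi(\sigma_1+\sigma_2)
\]
---does match the strategy of the original \cite{JLW} proof. One point, however, is not quite faithful to that proof and is where a genuine difficulty hides. You assert, as an intermediate step, that each $\sigma_i(p_j)$ is an integer multiple of $4\pi$, obtained by summing bubble contributions after a no-neck-energy argument, and only then invoke Pohozaev once at the end. In \cite{JLW} the logic runs differently: the Pohozaev identity is applied at \emph{each} stage of the selection process, to the partial masses accumulated on a shrinking sequence of balls, and the five admissible pairs emerge from an induction that combines these successive local Pohozaev relations. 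The a priori quantization $\sigma_i\in 4\pi\mathbb{Z}_{\ge 0}$ is not established independently; indeed, your ``$\sup+\inf$'' neck control is borrowed from the scalar theory and does not transfer directly to the coupled system, precisely because of the off-diagonal terms $-\lambda e^{u_j}$ you flag at the end. So the sketch has the right ingredients but reverses the order in which they are actually deployed, and the step you label ``the principal obstacle'' is in fact circumvented in \cite{JLW} rather than proved in the form you state.
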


Unlike single equations, according to Theorem \ref{JLW}, there are five possible blow-up scenarios.  A natural question is whether or not  {\em all} these blow-up scenarios are possible. Note that if we take $u=v$, this reduces to the single Liouville equation. By the construction in \cite{dkm} or in \cite{egp},  $(8\pi, 8\pi)$ is possible for any domain.

The last blow-up scenario is called {\em fully blow-up} case. The limiting equation becomes the $SU(3)$ Toda system in $\rr^2$
\begin{equation}
\label{su3}
\left\{\begin{array}{l}
\Delta w_1+2e^{w_1}-e^{w_2}=0 \ \mbox{in} \ \rr^2, \int_{\rr^2} e^{w_1} <+\infty, \\
\Delta w_2+2e^{w_2}-e^{w_1}=0 \ \mbox{in} \ \rr^2, \int_{\rr^2} e^{w_2} <+\infty
\end{array}
\right.
\end{equation}
whose solutions are completely characterized in Jost and Wang \cite{JW} and Lin, Wei and Ye \cite{LWY}. It is known that the masses are given by
$$ \int_{\rr^2} e^{w_1}=\int_{\rr^2} e^{w_2}=8 \pi.$$

The purpose of this paper is to show that the intermediate blow-up scenario does indeed occur. Namely for $SU(3)$ Toda system (\ref{a2}) in a symmetric domain (see definition below), we shall prove the existence of blowing-up solutions with local masses $(8\pi, 4 \pi)$ and $(4 \pi, 8 \pi)$. Note that there is no uniform limiting profile as in (\ref{su3}). Instead, both $u_1$ and $u_2$ have bubbles at the same place but with different blowing up rates and different limiting profiles (see remarks below).

In fact, more generally, we consider the $SU(n+1)$ Toda system
 \begin{equation}\label{p}
\left\{\begin{aligned}
& \Delta u_1+2\lambda e^{u_1}-\lambda e^{u_2}- \dots-\lambda e^{u_k}=0\quad \hbox{in}\ \Omega,\\
& \Delta u_2-\lambda e^{u_1}+2\lambda e^{u_2}- \dots-\lambda e^{u_k}=0\quad \hbox{in}\ \Omega,\\
&\vdots \hskip3truecm \ddots \hskip3truecm \vdots\\
& \Delta u_k -\lambda e^{u_1}-\lambda e^{u_2}- \dots+2\lambda e^{u_k}=0\quad \hbox{in}\ \Omega,\\
  &u_1=u_2=\dots=u_k=0\quad
\hbox{on}\ \partial\Omega,\\
\end{aligned}\right.
\end{equation}
where   $\Omega$ is a smooth bounded domain in $\rr^2$ and
$\lambda$ is a small positive parameter.

We will assume that $\Omega$ is $k-$symmetric, i.e.
\begin{equation}\label{ksym}
x\in\Omega\quad \hbox{if and only if}\quad   \Re_k x\in\Omega,\qquad \hbox{where}\quad \Re_k:=\(\begin{matrix}\cos{\pi\over k}&\sin{\pi\over k}\\
-\sin{\pi\over k}&\cos{\pi\over k}\\ \end{matrix}\).\end{equation}

The following is the main result of this paper.

 \begin{thm}\label{main}
 Assume that $\Omega$ is a $k-$symmetric domain(see \eqref{ksym}). If $\la$ is small enough problem \eqref{p} has a   solution $(u^1_\la,\dots,u^k_\la)$ such that
 $u^i_\la(x)=u^i_\la\(  \Re_k x\)$ for any $x\in\Omega.$ Moreover, it satisfies
  \begin{equation}\label{quantix}
 \lim\limits_{\la\to0}\la\int\limits_\Omega e^{u^i_\la(x)}dx= 2^{i+1}\pi,\quad i=1,\dots,k.
 \end{equation}

 \end{thm}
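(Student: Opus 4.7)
The proof almost certainly proceeds by a Lyapunov--Schmidt finite-dimensional reduction, a method the authors have used extensively for related Liouville-type problems. The plan is to build an explicit approximate solution encoding the hierarchy of bubble masses $m_i=2^{i+1}\pi$, and then produce a genuine solution by correcting it with a small perturbation obtained from a linearized equation.

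First I would identify the correct profiles. Since the $i$-th equation formally produces, in the weak limit $\lambda\to0$, a Dirac source equal to $2m_i\delta_0-\sum_{j\neq i}m_j\delta_0$ at the origin, each component $u_i$ is forced to have a logarithmic singularity of strength determined by the whole vector $(m_1,\dots,m_k)$. Accordingly, as an ansatz I would take $k$ bubbles of singular Liouville type, schematically
$$
W_i(x)=\log\frac{C_i\,\delta_i^{\gamma_i}}{\bigl(\delta_i^{\beta_i}+|x|^{\beta_i}\bigr)^{2}},
$$
with exponents $\beta_i,\gamma_i$ and normalizing constants $C_i$ chosen so that after the scaling $y=x/\mu_i$ with $\mu_i=\mu_i(\lambda)$, $W_i$ solves a singular Liouville equation of the form $-\Delta W=|y|^{2\alpha_i}e^{W}$ with $\alpha_i$ such that $\int|y|^{2\alpha_i}e^{W}=2^{i+1}\pi$. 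The scales would be organized in a hierarchy $\mu_1,\dots,\mu_k$ with each $\log\mu_i$ a specific rational multiple of $\log\lambda$ (plus lower-order corrections involving the Robin function of $\Omega$ at the origin). The $k$-symmetry would be used to \emph{fix the concentration center at $0$}, killing the translation modes and leaving only the dilation modes as potential kernel directions.

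Next I would study the linearized operator $L_\lambda$ associated with the Toda system at $W=(W_1,\dots,W_k)$, restricted to the class of $k$-symmetric functions vanishing on $\partial\Omega$. Thanks to the non-degeneracy of the singular Liouville profiles in the radial class, the kernel of $L_\lambda$ in this subspace would be exactly $k$-dimensional, spanned by the dilation modes $z_i:=\partial W_i/\partial\log\delta_i$. Using a weighted $L^\infty$ (or $L^p$) norm adapted to the concentration rate of each bubble, I would solve the projected linear equation and, via a standard contraction argument, produce a small correction $\phi_\lambda=(\phi_1,\dots,\phi_k)$ so that $W+\phi_\lambda$ solves the original system modulo a linear combination of the $z_i$. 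The size of $\phi_\lambda$ is controlled by the size of the error $\mathcal{E}_\lambda:=-\Delta W-F(W)$ of the ansatz, which a direct computation shows to be a positive power of $\lambda$.

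Finally I would carry out the reduction: the Lagrange multipliers in front of the $z_i$ give $k$ equations in the $k$ unknowns $(\delta_1,\dots,\delta_k)$, which to leading order take the form $\nabla_\delta F_\lambda(\delta)=0$ for a reduced functional $F_\lambda$ whose leading part combines the Robin function $H(0)$ with explicit interaction integrals among consecutive-scale bubbles. A non-degenerate critical point of the leading expansion, persisting for $\lambda$ small, would yield the announced solution and the mass quantization \eqref{quantix}. The main technical difficulty I anticipate is not the abstract reduction scheme but rather the careful asymptotic analysis of the ansatz: the hierarchy of distinct scales forces the bubbles to interact across several annular regions, and one must expand $\mathcal{E}_\lambda$ in each such region and obtain matching estimates precise enough that the reduced system remains non-degenerate to leading order; this interaction analysis, together with the choice of exponents $\alpha_i$ making the limiting singular Liouville profile well-defined, is the delicate core of the construction.
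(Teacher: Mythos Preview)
Your general architecture (singular Liouville profiles with exponents tied to the masses $2^{i+1}\pi$, a hierarchy of scales $\delta_i$ determined by powers of $\lambda$, symmetry to kill translation modes, error estimates on annular regions, and a contraction argument) matches the paper closely. The one substantive divergence is the Lyapunov--Schmidt step you describe. The paper does \emph{not} carry out a finite-dimensional reduction: the dilation parameters $\delta_i$ are not left free but are fixed from the outset by explicit matching conditions (their equations (2.11)--(2.14)), chosen precisely so that the leading interaction terms in the error cancel. With the $\delta_i$'s so determined, the authors prove (Proposition~4.1) that the linearized operator $\mathcal L_\lambda$ is \emph{fully invertible} on the symmetric space, with norm bounded by $c|\ln\lambda|$. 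In other words, your expectation that ``the kernel of $L_\lambda$ in this subspace would be exactly $k$-dimensional, spanned by the dilation modes $z_i$'' is exactly what the paper rules out: the heart of their linear analysis (Step~2 of Proposition~4.1) is a delicate testing argument against $PZ_i$ and $Pw_i$ showing that the coefficients $\gamma_i$ of the would-be kernel directions must all vanish. After that, a direct contraction mapping (Proposition~5.1) produces the correction $\boldsymbol\phi_\lambda$ with no reduced problem to solve and no variational step.

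Your route---leave the $\delta_i$'s as parameters, project away from the $z_i$, and then solve a $k$-dimensional reduced system---is a legitimate alternative and is the more common template in the literature; it would presumably work here too. What the paper's approach buys is that one avoids computing the reduced functional and proving it has a nondegenerate critical point; the price is the more intricate invertibility proof, which exploits the coupling between components and the precise choice of scales. Either way, the part you correctly flag as the ``delicate core''---the multi-scale expansion of the error across the annuli $A_j$ and the determination of the exponents $\alpha_i=2^i$---is indeed where most of the work lies.
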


\begin{rmk}
{\it The symmetry condition (\ref{ksym}) is a technical condition. In the case of a general domain $\Omega$ with no symmetry, for $SU(3)$ Toda system with blow-up mass $(8 \pi, 4 \pi)$, $(4 \pi, 8 \pi)$, $(8 \pi, 8 \pi)$, some necessary conditions are needed.  For example, in the fully blowing-up case, there are six necessary conditions (see Lin,   Wei and Zhao \cite{LWZ1, LWZ2}). For our problem, in the case of a general domain with no symmetry, we expect that there should be at least four necessary conditions. }
\end{rmk}

\begin{rmk}
 
{\it 
As remarked earlier, there are no fully coupled limiting profile. For each $i=1,..., k$, after some scaling, $u_i$  has the following limiting profile
\begin{equation}
\label{1.10}
-\Delta w=|x|^{\alpha_i-2} e^w,  \ \mbox{in} \ \rr^2, \ \int_{\rr^2} |x|^{\alpha_i-2} e^w <\infty
\end{equation}
where $\alpha_i= 2^{i}$. Equation (\ref{1.10}) plays an important role in our construction. 
It is known that all solutions to (\ref{1.10}) have been classified  by Prajapat-Tarantello \cite{pt}. In fact solutions to (\ref{1.10}) are also nondegenerate--a key property that we shall use later  (see del Pino, Esposito and Musso  \cite{dem} and Lin, Wei and Ye \cite{LWY}). 
}

 \end{rmk}

\begin{rmk}
{\it
The construction we perform here is inspired by a recent result obtained by Grossi and Pistoia \cite{gp}, where they consider  the sinh-Poisson equation
\begin{equation}\label{sp}
-\Delta u=\la\sinh u\ \hbox{in}\ \Omega,\  v=0\ \hbox{on}\ \partial\Omega,\end{equation}
    $\Omega$ being a smooth  bounded domain in $\rr^2$  and
$\lambda$ being a small positive parameter. For any integer $k,$ Grossi and Pistoia \cite{gp} construct a family of solutions
to \eqref{sp} which blows-up at the origin as $\la\to0$ with positive and negative masses $4\pi k(k-1)$ and $4\pi k(k+1),$ respectively, provided    $0\in\Omega$  and $\Omega$ is symmetric with respect to the origin.
In particular, their result   gives a complete answer to an open problem formulated by Jost, Wang, Ye and Zhou in \cite{jwyz} similar to the one claimed in Theorem \ref{JLW}.
}

 \end{rmk}

\begin{rmk}
{\it In the case of $SU(3)$ Toda system, according to Lin,   Wei and Zhang \cite{LWZh}, there are two possible scenarios for the bubbling behavior $(8\pi, 4\pi)$. Theorem \ref{main} exhibits the first type. The second type is such that both $u_1$ and $u_2$  have the limiting profile (\ref{a1}). $u_1$ is the sum of two bubbles and $u_2$ has only one bubble. An open question is if the second type bubbling exists.  }
\end{rmk}

Let us comment on some recent related works. In \cite{LZ1, LZ2, LZ3}, Lin and Zhang studied general Liouville type systems with nonnegative coefficients. For Toda systems with singularities, the classification of local masses is given in Lin,   Wei and Zhang \cite{LWZh}. Sharp estimates for fully blow-up solutions for $SU(3)$ Toda system are given in  Lin,   Wei and Zhao \cite{LWZ1, LWZ2}. See also related studies by Malchiodi-Ndiaye \cite{MN}, Ohtsuka and  Suzuki \cite{OS}. As far as we know, Theorem \ref{main} seems to be the first existence result on  bubbling solutions to the $SU(3)$ Toda system.

\medskip

\noindent
{\bf Acknowledgment.}  Monica Musso
has been partly supported by Fondecyt Grant 1120151 and
CAPDE-Anillo ACT-125, Chile. Angela Pistoia has been supported by   ``Accordi Interuniversitari di Collaborazione Culturale e Scientifica Internazionale, A.F. 2012 between Universit\'a La Sapienza Roma and
  Pontificia Universidad Catolica de Chile".
Juncheng Wei was supported by a GRF grant from RGC of Hong Kong. We thank Professors Chang-Shou Lin and Lei Zhang for their interests in this work.

\section{The ansatz}\label{uno}

Let $\al\ge2.$ Let us introduce the functions
\begin{equation}\label{walfa}
w^\al_\de(x):=\ln 2\al^2{\de^\al\over\(\de^\al+|x|^\al\)^2}\quad
x\in\rr^2,\ \de>0
\end{equation}
which solve the singular
Liouville problem
\begin{equation}\label{plim}
-\Delta w=|x|^{\al-2}e^w\quad \hbox{in}\quad \rr^2,\qquad
\int\limits_{\rr^2} |x|^{\al-2}e^{w(x)}dx<+\infty.
\end{equation}

Functions $ w^\al_\de$ with suitable choices of $(\al, \de)$ constitute the main terms in the bubbling profiles of $u_i$.

Let us introduce the projection  $P  u$ of  a function $u$
into $H^1_0(\Omega),$ i.e.
\begin{equation}\label{pro}
 \Delta P u=\Delta u\quad \hbox{in}\ \Omega,\qquad  P u=0\quad \hbox{on}\ \partial\Omega.
\end{equation}

  We look for a  solution
to \eqref{p} as
\begin{equation}\label{ans}
\mathbf{u}_\la:=\({u_1}_\la,\dots,{u_k}_\la\)=\mathbf W_\la+ \boldsymbol{\phi}_\la,\ \end{equation}
with $\mathbf W_\la(x) :=\( W^1_\la,\dots, W^k_\la\)$ and $\boldsymbol\phi:=\({\phi_1}_\la,\dots,{\phi_k}_\la\)$.

Here for any $i=1,\dots,k$
\begin{equation}\label{answ}
 W^i_\la(x):= P w_i(x)-{1\over2}\sum \limits_{j=1\atop j\not=i}^kP w_i(x) , \ w_i(x):=w^{\al_i}_{\de_i}(x) \end{equation}
where \begin{equation}\label{alfa}
 \al_i=2^i
\end{equation}
and the concentration parameters satisfy
\begin{equation}\label{delta}
\de_i:=d_i \la^{2^{k-i}\over \al_i}=d_i \la^{2^{k-2i} }  \quad \hbox{for some}\ d_i>0.
\end{equation}
Let us point out that from (\ref{delta}) the following relations hold
\begin{equation}\label{delta0}
{\de_i\over \de_{i+1}}={d_i\over d_{i+1}} \la^{{3\over4}2^{k-2i} }  \quad \hbox{for}\ i=1,\dots,k-1.
\end{equation}
The rest term $\boldsymbol\phi\in  \mathbf{H}_k$ where (see \eqref{ksym})
\begin{equation}\label{hek}\mathbf{H}_k:=H_e ^k\quad \hbox{with}\ H_e:=\left\{\phi\in  \mathrm{H}^1_0(\Omega)\ :\ \phi(x)=\phi(\Re_kx)\ \hbox{for any}\ x\in\Omega \right\}.\end{equation}

\medskip

The choice of $\de_i$'s and $\al_i$'s    is motivated by the need for the interaction among bubbles to be small.
Indeed, an important feature is that each bubble interacts with   the other one and  in general the interaction is not negligible!
The interaction will be measured in Lemma \ref{errore} using the function $\boldsymbol\Theta:=(\Theta_1,\dots,\Theta_k)$ defined as
\begin{equation}\label{tetaj}
\Theta_j(y):=\(Pw_j-w_j-{1\over2}\sum\limits_{i=1\atop i\not=j}Pw_i\)(\de_j y)-(\al_j-2)\ln|\de_j
y| + \ln(2\la),\ j=1,\dots,k.\end{equation}
The choice of parameters $\al_j$ and $\de_j$   made in \eqref{alfa} and   \eqref{delta}   ensures that $\Theta_j$ is small.
 \medskip
In order to estimate $\Theta_j$ we need to
  introduce the  sets
  \begin{equation}\label{anelli}
 A_i:=\left\{x\in\Omega\ :\ \sqrt {\de_i\de_{i-1}}\le  |x|\le\sqrt {\de_i\de_{i+1}}\right\}  ,\ i=1,\dots,k,\end{equation}
where we set $\de_0:=0$ and $\de_{k+1}:=+\infty.$

We point out that if $j,\ell=1,\dots,k$
$${A_j\over\de_\ell}=\left\{y \in{\Omega\over\de_\ell}\ :\ {\sqrt {\de_{j-1}\de_j}\over\de_\ell}\le |y|\le{\sqrt {\de_j\de_{j+1}}\over\de_\ell}\right\}$$
and so {\em roughly speaking}
 ${A_j\over\de_\ell}$ shrinks to the origin if $\ell>j,$ ${A_j\over\de_j}$ invades the whole space $\rr^2$  and
 $ {A_j\over\de_\ell}$ runs off to infinity if $\ell<j.$

 More precisely, in order to have $\Theta_j$ small in Lemma \ref{teta} we will need to   choose   $\de_j$'s and $\al_j$'s so that
\begin{equation}\label{alfai}
-(\al_j-2)+ \sum\limits^k_{i=1\atop i<j}   \al_i=0
\end{equation}
and
\begin{equation}\label{deltai}
-  \al_j\ln\de_j +\sum\limits^k_{i=1\atop i>j}    \al_i
\ln\de_i-\ln(2\al_j^2) +h_j(0)-{1\over2}\sum\limits_{i=1\atop i\not=j}^k h_i(0)+\ln2\la=0,
\end{equation}
where we agree that if $j=1$ or $j=k$ the sum over the indices $i<j$ or $i>j$ is zero, respectively.
Here $h_i(x):=4\pi\al_i H(x,0).$
Moreover,
\begin{equation}\label{green}
G(x,y)={1\over 2\pi}\ln{1\over |x-y|}+H(x,y),\quad x,y\in\Omega
\end{equation}
is the Green's function of the Dirichlet Laplacian in $\Omega$ and $H(x,y)$ is its regular part.

By \eqref{alfai} we immediately deduce
\begin{equation}\label{alfai2}
  \al_1=2\quad\hbox{and}\quad \al_{j+1}=2\al_j\ \hbox{if}\ j=1,\dots,k-1
\end{equation}
and therefore \eqref{alfa}. Moreover,   by \eqref{deltai} we immediately
deduce that
\begin{equation}\label{delta2}\de_k^{\al_k} ={e^{h_k(0)-{1\over2}\sum\limits_{i=1\atop i\not=k}^k h_i(0)}\over \al_k^2}\la  \end{equation}
and
\begin{equation}\label{delta3}\de_{j }^{\al _{j }}={e^{h_j(0)-{1\over2}\sum\limits_{i=1\atop i\not=j}^k h_i(0)}\over \al_j^2}\de_{j+1}^{\al_{j+1}}\dots\de_k^{\al_k}\la\ \hbox{for}\ j=1,\dots,k-1,\end{equation}
which implies \eqref{delta}.

   By the maximum principle we easily deduce that
\begin{lemma}\label{pwi-lem}
\begin{align}\label{pro-exp}
 P  w_i(x)=& w_i(x)-\ln\(2\al_i^2\de_i^{\al_i}\)+h_i(x)+O\(\de_i^{\al_i}\)\nonumber\\ =&-2\ln\(\de_i^{\al_i}+|x|^{\al_i}\)+h_i(x)+O\(\de_i^{\al_i}\)
\end{align}
and for any $i,j=1,\dots,k$
\begin{equation}\label{pwi}
  Pw_i(\de_j y)=\left\{\begin{aligned}
  &-2\al_i\ln\(\de_j|y|\) +h_i(0)\\
  &\qquad +O\({1\over|y|^{\al_i}}\({\de_i\over\de_j}\)^{\al_i}\)+O\(\de_j|y|\)+O\(\de_i^{\al_i}\)&\ \hbox{if}\ i<j,\\
  & \\
& - 2\al_i\ln\de_i  -2\ln(1+|y|^{\al_i})+h_i(0)\\
  &\qquad+O\(\de_i|y|\)+O\(\de_i^{\al_i}\)&\ \hbox{if}\ i=j,\\
  & \\
&-2\al_i \ln\de_i+h_i(0)\\
  &\qquad+O\({ |y|^{\al_i}}\({\de_j\over\de_i}\)^{\al_i}\)+O\(\de_j|y|\)+O\(\de_i^{\al_i}\)&\ \hbox{if}\ i>j.\\
 \end{aligned}\right.\end{equation}
 Here $h_i(x):=4\pi\al_i H(x,0).$
\end{lemma}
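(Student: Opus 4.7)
The plan is to prove the global expansion \eqref{pro-exp} first using the maximum principle, and then specialize to the scaled variable $x=\delta_j y$ to obtain \eqref{pwi} as a case analysis.

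First I would introduce the auxiliary function
\[
\psi_i(x):=Pw_i(x)-w_i(x)+\ln\(2\al_i^2\de_i^{\al_i}\)-h_i(x).
\]
Since $h_i(x)=4\pi\al_i H(x,0)$ is harmonic in $\Omega$ and $\Delta Pw_i=\Delta w_i$ by definition of the projection \eqref{pro}, we have $\Delta \psi_i=0$ in $\Omega$. On $\partial\Omega$, the point $0$ is interior so $|x|\ge c>0$, and expanding the explicit formula \eqref{walfa} gives
\[
w_i(x)=\ln\(2\al_i^2\de_i^{\al_i}\)-2\al_i\ln|x|+O\(\de_i^{\al_i}\),
\]
uniformly for $x\in\partial\Omega$. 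Using the Green's identity \eqref{green} and $G(\cdot,0)=0$ on $\partial\Omega$, one has $H(x,0)=\tfrac{1}{2\pi}\ln|x|$ on $\partial\Omega$, hence $h_i(x)=2\al_i\ln|x|$ there. Combining these on $\partial\Omega$ yields $\psi_i(x)=O(\de_i^{\al_i})$, and the maximum principle propagates this bound to all of $\Omega$, establishing \eqref{pro-exp}.

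Next, for \eqref{pwi} I would substitute $x=\de_j y$ into the first line of \eqref{pro-exp}, obtaining
\[
Pw_i(\de_j y)=-2\ln\(\de_i^{\al_i}+(\de_j|y|)^{\al_i}\)+h_i(\de_j y)+O\(\de_i^{\al_i}\).
\]
Relation \eqref{delta0} yields the strict ordering $\de_1\ll\de_2\ll\dots\ll\de_k$ as $\la\to 0$, so the logarithm splits into three regimes depending on which of $\de_i^{\al_i}$ or $(\de_j|y|)^{\al_i}$ dominates. When $i<j$ the second term dominates and factoring it out produces $-2\al_i\ln(\de_j|y|)$ plus the advertised error $O\bigl(|y|^{-\al_i}(\de_i/\de_j)^{\al_i}\bigr)$; when $i=j$ the two terms are comparable and one factors out $\de_i^{\al_i}$ to get $-2\al_i\ln\de_i-2\ln(1+|y|^{\al_i})$; when $i>j$ the first term dominates and one extracts $-2\al_i\ln\de_i$ with residue $O\bigl(|y|^{\al_i}(\de_j/\de_i)^{\al_i}\bigr)$. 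In all three regimes a Taylor expansion of the smooth function $h_i$ gives $h_i(\de_j y)=h_i(0)+O(\de_j|y|)$, producing the last error term.

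The steps are conceptually clean, and no real analytic obstacle appears: the only point requiring care is verifying that the errors kept in each regime of \eqref{pwi} genuinely bound the neglected part of $-2\ln(\de_i^{\al_i}+(\de_j|y|)^{\al_i})$ uniformly on the range of $y$ that will be relevant later (for instance on annuli of the form $A_\ell/\de_j$). This bookkeeping is routine, and I would handle it by noting $\ln(1+t)=O(t)$ for $t$ bounded, and then checking that the specific combinations $(\de_i/\de_j)^{\al_i}|y|^{-\al_i}$ and $(\de_j/\de_i)^{\al_i}|y|^{\al_i}$ given in the statement correctly track the leading correction in the respective cases.
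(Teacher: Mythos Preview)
Your proposal is correct and follows exactly the route the paper indicates: the paper simply states that the lemma follows ``by the maximum principle'' without spelling out the auxiliary harmonic function or the boundary computation, and your argument supplies precisely those details. The case analysis for \eqref{pwi} via the factorisations of $-2\ln(\de_i^{\al_i}+(\de_j|y|)^{\al_i})$ and the Taylor expansion of $h_i$ is the intended one, and your remark that the $O$-bounds are to be read on the annuli $A_\ell/\de_j$ where they are later applied is the right caveat.
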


Now, we are in position to prove the following crucial estimates.

\begin{lemma}\label{teta} Assume \eqref{alfa} and \eqref{delta}.
If    $j=1,\dots,k$ we have

\begin{equation}\label{er4}
\left|\Theta_j(y)\right| =O\(\delta_j|y|  +\la^{3\over2^k}\)\quad\hbox{for any}\  {y\in {A_j\over \de_j} }
 \end{equation}
and in particular
\begin{equation}\label{er4.1}
\sup\limits_{y\in {A_j\over \de_j} }\left|\Theta_j(y)\right| =O(1). \end{equation}

\end{lemma}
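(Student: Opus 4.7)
The plan is to plug the expansions \eqref{pwi} of Lemma~\ref{pwi-lem} directly into the definition \eqref{tetaj} of $\Theta_j$. The structural conditions \eqref{alfai} and \eqref{deltai} were set up precisely to annihilate the non-decaying contributions, so the bulk of the work lies in bookkeeping the remainders.

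First I would split $\sum_{i\neq j} Pw_i(\de_j y)$ into the two subsums $i<j$ and $i>j$ and apply the corresponding cases of \eqref{pwi}, combined with the explicit identity $w_j(\de_j y)=\ln(2\al_j^2)-\al_j\ln\de_j-2\ln(1+|y|^{\al_j})$ which follows from \eqref{walfa}. The two occurrences of $-2\ln(1+|y|^{\al_j})$ (one from $Pw_j(\de_j y)$ in the case $i=j$, one from $w_j(\de_j y)$) cancel inside $Pw_j-w_j$, and the coefficient of $\ln(\de_j|y|)$ becomes $\sum_{i<j}\al_i-(\al_j-2)$, which vanishes by \eqref{alfai}. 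Collecting the remaining constants produces exactly
\begin{equation*}
-\al_j\ln\de_j+\sum_{i>j}\al_i\ln\de_i-\ln(2\al_j^2)+h_j(0)-\tfrac12\sum_{i\neq j}h_i(0)+\ln(2\la),
\end{equation*}
which is the left-hand side of \eqref{deltai} and hence equals zero.

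What survives is a sum of three families of error terms inherited from \eqref{pwi}: Taylor remainders $O(\de_j|y|)$ from expanding each regular part $h_i$ at $0$, directly yielding the first summand of \eqref{er4}; contributions $O(\de_i^{\al_i})=O(\la^{2^{k-i}})$, which are at most $C\la^{3/2^k}$ for every $i\le k$ and $k\ge 2$; and the $|y|$-dependent remainders $(\de_i/\de_j)^{\al_i}|y|^{-\al_i}$ for $i<j$ and $(\de_j/\de_i)^{\al_i}|y|^{\al_i}$ for $i>j$. For the last two families I would use the constraint $\sqrt{\de_{j-1}/\de_j}\le|y|\le\sqrt{\de_{j+1}/\de_j}$ coming from the definition \eqref{anelli} of $A_j$, which eliminates $|y|$ in favour of ratios of consecutive $\de$'s. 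Plugging in \eqref{delta0} and checking monotonicity in $|i-j|$, the worst case is that of adjacent indices $|i-j|=1$ evaluated at the nearer endpoint of the annulus, and a direct computation there yields a bound of order $\la^{3/4}\le\la^{3/2^k}$.

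Finally, \eqref{er4.1} is immediate from \eqref{er4}, since $\de_j|y|\le\sqrt{\de_j\de_{j+1}}=o(1)$ for $j<k$ and $\de_j|y|\le\mathrm{diam}(\Omega)$ for $j=k$. The only mildly technical point is the third step: one has to verify, uniformly in $i\neq j$, that the combined behaviour of $|y|^{\pm\al_i}$ on $A_j/\de_j$ and of $(\de_i/\de_j)^{\pm\al_i}$ is dominated by a positive power of $\la$. The arithmetic relations \eqref{alfa} and \eqref{delta} for $\al_i$ and $\de_i$ are precisely what makes this check succeed for all $k$.
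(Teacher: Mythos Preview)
Your proposal is correct and follows essentially the same route as the paper: expand $\Theta_j$ via Lemma~\ref{pwi-lem}, kill the logarithmic and constant parts using \eqref{alfai} and \eqref{deltai}, and then control the three families of remainders on $A_j/\de_j$. The only difference is in the bookkeeping of the cross terms $(\de_i/\de_j)^{\pm\al_i}|y|^{\mp\al_i}$: the paper first crudely replaces $\de_i$ by $\de_{j\mp1}$ to reduce everything to $(\de_{j\mp1}/\de_j)^{\al_i/2}$, whose worst case over $i,j$ is $\la^{3/2^k}$, whereas you keep the exact expression, correctly identify the worst case as adjacent indices $|i-j|=1$, and obtain the sharper bound $\la^{3/4}$ before observing $\la^{3/4}\le\la^{3/2^k}$ for $k\ge2$. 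Both arguments are valid; yours is slightly tighter but otherwise identical in structure.
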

\begin{proof}

By Lemma \ref{pwi-lem}  (also using the mean value theorem $h_j \(\de_j|y|\)=h_j(0)+O\(\de_j|y|\)$), by \eqref{alfai} and by \eqref{deltai} we
deduce
\begin{align*}
  \Theta_j(y)   = &  \[ -\al_j\ln\de_j -\ln(2\al_j^2)+h_j(0)+O\(\de_j|y|\)+O\(\de_j^{\al_j}\)\]-(\al_j-2)\ln|\de_j y| \nonumber
\\   &
-{1\over2}\sum\limits_{i< j}  \[-2\al_i\ln\(\de_j|y|\)
+h_i(0)+O\({1\over|y|^{\al_i}}\({\de_i\over\de_j}\)^{\al_i}\)+O\(\de_j|y|\)+O\(\de_i^{\al_i}\)\]
\nonumber
\\   &-{1\over2}\sum\limits_{i> j} \[-2\al_i \ln\de_i+h_i(0)+O\({ |y|^{\al_i}}\({\de_j\over\de_i}\)^{\al_i}\)+O\(\de_j|y|\)+O\(\de_i^{\al_i}\)\]\nonumber
\\   &+\ln2\la \nonumber
\\
 = & \underbrace{ \[ -  \al_j\ln\de_j +\sum\limits_{i> j}   \al_i \ln\de_i-\ln(2\al_j^2)+h_j(0)-{1\over2}\sum\limits_{i=1\atop  i\not=j}^k h_i(0)+\ln2\la\]}_{=\ 0\ \hbox{because of}\ \eqref{deltai}}
 \nonumber
\\   &+\underbrace{\[-(\al_j-2)+ \sum\limits_{i< j}   \al_i\]}_{=\ 0\ \hbox{because of}\ \eqref{alfai}}\ln\(\de_j|y|\)\nonumber
\\  &+O\(\de_j|y|\)+\sum\limits_{i= 1}^kO\(\de_i^{\al_i}\)+\sum\limits_{i< j}O\({1\over|y|^{\al_i}}\({\de_i\over\de_j}\)^{\al_i}\)
+\sum\limits_{i> j}  O\({
|y|^{\al_i}}\({\de_j\over\de_i}\)^{\al_i}\) \nonumber
\\   = & O\(\de_j|y|\)+\sum\limits_{i= 1}^kO\(\de_i^{\al_i}\)+\sum\limits_{i< j}O\({1\over|y|^{\al_i}}\({\de_i\over\de_j}\)^{\al_i}\)
+\sum\limits_{i> j}  O\({
|y|^{\al_i}}\({\de_j\over\de_i}\)^{\al_i}\).
\end{align*}

 By \eqref{delta3} we  deduce that
$$ O\(\de_i^{\al_i}\)= O\(\la\)\ \hbox{because}\ 1\le i\le k.$$
Moreover,   if $y\in {A_j\over \de_j}$ then $\sqrt
{\de_{j-1}\over\de_j}\le |y|\le \sqrt {\de_{j+1}\over\de_j}$
and so
 if $j=2,\dots,k$ and $i<j$ we have
\begin{align*}
 O\({1\over|y|^{\al_i}}\({\de_i\over\de_j}\)^{\al_i}\)=&O\( \({\de_i^2\over\de_{j-1}\de_j}\)^
{\al_i\over2}\)=O\( \({\de_{j-1}\over\de_j}\)^{\al_i\over2}\)= O\(\la^{{3\over2}2^{k-2j+i}}\) =O\(\la^{3\over2^k}\),\end{align*}
 and if $j=1,\dots,k-1$ and $i>j$ we have
 \begin{align*}
O\({ |y|^{\al_i}}\({\de_j\over\de_i}\)^{\al_i}\)= &O\(
\({\de_{j+1}\de_j\over\de_i^2}\)^{\al_i\over2}\)= O\(
\({\de_j\over\de_{j+1}}\)^{\al_i\over2}\)=  O\(\la^{{3\over8}2^{k-2j+i}}\)=O\(\la^{3\over2}\). \end{align*}
We used \eqref{delta0} and \eqref{alfa}.
 Collecting all the previous estimates, we get \eqref{er4}.

 Estimate \eqref{er4.1} follows immediately by \eqref{er4}, because if $y \in {A_j\over \de_j}$ then
 $\delta_j |y|=O(1).$
\end{proof}

In the following, we will denote by
$$\|u\|_p:=\(\int\limits_\Omega |u(x)|^pdx\)^{1\over p}\quad \hbox{and}\quad\|u\| :=\(\int\limits_\Omega |\nabla u(x)|^2dx\)^{1\over 2}$$
the usual norms in the Banach spaces $\mathrm{L}^p(\Omega)$ and $\mathrm{H}^1_0(\Omega),$ respectively.
We also denote by $\mathbf u:=(u_1,\dots,u_k)\in\(\mathrm{H}^1_0(\Omega)\)^k$ and we set
$$\|\mathbf u\|_p=\sum\limits_{i=1}^k\|u_i\|_p\quad \hbox{and}\quad\|\mathbf u\| =\sum\limits_{i=1}^k\|u_i\|.$$

\section{Estimate of the error term}\label{due}

In this section we will estimate the following error term

\begin{align}\label{rla}
& \mathcal{R}_\la(x):=   \( R^1 _\la(x),\dots, R^k _\la(x) \),\
 x\in\Omega,\quad \hbox{where}\nonumber\\
 &R^j _\la(x):=-\Delta W^j_\la(x)-2\la e^{ W^j_\la(x)} +\la \sum\limits_{i=1\atop i\not=j}^ke^{ W^i_\la(x)} , \ j=1,\dots,k.
\end{align}

 \begin{lemma}\label{errore} Let $  \mathcal{R}_\la$ as in \eqref{rla}.
There exists $p_0>1$ and $\la_0>0$ such that for any $p\in(1,p_0 )$ and $\la\in(0,\la_0)$ we have
$$\| \mathcal{R}_\la\|_{p}= O\(\la^{{1\over2^k}{2-p\over p}}\).$$
\end{lemma}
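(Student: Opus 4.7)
My plan is to first convert $\mathcal{R}_\la$ into a form that exposes the role of the interaction functions $\Theta_j$. Exponentiating definition \eqref{tetaj} yields the pointwise identity
\begin{equation*}
2\la e^{W^i_\la(x)} = |x|^{\al_i-2}e^{w_i(x)}e^{\Theta_i(x/\de_i)}, \qquad x\in\Omega,\ i=1,\dots,k,
\end{equation*}
while the projection \eqref{pro} combined with the singular Liouville equation \eqref{plim} gives $-\Delta W^j_\la = |x|^{\al_j-2}e^{w_j} - \tfrac{1}{2}\sum_{i\neq j}|x|^{\al_i-2}e^{w_i}$. Substituting into \eqref{rla} produces the structural identity
\begin{equation*}
R^j_\la(x) = |x|^{\al_j-2}e^{w_j}\bigl[1-e^{\Theta_j(x/\de_j)}\bigr] + \tfrac{1}{2}\sum_{i\neq j}|x|^{\al_i-2}e^{w_i}\bigl[e^{\Theta_i(x/\de_i)}-1\bigr]
\end{equation*}
on which the whole estimate will rest: on each annulus $A_\ell$ only the term indexed by $i=\ell$ is eligible for the smallness of $\Theta$ provided by Lemma~\ref{teta}.

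I would then partition $\Omega = \bigcup_{\ell=1}^k A_\ell$ and bound $\|R^j_\la\|_{L^p(A_\ell)}$ for every pair $(j,\ell)$ via the change of variable $x=\de_\ell y$, under which $y$ ranges over $A_\ell/\de_\ell$ where Lemma~\ref{teta} applies. For the diagonal piece $i=\ell$, since $|\Theta_\ell|$ is uniformly bounded I can use $|1-e^{\Theta_\ell}|\leq C|\Theta_\ell| = O(\de_\ell|y|+\la^{3/2^k})$ together with the explicit rescaled profile $e^{w_\ell(\de_\ell y)} = 2\al_\ell^2[\de_\ell^{\al_\ell}(1+|y|^{\al_\ell})^2]^{-1}$; the integrand $|y|^{p(\al_\ell-2)}(1+|y|^{\al_\ell})^{-2p}$ and its variant with an extra $|y|^p$ factor are both uniformly integrable on $\rr^2$ as soon as $p>1$ is close enough to $1$, which produces a bound of shape $C\,\de_\ell^{2(1-p)}(\de_\ell^p+\la^{3p/2^k})$. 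For the off-diagonal pieces $i\neq \ell$ I would sidestep the need to control $\Theta_i$ outside its good region by rewriting $|x|^{\al_i-2}e^{w_i}[e^{\Theta_i(x/\de_i)}-1] = 2\la e^{W^i_\la}-|x|^{\al_i-2}e^{w_i}$ and bounding both summands pointwise: the Liouville profile $|x|^{\al_i-2}e^{w_i}$ is peaked at scale $\de_i\neq \de_\ell$, decaying on $A_\ell$ like $\de_i^{\al_i}/|x|^{\al_i+2}$ when $i<\ell$ and like $|x|^{\al_i-2}/\de_i^{\al_i}$ when $i>\ell$, while the parallel control of $\la e^{W^i_\la}$ on $A_\ell$ follows directly from the asymptotics of $Pw_i$ in Lemma~\ref{pwi-lem}.

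Finally I would collect the contributions. Substituting $\de_j=d_j\la^{2^{k-2j}}$ converts every summand into a genuine power of $\la$; the slowest-decaying piece turns out to come from the diagonal contribution on $A_k$, whose $\de_\ell|y|$-component gives $\de_k^{2-p}\sim\la^{(2-p)/2^k}$ matching exactly the target exponent, while its $\la^{3/2^k}$-component yields the strictly faster $\la^{(p+2)/2^k}$. All other diagonal and off-diagonal contributions translate via \eqref{delta0} and \eqref{alfa} into strictly larger powers of $\la$, so defining $p_0 > 1$ as the supremum of those $p$ for which every such exponent remains at least $(2-p)/2^k$ gives the claim after taking $p$-th roots. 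The main technical obstacle will be precisely this bookkeeping---tracking the exponents across all triples $(j,i,\ell)$ and verifying that the tightest case is indeed the diagonal one on $A_k$---and the calibration \eqref{alfai}--\eqref{deltai} of the $\al_j$'s and $\de_j$'s is exactly what makes the diagonal cancellation in $\Theta_\ell$ just strong enough to dominate the off-diagonal smallness.
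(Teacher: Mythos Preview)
Your proposal is correct and follows essentially the same route as the paper: both express $R^j_\la$ as a linear combination of the differences $|x|^{\al_i-2}e^{w_i}-2\la e^{W^i_\la}$, split $\Omega=\bigcup_\ell A_\ell$, use the smallness of $\Theta_\ell$ from Lemma~\ref{teta} on the matched annulus, and bound the Liouville profile and $\la e^{W^i_\la}$ separately on the mismatched annuli, with the dominant contribution $\de_k^{2-p}\sim\la^{(2-p)/2^k}$ coming from the diagonal term on $A_k$. The only caveat is that the phrase ``follows directly from the asymptotics of $Pw_i$ in Lemma~\ref{pwi-lem}'' hides nontrivial bookkeeping for the off-diagonal bound on $\la e^{W^i_\la}$ (the paper expands the product $\prod_{\ell\ne j}(\de_\ell^{\al_\ell}+|x|^{\al_\ell})$ and treats several sub-cases over roughly a page), but you explicitly flag this as the main technical obstacle, so the plan is sound.
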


\begin{proof}
 We will show that if  $p$ is close enough to 1
 \begin{align}\label{er1}
 &\left\| R^i _\la\right\|_p=O\(\la^{{1\over2^k}{2-p\over p}  }\),\ i=1,\dots,k .
\end{align}
The claim will follow.
By \eqref{answ}  we have
 \begin{align}\label{er1.1}
   R^i _\la &=-\Delta W^i_\la-2\la e^{ W^i_\la}  +\la \sum\limits_{j=1\atop j\not=i}^ke^ {W^j_\la} \nonumber\\
 &= -\Delta \(Pw_i-{1\over2}\sum\limits_{j=1\atop j\not=i }^kPw_j\)-2\la e^{Pw_i-{1\over2}\sum\limits_{j=1\atop j\not=i }^kPw_j}+\la \sum\limits_{j=1\atop j\not=i} ^k e^{Pw_j-{1\over2}\sum\limits_{\ell=1\atop \ell\not=j}^kPw_\ell} \nonumber\\
 &=|x|^{\alpha_i-2}e^{w_i(x)}-{1\over2}\sum\limits_{j=1\atop j\not=i}^k|x|^{\alpha_j-2}e^{w_j(x)}-2\la e^{Pw_i-{1\over2}\sum\limits_{j=1\atop j\not=i}^kPw_j}+\la \sum\limits_{j=1\atop j\not=i} ^ke^{Pw_j-{1\over2}\sum\limits_{\ell=1\atop \ell\not=j}^k Pw_\ell}    \end{align}

Therefore, by  \eqref{tetaj} we get
 \begin{align}\label{er1.2}
&\int\limits_\Omega| {R_i}_\la(x)|^pdx\  =O\( \sum\limits_{j=1 }^k\int\limits_{\Omega}\left||x|^{\alpha_j-2}e^{w_j(x)} -2\la e^{Pw_j-{1\over2}\sum\limits_{\ell\not=j}Pw_\ell}  \right|^pdx\) \nonumber
\\ &= O\(\sum\limits_{j=1}^k\int\limits_{A_j}\left||x|^{\alpha_j-2}e^{w_j(x)} - 2\la e^{Pw_j-{1\over2}\sum\limits_{\ell\not=j}Pw_\ell}  \right|^pdx\)
\nonumber
\\ &+
 O\(\sum\limits_{j=1}^k\sum\limits_{r =1\atop r\not=j}^k\int\limits_{A_r}\left||x|^{\alpha_j-2}e^{w_j(x)} \right|^pdx\)
+ O\(\sum\limits_{j=1}^k\sum\limits_{r =1\atop r\not=j}^k\int\limits_{A_r}\left|\la e^{Pw_j(x)-{1\over2}\sum\limits_{\ell\not=j}Pw_\ell   (x)} \right|^pdx\)
 \end{align}

\medskip
Let us estimate the first term in \eqref{er1.2}, which gives the rate of $\left\| R^i _\la\right\|_p$. For any $j=1,\dots,k$ we have
\begin{align}\label{er1.3}
&\int\limits_{A_j}\left||x|^{\alpha_j-2}e^{w_j(x)} -2 \la e^{Pw_j-{1\over2}\sum\limits_{\ell\not=j}Pw_\ell}  \right|^pdx\ (\hbox{we use \eqref{tetaj}})\nonumber
\\
&   =\int\limits_{A_j}\left||x|^{\alpha_j-2}e^{w_j(x)}\[1-  e^{\Theta_j\({x/\de_j}\)}\]\right|^pdx \ (\hbox{we set $x=\de_jy$})\nonumber
\\
&   = \de_j^{2-2p}\int\limits_{A_j\over \de_j}
\frac{|y|^{(\al_j-2)p}}{\(1+|y|^{\al_j}\)^{2p}} \left|1-
e^{\Theta_j\(y\)}
\right|^pdy \  \hbox{(we use that $e^t-1=e^{\eta t}t$  for some
$\eta\in(0,1) $ and  Lemma \ref{teta})}\nonumber
\\ &
=O\(\de_j^{2-2p}\int\limits_{A_j\over \de_j}\frac{|y|^{(\al_j-2)p}}{\(1+|y|^{\al_j}\)^{2p}}\left|
 \Theta_j(y)\right|^pdy\)=\nonumber
\\ &  =O\(\de_j^{2-2p}\int\limits_{A_j\over \de_j}\frac{|y|^{(\al_j-2)p}}{\(1+|y|^{\al_j}\)^{2p}}\left|
 \delta_j |y|+\lambda^{3\over2^k}\right|^pdy\)\  \hbox{(we use that   \eqref{delta})}\nonumber \\ &=
O\(\de_j^{2-2p}\lambda^{{3\over2^k}p}\)+O\(\de_j^{2- p}\)  \  \hbox{(we use that $ \de_1\le\de_j\le\de_k$)}\nonumber \\ & =
O\(\la^{2^{k-2}(1-p)+{3\over2^k}p}\)+O\(\la^{ {1\over2^k}(2-p) }\)=O\(\la^{ {1\over2^k}(2-p) }\).
  \end{align}

  \medskip
   Let us estimate the second term in  \eqref{er1.2}. For any $j=1,\dots,k$ and $r\not=j$ we have

\begin{align}\label{er6}
      &\int\limits_{A_r}\left|{ |x|^ {\alpha_j-2}\over(\de_j^{\al_j}+|x|^{\al_j})^2}\right|^pdx\quad  \hbox{(we scale $x=\delta_j y$)}\nonumber\\
  &=C\de_j^{2-2p} \int\limits_{{\sqrt{\de_{r-1}\de_r}\over \de_j}\le |y|\le {\sqrt{\de_r\de_{r+1}}\over \de_j}}
  {|y|^{(\al_j-2)p}\over \(1+|y|^{\al_j}\)^{2p}} dy\nonumber \\
   &=
  \left\{
  \begin{aligned}
&  O\(\de_j^{2-2p}\({ {\sqrt{\de_r\de_{r+1}}}\over \de_j^2}\)^{(\al_j-2)p+2}\)=
O\( \de_j^{2-2p} \({\de_r\over\de_{r+1}}\)^{(\al_j-2)p+2\over2} \)\\ &\qquad \hbox{if}\  r =1,\dots,k-1\ \hbox{and}\ j>r,\\
& \\
  &O\(\de_j^{2-2p}\({ \de_j\over\sqrt{\de_{r-1}\de_r} }\)^{-(\al_j+2)p+2}\)=O\(\de_j^{2-2p}\({  \de_{r-1}\over \de_r }\)
  ^{(\al_j+2)p-2\over2}\)\\ &\qquad \hbox{if}\  r=2,\dots,k \ \hbox{and}\  j<r.\\
  \end{aligned}
  \right.
  \nonumber\\
  & \quad\hbox{(we use \eqref{delta} and \eqref{delta0})}\nonumber\\ &  =
  \left\{
 \begin{aligned}
&  O\(\de_1^{2-2p}\( {\de_{j-1} \over \de_j }\)^{{\al_j\over2}-(p-1) }\)=
O\( \la^{-2^{k-1}(p-1)} \la ^{{3\over4} 2^{k-2j+2}\(2^{j-1}-(p-1)\)}\)\\ &\qquad \hbox{if}\  r =1,\dots,k-1\ \hbox{and}\ j>r,\\
& \\
  & O\(\de_1^{2-2p}\( {\de_{k-1} \over \de_k }\)^{{\al_j\over2}+(p-1) }\)
  =O\( \la^{-2^{k-1}(p-1)} \la ^{{3\over  2^k}\(2^{j-1}+(p-1)\)}   \)\\
  &\qquad \hbox{if}\  r=2,\dots,k \ \hbox{and}\  j<r.\\
  \end{aligned}  \right.
  \nonumber\\
  & \quad\hbox{(we compare with \eqref{er1.3})}\nonumber\\ &
=   o\(\la^{ {1 \over2^k} {(2-p) }}\),
  \end{align}
for some provided $p$ is close enough to 1.

\medskip
Let us estimate the third term in \eqref{er1.2}. For any $j=1,\dots,k$ and $r\not=j$,
\begin{align}\label{er5}
&  \int\limits_{A_r}\left|\la e^{ Pw_j(x)-{1\over2}\sum\limits_{\ell=1\atop \ell\not= j}^k P{w_\ell(x)}}\right|^pdx
\quad \hbox{(we apply \eqref{pro-exp})}\nonumber\\
& = O\(  \la^{ p}  \int\limits_{A_r}\left| {1\over(\de_j^{\al_j}+|x|^{\al_j})^2}\prod \limits_{\ell\not=j}^k {(\de_\ell^{\al_\ell}+|x|^{\al_\ell})}\right|^pdx\) \end{align}

Firstly, we consider the case  $k=2.$ We have only to estimate
\begin{align}\label{er5.1}
&       \int\limits_{A_1 }\left|  {\de_1^{\al_1}+|x|^{\al_1}\over(\de_2^{\al_2}+|x|^{\al_2})^2} \right|^pdx
\quad \hbox{and}\
      \int\limits_{A_2}\left|  {\de_2^{\al_2}+|x|^{\al_2}\over(\de_1^{\al_1}+|x|^{\al_1})^2} \right|^pdx  \end{align}
with  $\al_1=2,$ $\al_2=4 $ and $\de_1\sim\la,$ $\de_2\sim\la^{1\over4}.$
Therefore, we have
\begin{align}\label{er5.11}
&       \int\limits_{A_1}\left|  {\de_1^{\al_1}+|x|^{\al_1}\over(\de_2^{\al_2}+|x|^{\al_2})^2} \right|^pdx
=\int\limits_{\{x\in\Omega\ :\ |x|\le\sqrt{\de_1\de_2} \}}\left|  {\de_1^2+|x|^2\over(\de_2^4+|x|^4)^2} \right|^pdx\nonumber\\
&=O\(\int\limits_{\{  |y|\le\sqrt{\de_1\over\de_2} \}}\de_2^{2-8p}\de_1^{2p} {1  \over(1+|y|^4)^2p}dy \)+
O\(\int\limits_{\{ |y|\le\sqrt{\de_1\over\de_2} \}}\de_2^{2-6p} {  |y|^{2p} \over(1+|y|^4)^2}  dy\) \nonumber\\
&=O\(\de_2^{1-8p}\de_1^{2p+1}\)+O\(\de_2^{1-7p}\de_1^{p+1}\)=O\(\la^{5\over4}\) +O\(\la^{{5\over4}-{3\over4}p}\) \end{align}
and
\begin{align}\label{er5.12}
&       \int\limits_{A_2}\left|  {\de_2^{\al_2}+|x|^{\al_2}\over(\de_1^{\al_1}+|x|^{\al_1})^2} \right|^pdx = \int\limits_{\{x\in\Omega\ :\ |x|\ge\sqrt{\de_1\de_2}\}}\left|  {\de_2^4+|x|^4\over(\de_1^2+|x|^2)^2} \right|^pdx \nonumber\\
&=O\(\int\limits_{\{  |y|\ge\sqrt{\de_2\over\de_1} \}}\de_1^{2-4p}\de_2^{4p} {1  \over(1+|y|^2)^{2p}}dy \)+
O\(\int\limits_{\Omega}     dy\)\nonumber\\
&=O\(\de_2^{1+2p}\de_1^{1-2p}\)+O\(1\)=O\(\la^ {5\over4}\)+O\(1\)\end{align}
By \eqref{er5.11} and \eqref{er5.12}, we can compare   \eqref{er5}   with \eqref{er1.3} and we get
\begin{align}\label{er5.13}
&  \int\limits_{A_r}\left|\la e^{ Pw_r(x)-{1\over2}\sum\limits_{\ell=1\atop \ell\not= j}^k P{w_\ell(x)}}\right|^pdx
  = o\(\la^{ {1\over4}(2-p)}\)
  \end{align}
  provided $p$ is close enough to 1.

Now, let us consider the general case.
We estimate \eqref{er5} when $p=1.$ We have to estimate the following terms
\begin{align}\label{er5.14}
&  \la\ \de_1^{\al_1}\cdots\de_{j-1}^{\al_{j-1}}\de_{j+1}^{\al_{j+1}}\cdots\de_k ^{\al_k}\int\limits_{A_r}  {1\over(\de_j^{\al_j}+|x|^{\al_j})^2} dx =O(\la) \\\label{er5.15}
& \la \int\limits_{A_r}  {|x|^{\al_1+\cdots+\al_{j-1}+\al_{j+1}+\cdots+\al_k}\over(\de_j^{\al_j}+|x|^{\al_j})^2} dx = O\( \la^{{3\over2^k} (2^{k-1}-1)}\)\\\label{er5.16}
& \la\ \de_{\sigma_ {h+1}}^{\al_{\sigma_ {h+1}}}\cdots\de_{ \sigma_ {k-1}}^{\al_{\sigma_ {k-1}}} \int\limits_{A_r}  {|x|^{\al_{\sigma_1}+\cdots+ \al_{\sigma_ {h}}}\over(\de_j^{\al_j}+|x|^{\al_j})^2} dx=o(\la)
\end{align}
where $\{\sigma_1,\dots,\sigma_{k-1}\}$ is a permutation of the indices $\{1,\dots,j-1,j+1,\dots,k\}.$

Let us estimate \eqref{er5.14}.
\begin{align*}
&  \de_1^{\al_1}\cdots\de_{j-1}^{\al_{j-1}}\de_{j+1}^{\al_{j+1}}\cdots\de_k ^{\al_k}\int\limits_{A_r}  {1\over(\de_j^{\al_j}+|x|^{\al_j})^2} dx \quad  \hbox{(we scale $x=\de_jy$)} \nonumber\\
& = \de_1^{\al_1}\cdots\de_{j-1}^{\al_{j-1}}\de_{j+1}^{\al_{j+1}}\cdots\de_k ^{\al_k}\de_j^{2-2\al_j}\int\limits_{\left\{{\sqrt{\de_{r-1}\de_{r}}\over\de_j}\le|y|\le{\sqrt{\de_r\de_{r+1}}\over\de_j}\right\}}  {1\over(1+|y|^{\al_j})^2} dy  \nonumber\\
&=\left\{\begin{aligned}
&O\(\de_1^{\al_1}\cdots\de_{j-1}^{\al_{j-1}}\de_{j }^{2-\al_{j }}\la\)\ \hbox{if $j<k$ (because of \eqref{delta3})}\\
&O\({\de_1^{\al_1} }\de_2^{\al_2}\cdots\de_{j-1}^{\al_{j-1}}\de_{j+1}^{\al_{j+1}}\cdots\de_{k-1} ^{\al_{k-1}}\de_k^{2-\al_k }\)\ \hbox{if $j=k$ }\\
\end{aligned}\right.
\nonumber\\&=\left\{\begin{aligned}
&O\({\de_1^{\al_1}\over\de_j^{\al_j}}\cdots\de_{j-1}^{\al_{j-1}}\de_{j }^{2 }\la\)\ \hbox{if $j<k$  }\\
&O\({\de_1^{\al_1}\over\de_k^{\al_k}} \cdots\de_{j-1}^{\al_{j-1}}\de_{j+1}^{\al_{j+1}}\cdots\de_{k-1} ^{\al_{k-1}}\de_k^{2 }\)\ \hbox{if $j=k$ }\\
\end{aligned}\right.
\nonumber\\
&=O(1).\end{align*}
Therefore   \eqref{er5.14} follows.

Let us estimate \eqref{er5.15}. We immediately get
\begin{align}\label{er5.151}
 &  \int\limits_{A_r}  {|x|^{\al_1+\cdots+\al_{j-1}+\al_{j+1}+\cdots+\al_k}\over(\de_j^{\al_j}+|x|^{\al_j})^2} dx =O(1) \quad \hbox{if $j<k$},\end{align}
because the function is integrable at the origin, since
\begin{equation}\label{er5.100}
\al_1+\cdots+ \al_k=\sum\limits_{i=1}^k2^i=2(2^k-1)
\end{equation}
which implies
$$\al_1+\cdots+\al_{j-1}+\al_{j+1}+\cdots+\al_k+2-2\al_j= 2^j(2^{k+1-j}-3) >0.$$
If $j=k$ we scale $x=\de_ky$ and we get
\begin{align}\label{er5.152}
 &  \int\limits_{A_r}  {|x|^{\al_1+\cdots+\al_{j-1}+\al_{j+1}+\cdots+\al_k}\over(\de_j^{\al_j}+|x|^{\al_j})^2} dx \nonumber\\
 &=O\(\de_k^{\al_1+\cdots +\al_{k-1}+2-2\al_k}\int\limits_{\left\{{\sqrt{\de_{r-1}\de_{r}}\over\de_k}\le|y|\le{\sqrt{\de_r\de_{r+1}}\over\de_k}\right\}}  {|y|^{\al_1+\cdots +\al_{k-1}}\over(1+|y|^{\al_k})^2} dy\)\nonumber\\
 &=\(\de_k^{-2^k}\({\de_{k-1}\over\de_k}\)^{\al_1+\cdots +\al_{k-1}}\)\nonumber\\
 &=\({1\over\la}\la^{{3\over2^k} (2^{k-1}-1)}\), \hbox{because of \eqref{delta} and \eqref{delta0}}.\end{align}
By \eqref{er5.151} and \eqref{er5.152} we deduce \eqref{er5.15}

Let us estimate \eqref{er5.16}. It is useful to point out that $2\al_j=\al_{j+1}.$ Therefore, it is clear that
\begin{align}\label{er5.161}
 &\de_{\sigma_ {h+1}}^{\al_{\sigma_ {h+1}}}\cdots\de_{ \sigma_ {k-1}}^{\al_{\sigma_ {k-1}}} \int\limits_{A_r}  {|x|^{\al_{\sigma_1}+\cdots+ \al_{\sigma_ {h}}}\over(\de_j^{\al_j}+|x|^{\al_j})^2} dx=O(1)\\ &  \hbox{if $\sigma_i\ge j+1$ for some   $i=1,\dots,h $ or $\al_{\sigma_1}+\cdots+ \al_{\sigma_ {h}} +2-2\al_j\ge0.$}\nonumber
 \end{align}
It remains to consider the case $\sigma_i\le j-1$ for any   $i=1,\dots,h $ and $\al_{\sigma_1}+\cdots+ \al_{\sigma_ {h}} +2-2\al_j<0.$

In particular, it means that the set of indices $\{\sigma_{h+1},\dots,\sigma_{k-1}\}$ has to contain  a permutation of the indices $\{ j+1,\dots,k\}.$
Then,   we can write \eqref{er5.16} as
\begin{align}\label{er5.162}
 &\de_{j+1}^{\al_{j+1}}\cdots\de_k^{\al_k}\de_{\sigma_ {h+1}}^{\al_{\sigma_ {h+1}}}\cdots\de_{ \sigma_ {j-1}}^{\al_{\sigma_ {j-1}}} \int\limits_{A_r}  {|x|^{\al_{\sigma_1}+\cdots+ \al_{\sigma_ {h}}}\over(\de_j^{\al_j}+|x|^{\al_j})^2} dx\quad  \hbox{(we scale $x=\de_jy$)}   \nonumber\\
 &=\de_{j+1}^{\al_{j+1}}\cdots\de_k^{\al_k}\de_{\sigma_ {h+1}}^{\al_{\sigma_ {h+1}}}\cdots\de_{ \sigma_ {j-1}}^{\al_{\sigma_ {j-1}}}
  \de_j^{2-2\al_j+\al_{\sigma_ { 1}}+\cdots+\al_{\sigma_ { h}}} \int\limits_{\left\{{\sqrt{\de_{r-1}\de_{r}}\over\de_j}\le|y|\le{\sqrt{\de_r\de_{r+1}}\over\de_j}\right\}}   {|y|^{\al_{\sigma_1}+\cdots+ \al_{\sigma_ {h}}}\over(1+|y|^{\al_j})^2} dy\nonumber\\
 &= \de_{\sigma_ {h+1}}^{\al_{\sigma_ {h+1}}}\cdots\de_{ \sigma_ {j-1}}^{\al_{\sigma_ {j-1}}}
  \de_j^{2- \al_j+\al_{\sigma_ { 1}}+\cdots+\al_{\sigma_ { h}}} \int\limits_{\left\{{\sqrt{\de_{r-1}\de_{r}}\over\de_j}\le|y|\le{\sqrt{\de_r\de_{r+1}}\over\de_j}\right\}}   {|y|^{\al_{\sigma_1}+\cdots+ \al_{\sigma_ {h}}}\over(1+|y|^{\al_j})^2} dy\quad  \hbox{(because of \eqref{delta3})}\nonumber\\
  &=O\(
  \de_j^{2- \al_j+\al_{\sigma_ { 1}}+\cdots+\al_{\sigma_ { j-1}}} \int\limits_{\left\{{\sqrt{\de_{r-1}\de_{r}}\over\de_j}\le|y|\le{\sqrt{\de_r\de_{r+1}}\over\de_j}\right\}}   {|y|^{\al_{\sigma_1}+\cdots+ \al_{\sigma_ {h}}}\over(1+|y|^{\al_j})^2} dy\)\quad  \hbox{(because $\de_{\sigma_i}\le\de_j$)}\nonumber\\
  &=
  O\(\int\limits_{\left\{{\sqrt{\de_{r-1}\de_{r}}\over\de_j}\le|y|\le{\sqrt{\de_r\de_{r+1}}\over\de_j}\right\}}   {|y|^{\al_{\sigma_1}+\cdots+ \al_{\sigma_ {h}}}\over(1+|y|^{\al_j})^2} dy\)\nonumber\\
  &\quad  \hbox{(because $ \al_{\sigma_ { 1}}+\cdots+\al_{\sigma_ { j-1}}=\al_1+\dots+\al_{j-1}=2(2^{j-1}-1)=\al_j-2$)}\nonumber\\
 &=\left\{\begin{aligned}
&O\(\({\de_{j-1} \over\de_j } \)^{{\al_{\sigma_ { 1}}+\cdots+\al_{\sigma_ { h}}}+2\over2}\)\ \hbox{if $r<j$  }\\
&O\(\({\de_j \over\de_{j+1} } \)^{-(\al_{\sigma_ { 1}}+\cdots+\al_{\sigma_ { h}})-2+2\al_j\over2}\)\ \hbox{if $r>j$  }\\
\end{aligned}\right.
\nonumber\\
&=o(1).\end{align}
By \eqref{er5.161} and \eqref{er5.162} we deduce \eqref{er5.16}.

It is clear that if c$p$ is close enough to 1, by \eqref{er5}, \eqref{er5.14}, \eqref{er5.15} and \eqref{er5.16}
we can compare \eqref{er5} with \eqref{er1.3} and we  get
\begin{align}\label{er5.0}
&  \int\limits_{A_r}\left|\la e^{ Pw_j(x)-{1\over2}\sum\limits_{\ell=1\atop \ell\not= j}^k P{w_\ell(x)}}\right|^pdx
 = o\(   \la^{ {1\over2^k}(2-p)} \)\quad \hbox{if}\ r\not=j. \end{align}

 \medskip
 Finally, \eqref{er1} follows by \eqref{er1.2}, \eqref{er1.3}, \eqref{er6} and \eqref{er5.0}.
 That concludes the proof.
 \end{proof}

\section{The linear theory}\label{tre}

Let us consider the linear operator
\begin{align}\label{lla}&\mathcal{L}_{\la}(\boldsymbol\phi ):=\(L^1_\la(\boldsymbol\phi),\dots,L^k_\la(\boldsymbol\phi)\),\ \hbox{where}
\nonumber\\ &
L^i_\la(\boldsymbol\phi):=-\Delta \phi^i  - |x|^{\al_i-2}e^{w_i(x)}\phi^i   +{1\over2}\sum\limits_{j=1\atop j\not=i} |x|^{\al_j-2}e^{w_j(x)}\phi^j , \ j=1,\dots,k.
\end{align}

Let us   study  the invertibility of  the linearized operator $\mathcal{L}_{\la}.$

\begin{prop}\label{inv}
For any $p>1$ there exists $\la_0>0$ and $c>0$ such that for any $\la \in(0, \la_0)$ and for any $\boldsymbol\psi\in \(\mathrm{L}^{p}(\Omega)\)^k$ there exists a unique
$\boldsymbol\phi\in \(\mathrm{W}^{2, 2}(\Omega)\)^k\cap  \mathbf{H}_k$ solution of
$$ \mathcal{L}_{\la}(\boldsymbol\phi )=\boldsymbol\psi\ \hbox{in}\ \Omega,\ \boldsymbol\phi=0\ \hbox{on}\ \partial\Omega,
$$
which satisfies $$\|\boldsymbol\phi\| \leq c |\ln\la|  \|\boldsymbol\psi\|_{p}.$$
\end{prop}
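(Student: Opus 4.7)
The plan is a contradiction argument combined with a blow-up analysis at each of the concentration scales $\de_1 \ll \de_2 \ll \cdots \ll \de_k$, followed by the Fredholm alternative for existence.

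Suppose the a priori estimate fails. Then there exist sequences $\la_n \downarrow 0$, $\boldsymbol\psi_n \in (\mathrm L^p(\Omega))^k$, and $\boldsymbol\phi_n \in \mathbf{H}_k \cap (\mathrm W^{2,p}(\Omega))^k$ with $\mathcal L_{\la_n}\boldsymbol\phi_n = \boldsymbol\psi_n$, $\|\boldsymbol\phi_n\| = 1$, and $|\ln\la_n|\,\|\boldsymbol\psi_n\|_p \to 0$. Trudinger's inequality combined with the pointwise bounds on the potentials $|x|^{\al_i-2}e^{w_i(x)}$ coming from Lemma \ref{pwi-lem} yields $\|\phi^i_n\|_{\mathrm L^q(\Omega)} \le C_q$ for every $q<\infty$. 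For each ordered pair $(i,j)$ introduce the rescaled function $\tilde\phi^i_{j,n}(y) := \phi^i_n(\de_{j,n} y)$ on $\Omega/\de_{j,n}$. By the scale decoupling already used in Lemma \ref{teta} and Lemma \ref{errore}, at scale $\de_{j,n}$ only the $j$-th potential contributes at order one while those of index $\ell\neq j$ become negligible on compact sets. Standard elliptic regularity then produces a subsequential $C^1_{\mathrm{loc}}$-limit $\tilde\phi^i_{j,\infty}$: for $i=j$ it satisfies
\begin{equation*}
-\Delta \tilde\phi^j_{j,\infty} = \frac{2\al_j^2\,|y|^{\al_j-2}}{(1+|y|^{\al_j})^2}\,\tilde\phi^j_{j,\infty} \qquad\text{in } \rr^2,
\end{equation*}
with at most logarithmic growth; for $i\neq j$ it is bounded and harmonic on $\rr^2$, hence constant, and an outer-region matching forces this constant to vanish.

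By the classification/nondegeneracy result stated in Remark 1.3 (Prajapat--Tarantello \cite{pt}, del Pino--Esposito--Musso \cite{dem}, Lin--Wei--Ye \cite{LWY}), the bounded solutions of the displayed limiting equation form a finite-dimensional space spanned by the dilation mode $Z_0^j(y) = \al_j\frac{1-|y|^{\al_j}}{1+|y|^{\al_j}}$ together with rotation modes that break the $\Re_k$-symmetry and are therefore killed in $\mathbf{H}_k$. Thus $\tilde\phi^j_{j,\infty} = c_j\,Z_0^j$ for some $c_j\in\rr$. To eliminate $c_j$, I test the $j$-th equation of $\mathcal L_{\la_n}\boldsymbol\phi_n = \boldsymbol\psi_n$ against the projection $PZ_0^j \in H^1_0(\Omega)$: since $Z_0^j$ is the exact kernel element of the rescaled operator, the leading-order integrals on both sides cancel and the next-order expansion, using Lemma \ref{pwi-lem} and the logarithmic correction $PZ_0^j(x) = Z_0^j(x/\de_j) - 2\al_j\ln\de_j + O(1)$, produces an identity of the form $(B+o(1))\,c_j = O(\|\psi^j_n\|_p)$ with $B\neq 0$, which forces $c_j=0$.

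Once every $c_j=0$, the rescaled functions tend to zero on compact sets, and the Green's function representation
\begin{equation*}
\phi^i_n(x) = \int_\Omega G(x,y)\!\left[|y|^{\al_i-2}e^{w_i(y)}\phi^i_n(y) - \tfrac12\!\sum_{\ell\neq i}\!|y|^{\al_\ell-2}e^{w_\ell(y)}\phi^\ell_n(y) + \psi^i_n(y)\right]\!dy
\end{equation*}
together with dominated convergence on the supports of the potentials yields $\boldsymbol\phi_n\to 0$ strongly in $\mathbf{H}_k$, contradicting $\|\boldsymbol\phi_n\|=1$. This proves the a priori estimate, hence injectivity. Surjectivity follows from the Fredholm alternative since $\mathcal L_\la$ is a compact perturbation of $-\Delta$ on $\mathbf{H}_k$. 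The principal obstacle is the test-function step: one must carry out the expansion of $\int \mathcal L_{\la_n}(\boldsymbol\phi_n)\cdot PZ_0^j - \int\boldsymbol\psi_n\cdot PZ_0^j$ precisely enough to produce the $|\ln\la|$ coming from the boundary correction in $PZ_0^j$, which is exactly what matches the $|\ln\la|$ factor in the statement of the proposition.
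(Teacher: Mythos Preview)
Your overall architecture (contradiction, blow-up at each scale, symmetry-reduced nondegeneracy, test against projected kernel elements, Fredholm alternative) matches the paper's. The gap is in the step where you kill the coefficients $c_j$.

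First, the expansion you write for $PZ_0^j$ is wrong. Since $Z_0^j(y)=\al_j\frac{1-|y|^{\al_j}}{1+|y|^{\al_j}}$ is \emph{bounded}, the maximum principle gives $PZ_0^j(x)=Z_0^j(x/\de_j)+1+O(\de_j^{\al_j})$; there is no $-2\al_j\ln\de_j$ term. You are importing the logarithm that belongs to $Pw_j$, not to $PZ_0^j$.

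Second, and more seriously, testing only against $PZ_0^j$ cannot produce a relation $(B+o(1))c_j=O(\|\psi^j_n\|_p)$ with $B\neq 0$. After the cancellation you correctly anticipate, what survives is essentially $\int_{\Omega/\de_j} 2\al_j^2\frac{|y|^{\al_j-2}}{(1+|y|^{\al_j})^2}\tilde\phi_j(y)\,dy$, and since $\tilde\phi_j\to c_j\,\frac{1-|y|^{\al_j}}{1+|y|^{\al_j}}$ while $\int_{\rr^2}\frac{|y|^{\al-2}(1-|y|^{\al})}{(1+|y|^{\al})^3}\,dy=0$, the limit of this integral is zero \emph{for every} $c_j$. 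So this test gives you no information on $c_j$ at leading order. What the paper does instead is weight this identity by $\ln\la$ and use it, together with the coupling terms (which at scale $\de_j$ pick up contributions from $\tilde\phi_\ell$, $\ell<j$), to obtain a triangular system forcing certain ``$\ln\la$-scaled masses'' $\sigma_i$ to vanish. Only \emph{after} knowing $\sigma_i=0$ does a \emph{second} test, this time against $Pw_i$ (which \emph{does} carry the logarithm $-2\al_i\ln\de_i$), yield a nontrivial linear system $\gamma_k=0$, $\gamma_i-\sum_{j>i}\gamma_j=0$, whence all $c_j=\gamma_j=0$. Your proposal collapses these two distinct tests into one and attributes to $PZ_0^j$ the logarithmic behaviour of $Pw_j$; as written, the step does not close.
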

\begin{proof}

We argue by contradiction. Assume there exist $p>1,$ sequences $\la_n\to0,$ $\boldsymbol\psi_n:=\(\psi^1_n,\dots,\psi^k_n\)\in \(\mathrm{L}^{\infty}(\Omega)\)^k$ and $\boldsymbol\phi_n:=\(\phi^1_n,\dots,\phi^k_n\)\in  \(\mathrm{W}^{2, 2}(\Omega)\)^k\cap  \mathbf{H}_k$
such that for any $i=1,\dots,k$
\begin{equation}\label{inv1}
-\Delta \phi^i _ n- |x|^{\al_i-2}e^{w_i(x)}\phi^i _ n +{1\over2}\sum\limits_{j=1\atop j\not=i} |x|^{\al_j-2}e^{w_j(x)}\phi^j_ n=\psi^i_n,\ \hbox{in}\ \Omega,\ \phi^i_n=0\ \hbox{on}\ \partial\Omega,
\end{equation}
with ${\de_1}_n ,\dots,{\de_k}_n  $   defined as in  \eqref{delta} and
\begin{equation}\label{inv2}
\|\boldsymbol\phi_n\| =\sum\limits_{i=1}^k \| \phi^i_n\|=1\quad\hbox{and}\quad    |\ln\la_n|  \|\boldsymbol\psi_n\|_p= |\ln\la_n|  \sum\limits_{i=1}^k \| \psi^i_n\|_p\to0.\end{equation}

For sake of simplicity, in the following we will omit the index $n$ in all the sequences and we write $\phi_i=\phi^i_n,$ $\psi_i=\psi^i_n$.
For any $i=1,\dots,k$ we define
 $ \tilde \phi_i(y):=\phi_i\({\de_i} y\) $ with $y\in \Omega_i:={\Omega\over \de_i}.$

{\em Step 1: we will show that
\begin{equation}\label{step1}  \tilde \phi_i (y)\to \gamma_i {1-|y|^{\al_i}\over1+|y|^{\al_i}}\ \hbox{  for some $ \gamma_i\in\rr $} \end{equation}
weakly in $\mathrm{H}_{\al_i}(\rr^2)$ and strongly in $\mathrm{L}_{\al_i}(\rr^2) $ (see \eqref{hjs} and \eqref{hjs}).}

First of all we claim  that
\begin{equation}\label{ut0}
\int\limits_\Omega |x|^{\al_i-2}e^{w_i(x)} \phi_i^2(x)dx=O(1)\ \hbox{for any $i=1,\dots,k.$}
\end{equation}

Indeed, we write    \eqref{inv1} for two functions $\phi_i$ and $\phi_\ell$ with $i\not=\ell $
\begin{align}\label{ut1}
&-\Delta \phi_i  - |x|^{\al_i-2}e^{w_i(x)}\phi_i   +{1\over2}\sum\limits_{j=1\atop j\not=i} |x|^{\al_j-2}e^{w_j(x)}\phi_j =\psi_i ,\ \hbox{in}\ \Omega,\ \phi_i =0\ \hbox{on}\ \partial\Omega,\\
&-\Delta \phi_\ell  - |x|^{\al_\ell-2}e^{w_\ell(x)}\phi_\ell   +{1\over2}\sum\limits_{j=1\atop j\not=\ell} |x|^{\al_j-2}e^{w_j(x)}\phi_j =\psi_\ell ,\ \hbox{in}\ \Omega,\ \phi^\ell=0\ \hbox{on}\ \partial\Omega,\\
\end{align}
then we subtract the two equations
\begin{align*}
&-\Delta \(\phi_i-\phi_\ell \)  - {3\over 2}\(|x|^{\al_i-2}e^{w_i(x)}\phi_i-   |x|^{\al_\ell-2}e^{w_\ell(x)}\phi_\ell\) = \psi_i-\psi_\ell ,\ \hbox{in}\ \Omega,\ \phi_i-\phi_\ell=0\ \hbox{on}\ \partial\Omega,\\
\end{align*}
  we multiply by $\phi_i ,$    we use \eqref{inv2} and we deduce
\begin{align*}
&    \int\limits_\Omega |x|^{\al_i-2}e^{w_i(x)} \phi_i^2(x)dx=  \int\limits_\Omega |x|^{\al_\ell-2}e^{w_\ell(x)} \phi_\ell (x)\phi_i (x)dx+O(1),\end{align*}
which implies (by summing over the index $\ell$)
\begin{align}\label{ut2}
& (k-1)   \int\limits_\Omega |x|^{\al_i-2}e^{w_i(x)} \phi_i^2(x)dx= \sum\limits_{\ell=1\atop\ell\not =i}^k \int\limits_\Omega |x|^{\al_\ell-2}e^{w_\ell(x)} \phi_\ell (x)\phi_i (x)dx+O(1). \end{align}
On the other hand, if we multiply the first equation \eqref{ut1} by $\phi_i $ and we use \eqref{inv2}, we get
\begin{align}\label{ut3}
&   \int\limits_\Omega |x|^{\al_i-2}e^{w_i(x)} \phi_i^2(x)dx= {1\over2}\sum\limits_{j=1\atop j\not =i}^k \int\limits_\Omega |x|^{\al_j-2}e^{w_j(x)} \phi_j(x)\phi_i (x)dx+O(1)\\\nonumber \\&={k-1\over2}   \int\limits_\Omega |x|^{\al_i-2}e^{w_i(x)} \phi_i^2(x)dx+O(1), \end{align}
where the last equality follows by \eqref{ut2}.
By \eqref{ut3} we immediately deduce \eqref{ut0} when $k\not=3.$
When $k=3$ we have to argue in a different way. For any index $i$, we write the   equation  \eqref{inv1} as
\begin{align}\label{ut4}
&-\Delta \phi_i   -{3\over2} |x|^{\al_i-2}e^{w_i(x)}\phi_i   +{1\over2} \sum\limits_{j=1}^3   |x|^{\al_j-j}e^{w_j(x)}\phi_j   =\psi_i ,\ \hbox{in}\ \Omega,\ \phi_i  =0\ \hbox{on}\ \partial\Omega ,
\end{align}
and we sum over the index $i=1,2,3$, so we get
$$-\Delta\(\phi_1+\phi_2+\phi_3\)=\psi_1+\psi_2+\psi_3 ,\ \hbox{in}\ \Omega,\ \phi_1+\phi_2+\phi_3 =0\ \hbox{on}\ \partial\Omega.$$
Since $\|\psi_1+\psi_2+\psi_3\|_p=o(1) $ (because of \eqref{inv2}), the standard regularity implies that
$\|\phi_1+\phi_2+\phi_3\|_\infty=o(1).$ Now, we multiply   equation in \eqref{ut4} by $\phi_i$ and we sum over the index $i=1,2,3$, so we obtain (using \eqref{inv2})
\begin{align*}
 {3\over2} \sum\limits_{i=1}^3\int\limits_\Omega |x|^{\al_i-2}e^{w_i(x)}\phi_i ^2(x)dx&=1+{1\over2} \sum\limits_{j=1}^3\int\limits_\Omega   |x|^{\al_j-j}e^{w_j(x)}\phi_j(x)
(\phi_1+\phi_2+\phi_3)(x)dx\\ &\le 1+ {1\over2}\|\phi_1+\phi_2+\phi_3\|_\infty \sum\limits_{j=1}^3\int\limits_\Omega   |x|^{\al_j-j}e^{w_j(x)}|\phi_j(x)|
 dx\\ &\le 1+ {1\over2}\|\phi_1+\phi_2+\phi_3\|_\infty \sum\limits_{j=1}^3\(\int\limits_\Omega   |x|^{\al_j-j}e^{w_j(x)}|\phi_j(x)|^2\)^{1/2},
 dx\end{align*}
 which implies
$$\sum\limits_{i=1}^3\int\limits_\Omega |x|^{\al_i-2}e^{w_i(x)}\phi_i ^2(x)dx=O(1),$$
because $\|\phi_1+\phi_2+\phi_3\|_\infty=o(1).$ That proves \eqref{ut0} when $k=3.$

Now, by \eqref{ut0} we deduce that each $\phi_i$ is bounded in the space $\mathrm{H}_{\al_i}(\rr^2)$
defined in \eqref{hjs}. Indeed, if we scale   we get
$$\int\limits_{\Omega_j}|\nabla\tilde\phi _j(y)|^2dy=\de^2_j\int\limits_{\Omega_j}|\nabla\tilde\phi_j (\de_j y)|^2dy=\int\limits_{\Omega }|\nabla\phi_j (x)|^2dx=1 $$
and
 $$\int\limits_{\Omega^j}2\al_j^2 {|y|^{\al_j-2}\over(1+|y|^{\al_j})^2}\(\tilde \phi_j(y)\)^2dy=\int\limits_{\Omega }2\al_j^2 {\de_j^{\al_j}|x|^{\al_j-2}\over(\de_j^{\al_j}+|x|^{\al_j})^2} \phi_j^2 (x)dx .$$

Finally,  by Proposition \eqref{compact} we can assume that (up to a subsequence)
  $\tilde\phi_j \rightharpoonup \tilde\phi^*_j $ weakly in $\mathrm{H}_{\al_j}(\rr^2)$ and strongly in $\mathrm{L}_{\al_j}(\rr^2).$

Now, by \eqref{inv1} we deduce that
each function $ \tilde \phi_j $ solves the problem
\begin{equation}\label{s2.3}
 -\Delta    \tilde\phi_j  =2\alpha_j^2{|y|^{\alpha_j-2}\over (1+|y|^{\alpha_j})^2}   \tilde\phi_j +\rho_j(y)\ \hbox{in}\ \Omega^j ,\  \tilde\phi_j =0\ \hbox{on}\ \partial\Omega_j ,
\end{equation}
where
$$\rho_j(y):={1\over2}\sum\limits_{i=1\atop i\not=j}^k2\alpha_i^2{ {\de_i}  ^{\alpha_i}{\de_j}  ^{\alpha_j}|y|^{\alpha_i-2}\over ({\de_i}  ^{\alpha_i}+{\de_j}  ^{\alpha_i}|y|^{\alpha_i})^2} \phi_i(\de_j y)+\de_j^2\psi_j(\de_j y).$$

Now, let $\varphi\in C^\infty_0(\rr^2)$ be a given function and let $\mathcal{K}$ its support. It is clear that if $n$ is large enough
$$\mathcal{K}\subset {A_j\over\de_j}=\left\{y\in \Omega^j\ :\ \sqrt{\de_{j-1}\over\de_j}\le |y|\le \sqrt{\de_{j+1}\over\de_j}\right\},$$
where $A_j$ is the annulus defined in \eqref{anelli}.
We multiply equation \eqref{s2.3} by $\varphi$ and we get
\begin{align*}
&\int\limits_{\mathcal{K}}\nabla  \tilde\phi_j   (y)\nabla\varphi(y) dy-\int\limits_{\mathcal{K}}2\alpha_j^2{|y|^{\alpha_j-2}\over (1+|y|^{\alpha_j})^2}    \tilde\phi_j  (y)\varphi(y)  dy=\int\limits_{\mathcal{K}} \rho_j(y)\varphi(y)dy.\end{align*}
Therefore, passing to the limit we get
\begin{align}\label{phi0}
&\int\limits_{\mathcal{K}}\nabla\tilde \phi_j^*(y)\nabla\varphi(y) dy-\int\limits_{\mathcal{K}}2\alpha_j^2{|y|^{\alpha_j-2}\over (1+|y|^{\alpha_j})^2} \tilde \phi_j^*(y)\varphi(y)  dy=0\ \forall\ \varphi\in C^\infty_0(\rr^2),\end{align}
because
\begin{align*}
&
\sum\limits_{i=1\atop i\not=j}^k\int\limits_{\mathcal{K}} 2\alpha_i^2{ {\de_i}  ^{\alpha_i}{\de_j}  ^{\alpha_j}|y|^{\alpha_i-2}\over ({\de_i}  ^{\alpha_i}+{\de_j}  ^{\alpha_j}|y|^{\alpha_i})^2}  \phi_i(\de_j y)\varphi(y)  dy\\ &=O\(\sum\limits_{i=1\atop i\not=j}^k\int\limits_{{A_j\over\de_j}} 2\alpha_i^2{ {\de_i}  ^{\alpha_i}{\de_j}  ^{\alpha_j}|y|^{\alpha_i-2}\over ({\de_i}  ^{\alpha_i}+{\de_j}  ^{\alpha_j}|y|^{\alpha_i})^2} | \phi_i(\de_j y)|  dy\)\ \hbox{(because $\mathcal{K}\subset {A_j\over\de_j}$)}\\
&=O\(\sum\limits_{i=1\atop i\not=j}^k \int\limits_{{A_j }} 2\alpha_i^2{ {\de_i}  ^{\alpha_i} |x|^{\alpha_i-2}\over ({\de_i}  ^{\alpha_i}+ |x|^{\alpha_i})^2} | \phi_i(x) |dx\)\ \hbox{(we scale $ x=\de_j y$)}\\
&=O\(\sum\limits_{i=1\atop i\not=j}^k\(\int\limits_{{A_j }}\left| 2\alpha_i^2{ {\de_i}  ^{\alpha_i} |x|^{\alpha_i-2}\over ({\de_i}  ^{\alpha_i}+ |x|^{\alpha_i})^2} dx \right|^p\)^{1/p} \| \phi_i \|_q\)\ \hbox{(we use H\"older's estimate)}\\ &=o(1) \ \hbox{(we use   estimate \eqref{er6} and the fact that  $| \phi_i |_q\le 1$)}   \end{align*}
and
$$\int\limits_{\mathcal{K}}\de_j^2\psi_j(\de_j y)\varphi(y)  dy=O\(\int\limits_{\Omega^j}\de_j^2|\psi_j(\de_j y)|  dy\)=O\(\int\limits_{\Omega } |\psi_j(x)|  dx\)=O(\|\psi_j\|_p)=o(1).$$
By \eqref{phi0} we deduce that $\tilde \phi_j^* $ is a solution to the equation
$$-\Delta \tilde \phi_j^* =2\alpha_j^2{|y|^{\alpha_j-2}\over (1+|y|^{\alpha_j})^2}\tilde \phi_j^*\ \hbox{in}\ \rr^2\setminus\{0\}.$$
Finally, since $\int\limits_{\rr^2}|\nabla \phi^j_0(y)|^2dy\le1$ it is standard to see that $\tilde \phi_j^*$ is a solution in the whole space $\rr^2.$
By Theorem \ref{esposito}  we get the claim.

\medskip
{\em Step 2: we will show that $\gamma_j=0$ for any $j=1,\dots,k.$}

Here we are  inspired by some ideas used by Gladiali-Grossi \cite{gg}.

We set
\begin{equation}\label{sigmai}\sigma_i(\lambda):= \ln\la \int\limits_{\Omega_i} 2\al_i^2{ |y|^{\al_i-2}\over \(1+|y|^{\al_ i}\)^2}\tilde \phi_i(y)dy.\end{equation}
We will show that
\begin{equation}\label{sigma}
\sigma_i :=\lim\limits_{\la\to0} \sigma_i(\lambda)=0\ \hbox{for any}\ i=1,\dots,k.
\end{equation}
We know that $\phi_i$ solves the problem (see \eqref{s2.3})
\begin{equation}\label{cru0}
-\Delta  \phi_i=2\al_i^2{\de_i^{\al_i}|x|^{\al_i-2}\over \(\de_i^{\al_i}+|x|^{\al_i}\)^2}\phi_i-{1\over2} \sum\limits_{j=1\atop j\not=i}^k2\al_j^2{\de_j^{\al_j}|x|^{\al_j-2}\over \(\de_j^{\al_j}+|x|^{\al_j}\)^2}\phi_j+\psi_i\ \hbox{in}\ \Omega,\ \phi_i=0\ \hbox{on}\ \partial\Omega.\end{equation}
Set $Z_i(x):={\de_i^{\al_i}-|x|^{\al_i}\over \de_i^{\al_i}+|x|^{\al_i}.}$ We know that
$Z_i$ solves (see Theorem \ref{esposito})
$$-\Delta Z_i=2\al_i^2{\de_i^{\al_i}|x|^{\al_i-2}\over \(\de_i^{\al_i}+|x|^{\al_i}\)^2}Z_i\quad\hbox{in}\ \rr^2.$$
Let $PZ_i$ be its projection onto $\mathrm{H}^1_0(\Omega) $ (see \eqref{pro}), i.e.
\begin{equation}\label{cru1}
-\Delta  PZ_i=2\al_i^2{\de_i^{\al_i}|x|^{\al_i-2}\over \(\de_i^{\al_i}+|x|^{\al_i}\)^2}Z_i\ \hbox{in}\ \Omega,\ PZ_i=0\ \hbox{on}\ \partial\Omega.\end{equation}
By maximum principle (see also Lemma \ref{pwi-lem}) we deduce that
\begin{equation}\label{pz}PZ_i(x)=Z_i(x)+1+O\(\de_i^{\al_i}\)={2\de_i^{\al_i} \over \de_i^{\al_i}+|x|^{\al_i} }+O\(\de_i^{\al_i}\)\end{equation}
frow which we get
\begin{equation}\label{cru2}
  PZ_i(\de_j y)=\left\{\begin{aligned}
  & O\({1\over|y|^{\al_i}}\({\de_i\over\de_j}\)^{\al_i}\) +O\(\de_i^{\al_i}\)\ \hbox{if}\ i<j,\\
&{2  \over 1+|y|^{\al_i} }+ O\(\de_i^{\al_i}\)\ \hbox{if}\ i=j,\\
& 2 +O\({ |y|^{\al_i}}\({\de_j\over\de_i}\)^{\al_i}\)+ O\(\de_i^{\al_i}\)\ \hbox{if}\ i>j.\\
 \end{aligned}\right.\end{equation}

Now,  we multiply \eqref{cru0} by $(\ln\la)  PZ_i$ and \eqref{cru1} by $(\ln\la)\phi.$ If we subtract the two equations obtained, we get
\begin{align*}
 & \ln\la\int\limits_\Omega   2\al_i^2{\de_i^{\al_i}|x|^{\al_i-2}\over \(\de_i^{\al_i}+|x|^{\al_i}\)^2}\phi_i(x)PZ_i(x)dx-{1\over2}
  \ln\la\sum\limits_{j=1\atop j\not=i }^k\int\limits_\Omega   2\al_j^2{\de_j^{\al_j}|x|^{\al_j-2}\over \(\de_j^{\al_j}+|x|^{\al_j}\)^2}\phi_j(x)PZ_i(x)dx
  \\ &+\ln\la\int\limits_\Omega \psi_i(x) PZ_i(x)dx =\ln\la \int\limits_\Omega   2\al_i^2{\de_i^{\al_i}|x|^{\al_i-2}\over \(\de_i^{\al_i}+|x|^{\al_i}\)^2}\phi_i(x) Z_i(x)dx    \end{align*}
and so
\begin{align}\label{cru3}
 &\ln\la\int\limits_\Omega   2\al_i^2{\de_j^{\al_i}|x|^{\al_i-2}\over \(\de_i^{\al_i}+|x|^{\al_i}\)^2}\phi_i(x) \(PZ_i(x)-Z_i(x)\) dx\nonumber\\ &-{1\over2}
 \ln\la\sum\limits_{j=1\atop j\not=i }^k\int\limits_\Omega   2\al_j^2{\de_j^{\al_j}|x|^{\al_j-2}\over \(\de_j^{\al_j}+|x|^{\al_j}\)^2}\phi_j(x)PZ_i(x)dx\nonumber\\ &+\ln\la\int\limits_\Omega \psi_i(x) PZ_i(x)dx=0.   \end{align}

We are going to pass to the limit in \eqref{cru3}.

The last term is
\begin{equation}\label{cru4}\ln\la\int\limits_\Omega \psi_i(x) PZ_i(x)dx=O\(|\ln\la|\|\psi_i\|_p\)=o(1),\end{equation}
because of \eqref{inv2} and since by \eqref{pz} we get
$\|PZ_i\|_\infty=O(1).$

The first term is
\begin{align}\label{cru5}
 &\ln\la\int\limits_\Omega   2\al_i^2{\de_i^{\al_i}|x|^{\al_i-2}\over \(\de_i^{\al_i}+|x|^{\al_i}\)^2}\phi_i(x) \(PZ_i(x)- Z_i(x)\)dx \nonumber\\ &\qquad \hbox{(we scale $x=\de_iy$ and we apply \eqref{pz})}\nonumber\\
&=\ln\la \int\limits_{\Omega_i} 2\al_i^2{ |y|^{\al_i-2}\over \(1+|y|^{\al_i}\)^2}\tilde \phi_i(y) dy +O\(\de_i^{\al_i}|\ln\la| \int\limits_{\Omega_i} 2\al_i^2{ |y|^{\al_i-2}\over \(1+|y|^{\al_i}\)^2}|\tilde\phi_i(y)| dy\)\nonumber\\ &\qquad  \hbox{(we use \eqref{sigmai} and \eqref{step1})}\nonumber\\
&= \sigma_i( \la) +o(1).\end{align}

We estimate the second term.
If $j\not=i$ we get
\begin{align}\label{cru6}
&\ln\la \int\limits_\Omega 2\al_j^2{\de_j^{\al_j}|x|^{\al_j-2}\over \(\de_j^{\al_j}+|x|^{\al_j}\)^2}\phi_j(x)PZ_i(x)dx\ \hbox{(we scale $x=\de_jy$)}\nonumber\\
&=\ln\la \int\limits_{\Omega_j} 2\al_j^2{ |y|^{\al_j-2}\over \(1+|y|^{\al_j}\)^2}\tilde\phi_j(y)PZ_i(\de_j y)dy\ \hbox{(we use \eqref{cru2})}\nonumber\\
&=\left\{\begin{aligned}
& 2\ln\la\int\limits_{\Omega_j} 2\al_j^2{ |y|^{\al_j-2}\over \(1+|y|^{\al_j}\)^2}\tilde\phi_j(y)dy   +\\
&\qquad +O\(|\ln\la|\int\limits_{\Omega_j} 2\al_j^2{ |y|^{\al_j-2}\over \(1+|y|^{\al_j}\)^2}|\tilde\phi_j(y)| \({ |y|^{\al_i}}\({\de_j\over\de_i}\)^{\al_i}+\de_i^{\al_i}\)dy\) \ & \hbox{if $j<i$}\\
&\\
&O\( |\ln\la|\int\limits_{\Omega_j} 2\al_j^2{ |y|^{\al_j-2}\over \(1+|y|^{\al_j}\)^2}|\tilde\phi_j(y)| \({1\over|y|^{\al_i}}\({\de_i\over\de_j}\)^{\al_i} +\de_i^{\al_i}\)dy\)\ & \hbox{if $j>i.$}\\
\end{aligned}\right.\nonumber\\
&\qquad\hbox{(we use \eqref{sigmai}, \eqref{crux}, \eqref{cruy} and  \eqref{cruz})}\nonumber\\
&=\left\{\begin{aligned}
&2\sigma_j(\la) +o(1)\ & \hbox{if $j<i$}\\
& o(1)\ & \hbox{if $j>i.$}\\
\end{aligned}\right.
\end{align}

 By \eqref{cru3}, \eqref{cru4}, \eqref{cru5} and  \eqref{cru6} we get
 $$\sigma_1(\la)=o(1)\ \hbox{and}\ \sigma_i(\la)- \sum\limits_{j=1}^{i-1}\sigma_j(\la)=o(1)\ \hbox{for any}\ i=2,\dots,k, $$
 which implies passing to the limit and using the definition of $\sigma_i$ given in  \eqref{sigma},
$$\sigma_1 =0\ \hbox{and}\ \sigma_i -\sum\limits_{j=1}^{i-1}\sigma_j =0\ \hbox{for any}\ i=2,\dots,k. $$
Therefore, \eqref{sigma} immediately follows.

We used the following three estimates.
If $j<i$ we have
\begin{align}\label{crux}
&\( |\ln\la|{\de_j\over\de_i}\)^{\al_i}\int\limits_{\Omega_j}{|y|^{\al_j+\al_i-2}\over\(1+|y|^{\al_j}\)^2}|\tilde\phi_j(y)|dy\ \hbox{(by H\"older's inequality)}\nonumber\\
&=O\( |\ln\la|\({\de_j\over\de_i}\)^{\al_i}\de_j^{2(1-p)\over p}\|\phi_j\|\(\int\limits_{\rr^2 }\({|y|^{\al_j+\al_i-2}\over\(1+|y|^{\al_j}\)^2} \)^pdy  \)^{1/p}\)\nonumber\\ &\qquad \hbox{(we use $\al_j >\al_i$ and we choose $p$ close to 1)}\nonumber\\
&=O\( |\ln\la|\({\de_j\over\de_i}\)^{\al_i}\de_j^{2(1-p)\over p} \)=o(1)
\end{align}
and if $j>i$ we have
\begin{align}\label{cruy}
&\( |\ln\la|{\de_i\over\de_j}\)^{\al_i}\int\limits_{\Omega_j}{1\over|y|^{\al_i-\al_j+2}\(1+|y|^{\al_j}\)^2}|\tilde\phi_j(y)|dy\ \hbox{(by H\"older's inequality)}\nonumber\\
&=O\( |\ln\la|\({\de_j\over\de_i}\)^{\al_i}\de_j^{2(1-p)\over p}\|\phi_j\|\(\int\limits_{\rr^2 }\({1\over|y|^{\al_i-\al_j+2}\(1+|y|^{\al_j}\)^2} \)^pdy  \)^{1/p}\)\nonumber\\ &\qquad \hbox{(we use $\al_i >\al_j$ and we choose $p$ close to 1)}\nonumber\\
&=O\( |\ln\la|\({\de_j\over\de_i}\)^{\al_i}\de_j^{2(1-p)\over p} \)=o(1);
\end{align}
moreover for any $i$ and $j$ we have
\begin{align}\label{cruz}
&  |\ln\la| \de_i ^{\al_i}\int\limits_{\Omega_j}{|y|^{\al_ j-2}\over\(1+|y|^{\al_j}\)^2}|\tilde\phi_j(y)|dy\ \hbox{(by H\"older's inequality)}\nonumber\\
&=O\( |\ln\la|\de_i ^{\al_i}\de_j^{2(1-p)\over p}\|\phi_j\|\(\int\limits_{\rr^2 }\({|y|^{\al_ j-2}\over \(1+|y|^{\al_j}\)^2} \)^p dy \)^{1/p}\)\nonumber\\ &\qquad \hbox{(  we choose $p$ close to 1)}\nonumber\\
&=O\( |\ln\la|\de_i ^{\al_i}\de_j^{2(1-p)\over p} \)=o(1).
\end{align}

 \medskip

Finally, we have all the ingredients to show that
\begin{equation}\label{ai}
\gamma_i  =0\ \hbox{for any}\ i=1,\dots,k.
\end{equation}

  We know that $Pw_i$  solves the problem
\begin{equation}\label{cr1}
-\Delta  Pw_i=2\al_i^2{\de_i^{\al_i}|x|^{\al_i-2}\over \(\de_i^{\al_i}+|x|^{\al_i}\)^2} \ \hbox{in}\ \Omega,\ Pw_i=0\ \hbox{on}\ \partial\Omega.\end{equation}
Now,  we multiply \eqref{cr1} by $ \phi_i$  and \eqref{cru0} by $ Pw_i,$   we subtract the two equations  and  we get
\begin{align}\label{cr2}
& \int\limits_\Omega   2\al_i^2{\de_i^{\al_i}|x|^{\al_i-2}\over \(\de_i^{\al_i}+|x|^{\al_i}\)^2}\phi_i(x)  dx
\nonumber \\ &= \int\limits_\Omega   2\al_i^2{\de_i^{\al_i}|x|^{\al_i-2}\over \(\de_i^{\al_i}+|x|^{\al_i}\)^2}\phi_i(x) Pw_i dx
 -{1\over2} \sum\limits_{j=1\atop j\not=i }^k\int\limits_\Omega   2\al_j^2{\de_j^{\al_j}|x|^{\al_j-2}\over \(\de_j^{\al_j}+|x|^{\al_j}\)^2}\phi_j(x)Pw_i(x)dx \nonumber \\ &+ \int\limits_\Omega \psi_i(x) Pw_i(x)dx.  \end{align}

We want to pass to the limit in \eqref{cr2}.

The L.H.S. of \eqref{cr2} reduces to
 \begin{align}\label{cr4}
& \int\limits_\Omega   2\al_i^2{\de_i^{\al_i}|x|^{\al_i-2}\over \(\de_i^{\al_i}+|x|^{\al_i}\)^2}\phi_i(x)  dx\ \hbox{(we scale $x=\de_iy$)}\nonumber\\
&= \int\limits_{\Omega_i} 2\al_i^2{ |y|^{\al_i-2}\over \(1+|y|^{\al_i}\)^2}\tilde\phi_i(y) dy =o(1)\  \hbox{(because of \eqref{ex1} and \eqref{step1}).}
  \end{align}

The last term of the R.H.S. of \eqref{cr2} gives
 \begin{align}\label{cr41}\int\limits_\Omega \psi_i(x) Pw_i(x)dx=O\(|\ln\la|\|\psi_i\|_p\)o(1),\end{align}
  because of \eqref{inv2} and since by   \eqref{pro-exp}   we get
$\|Pw_i\|_\infty= O(|\ln\la|).$

Finally, we claim that  the first term of the R.H.S. of \eqref{cr2} is
  \begin{align}\label{cr42}
& \int\limits_\Omega   2\al_i^2{\de_i^{\al_i}|x|^{\al_i-2}\over \(\de_i^{\al_i}+|x|^{\al_i}\)^2}\phi_i(x) Pw_i dx
 -{1\over2} \sum\limits_{j=1\atop j\not=i }^k\int\limits_\Omega   2\al_j^2{\de_j^{\al_j}|x|^{\al_j-2}\over \(\de_j^{\al_j}+|x|^{\al_j}\)^2}\phi_j(x)Pw_i(x)dx \nonumber \\ &=\left\{\begin{aligned}
  &4\pi\al_i\(\gamma_i-\sum\limits_{j=i+1}^k \gamma_j\)+o(1)&\ \hbox{if}\ i=1,\dots,k-1,\\
  & 4\pi\al_k\gamma_k +o(1)&\ \hbox{if}\ i=k.\\
  \end{aligned}\right.\end{align}
  Therefore, passing to the limit, by \eqref{cr2}, \eqref{cr4}, \eqref{cr41} and \eqref{cr42}
  we immediately get
  $$ \gamma_k =0\ \hbox{and}\ \gamma_i-\sum\limits_{j=i+1}^k \gamma_j=0\ \hbox{for any}\ i=1,\dots,k-1,$$
  which implies \eqref{ai}.

It only remains to    prove \eqref{cr42}. We have
\begin{align*}
&\int\limits_\Omega   2\al_j^2{\de_j^{\al_j}|x|^{\al_j-2}\over \(\de_j^{\al_j}+|x|^{\al_j}\)^2}\phi_j(x)Pw_i(x)dx\ \hbox{(we scale $x=\de_jy$)}\nonumber\\
&=\int\limits_{\Omega_j}   2\al_j^2{ |y|^{\al_j-2}\over \(1+|y|^{\al_j}\)^2}\tilde\phi_j(y)Pw_i(\de_jy)dy\ \hbox{(we use \eqref{pwi})}\nonumber\\ \end{align*}
\begin{align*}
&=\left\{\begin{aligned}
&  \int\limits_{\Omega_j} 2\al_j^2{ |y|^{\al_j-2}\over \(1+|y|^{\al_j}\)^2}\tilde\phi_j(y)\(-2\al_i \ln\de_i+h_i(0)\)dy   +\\
&\qquad +O\( \int\limits_{\Omega^j} 2\al_j^2{ |y|^{\al_j-2}\over \(1+|y|^{\al_j}\)^2}|\tilde\phi_j(y)|
 \({ |y|^{\al_i}}\({\de_j\over\de_i}\)^{\al_i}+\de_j|y|+\de_i^{\al_i}\)dy\)
\ & \hbox{if $j<i$}\\
&\\
 &  \int\limits_{\Omega_i} 2\al_i^2{ |y|^{\al_i-2}\over \(1+|y|^{\al_i}\)^2}\tilde\phi_i(y)\(-2\al_i\ln \de_i-2 \ln (1+|y|^{\al_i})   +h_i(0)\)dy   +\\
&\qquad +O\( \int\limits_{\Omega^i} 2\al_i^2{ |y|^{\al_i-2}\over \(1+|y|^{\al_i}\)^2}|\tilde\phi_i(y)|
 \( \de_i|y|+\de_i^{\al_i}\)dy\)
\ & \hbox{if $j=i$}\\
&\\
 &  \int\limits_{\Omega_j} 2\al_j^2{ |y|^{\al_j-2}\over \(1+|y|^{\al_j}\)^2}\tilde\phi_j(y)\(-2\al_i\ln\(\de_j|y|\) +h_i(0)\)dy   +\\
&\qquad +O\( \int\limits_{\Omega^j} 2\al_j^2{ |y|^{\al_j-2}\over \(1+|y|^{\al_j}\) }|\tilde\phi_j(y)|
 \({1\over|y|^{\al_i}}\({\de_i\over\de_j}\)^{\al_i}+\de_j|y|+\de_i^{\al_i}\)dy\)
\ & \hbox{if $j>i$}\\
&\\
\end{aligned}\right.\nonumber\\
& \hbox{(we use  the relation between $\de_i$ and $\la$ in \eqref{delta} and we use \eqref{crux}, \eqref{cruy}, \eqref{cruz}  and \eqref{cruw})}\\\end{align*}
\begin{align*}
 &=\left\{\begin{aligned}
&  \int\limits_{\Omega_j} 2\al_j^2{ |y|^{\al_j-2}\over \(1+|y|^{\al_j}\)^2}\tilde\phi_j(y) \[-2\al_i \ln d_i-2\(2(k-i)+1\)\ln\la+h_i(0)\] dy   \\
&\qquad +o(1) \  \hbox{if $j<i$}\\
&\\
 &  \int\limits_{\Omega_i} 2\al_i^2{ |y|^{\al_i-2}\over \(1+|y|^{\al_i}\)^2}\tilde\phi_i(y)\[-2\al_i \ln d_i-2\(2(k-i)+1\)\ln\la-2 \ln (1+|y|^{\al_i}) +h_i(0) \]dy  \\
&\qquad  +o(1) \  \hbox{if $j=i$}\\
&\\
 &  \int\limits_{\Omega_j} 2\al_j^2{ |y|^{\al_j-2}\over \(1+|y|^{\al_j}\)^2}\tilde\phi_j(y)\[-2\al_i \ln d_j-2\(2(k-j)+1\)\ln\la -2\al_i\ln|y| +h_i(0) \] dy  \\
&\qquad   + o(1) \  \hbox{if $j>i$} \\
\end{aligned}\right.\nonumber\\
& \hbox{(we use  the definition of $\sigma_i$ in   \eqref{sigmai} and  we use \eqref{step1} and  \eqref{ex1})}\\
 \end{align*}
 \begin{align*}
 &=\left\{\begin{aligned} &-2\(2(k-i)+1\)\sigma_j(\la) +o(1)
 \\
&\qquad\qquad \hbox{if $j<i$}\\
&\\
 & -2\(2(k-i)+1\)\sigma_i(\la)+ \int\limits_{\Omega_i} 2\al_i^2{ |y|^{\al_i-2}\over \(1+|y|^{\al_i}\)^2}\tilde\phi_i(y)\[ -2 \ln (1+|y|^{\al_i})   \]dy    +o(1)  \\
&\qquad \qquad  \hbox{if $j=i$}\\
&\\
 &  -2\(2(k-j)+1\)\sigma_j(\la)+\int\limits_{\Omega_j} 2\al_j^2{ |y|^{\al_j-2}\over \(1+|y|^{\al_j}\)^2}\tilde\phi_j(y)\[  -2\al_i\ln|y|   \] dy     + o(1)  \\
&\qquad \qquad \hbox{if $j>i$} \\
\end{aligned}\right.\nonumber\\
& \hbox{(we use     \eqref{sigma}  and  \eqref{step1}    because $\ln (1+|y|^{\al_j}),\ln|y|\in \mathrm{L}_{\al_j}(\rr^2)$ )}\\
 \end{align*}
   \begin{align*}
   &=\left\{\begin{aligned} &o(1)
 &\  \hbox{if $j<i$}\\
&\\
 &  \gamma_i \int\limits_{\rr^2} 2\al_i^2{ |y|^{\al_i-2}\over \(1+|y|^{\al_i}\)^2}{ 1-|y|^{\al_i }\over  1+|y|^{\al_i}  } \[ -2 \ln (1+|y|^{\al_i})   \]dy    +o(1)   &\  \hbox{if $j=i$}\\
&\\
 &    \gamma_j \int\limits_{\rr^2} 2\al_j^2{ |y|^{\al_j-2}\over \(1+|y|^{\al_j}\)^2}{ 1-|y|^{\al_j }\over  1+|y|^{\al_j}  }\[  -2\al_i\ln|y|   \] dy     + o(1)  &\ \hbox{if $j>i$} \\
\end{aligned}\right.\nonumber\\
& \hbox{(we use    \eqref{ex2} and \eqref{ex3})}\\
 \end{align*}
   \begin{align}\label{cr5} &=\left\{\begin{aligned}
&  o(1) \ & \hbox{if $j<i$}\\
&\\
 &  4\pi\al_i \gamma_i  +o(1) \ & \hbox{if $j=i$}\\
&\\
 &  8\pi\al_i  \gamma_j    + o(1) \ & \hbox{if $j>i$} \\
\end{aligned}\right.
 \end{align}

If we sum \eqref{cr5} over the index $j$ we get \eqref{cr42}.

We used the following estimate.
For any   $j$ we have
\begin{align}\label{cruw}
&    \de_j  \int\limits_{\Omega_j}{|y|^{\al_ j-1 }\over\(1+|y|^{\al_j}\)^2}|\tilde\phi_j(y)|dy\ \hbox{(by H\"older's inequality)}\nonumber\\
&=O\( \de_j  \de_j^{2(1-p)\over p}\|\phi_j\|\(\int\limits_{\rr^2 }\({|y|^{\al_ j-1}\over \(1+|y|^{\al_j}\)^2} \)^p dy \)^{1/p}\)\nonumber\\ &\qquad \hbox{(  we choose $p$ close to 1)}\nonumber\\
&=O\(   \de_j^{2-p\over p} \)=o(1).
\end{align}

  A straightforward computation leads to
  \begin{align}\label{ex1}
 &\int\limits_\Omega   2\al_i^2{ |y|^{\al_i-2}\over \(1+|y|^{\al_i}\)^2}{  1-|y|^{\al_i} \over 1+|y|^{\al_i} }dy=0,\\
\label{ex2}
 &\int\limits_\Omega   2\al_i^2{ |y|^{\al_i-2}\over \(1+|y|^{\al_i}\)^2}{  1-|y|^{\al_i} \over 1+|y|^{\al_i} }\ln\(1+|y|^{\al_i}\)^2dy=-4\pi\al_i,\\
\label{ex3}
 &\int\limits_\Omega   2\al_i^2{ |y|^{\al_i-2}\over \(1+|y|^{\al_i}\)^2}{  1-|y|^{\al_i} \over 1+|y|^{\al_i} }\ln|y|dy=-4\pi.
 \end{align}

\medskip
{\em Step 3: we will show that a contradiction arises!}

 We multiply each equation \eqref{inv1}  by $\phi_i$, we sum over the indices $i$'s and we get
\begin{align*}
1 &=\sum\limits_{i=1}^k\int\limits_{\Omega}2\alpha_i^2{ {\de_i}  ^{\alpha_i}|x|^{\alpha_i-2}\over ({\de_i}  ^{\alpha_i}+|x|^{\alpha_i})^2}	 \phi^2_i(x) dx
-{1\over2}\sum\limits_{i,j=1\atop i\not=j}^k\int\limits_{\Omega}2\alpha_j^2{ {\de_j}  ^{\alpha_j}|x|^{\alpha_j-2}\over ({\de_j}  ^{\alpha_j}+|x|^{\alpha_j})^2} \phi_j  (x) \phi_i (x)dx\\ &+\sum\limits_{i=1}^k\int\limits_{\Omega}\psi_i(x)\phi_i(x)dx    \\
&=\sum\limits_{i=1}^k \int\limits_{\Omega/\de_i} 2\alpha_i^2{  |y|^{\alpha_i-2}\over (1+|y|^{\alpha_i})^2} \tilde\phi_i^2(y) dy\\ &-{1\over2}\sum\limits_{i,j=1\atop i\not=j}^k\int\limits_{\Omega\setminus B(0,\rho)}2\alpha_j^2{ {\de_j}  ^{\alpha_j}|x|^{\alpha_j-2}\over ({\de_j}  ^{\alpha_j}+|x|^{\alpha_j})^2} \phi_j  (x) \phi_i (x)dx\\ &-{1\over2}\sum\limits_{i,j=1\atop i\not=j}^k\int\limits_{B(0,\rho/\de_j)}
2\alpha_j^2{  |y|^{\alpha_j-2}\over (1+|y|^{\alpha_j})^2} \tilde\phi_j(y)\phi_i(\de_j y)dy\\ & +\sum\limits_{i=1}^k\int\limits_{\Omega}\psi_i(x)\phi_i(x)dx   \\
  &=\sum\limits_{i=1}^k \int\limits_{\Omega/\de_i} 2\alpha_i^2{  |y|^{\alpha_i-2}\over (1+|y|^{\alpha_i})^2} \tilde\phi_i^2(y) dy+O\(\sum\limits_{i,j=1\atop i\not=j}^k{\de_j}  ^{\alpha_j}\|\phi_i\|\|\phi_j\|\)\\ &
  +O\(\sum\limits_{i,j=1\atop i\not=j}^k\( \int\limits_{B(0,\rho/\de_j)} 2\alpha_i^2{  |y|^{\alpha_i-2}\over (1+|y|^{\alpha_i})^2} \tilde\phi_i^2(y) dy\)^{1/2}\( \int\limits_{B(0,\rho/\de_j)} 2\alpha_i^2{  |y|^{\alpha_i-2}\over (1+|y|^{\alpha_i})^2} \phi_i^2(\de_j y) dy\)^{1/2}\)
  \\ &+O\(\sum\limits_{i=1}^k \|\psi_i\|_p\|\phi_i\|\) 
  \\ &=o(1)
  \end{align*}
 because     $\tilde \phi_i\to0$   strongly in $ \mathrm{L}_{\al_i}(\mathbb R^2) $ for any $i=1,\dots,k$  
so that
$$\int\limits_{\Omega/\de_i} 2\alpha_i^2{  |y|^{\alpha_i-2}\over (1+|y|^{\alpha_i})^2} \tilde\phi_i^2(y) dy=o(1)$$
and by \eqref{1} we deduce that
$$\int\limits_{B(0,\rho/\de_j)} 2\alpha_i^2{  |y|^{\alpha_i-2}\over (1+|y|^{\alpha_i})^2} \phi_i^2(\de_j y) dy=O(1).$$
Therefore a contradiction arises.

\end{proof}

In the next lemma we establish the decay of each component $\phi_i$ around the origin.
  
\begin{lemma}\label{lem1}
Let $\phi_i$ be the solution of equation \eqref{inv1}
  i.e.
  \begin{equation}\label{46}
  -\Delta\phi_i=|x|^{\alpha_i-2}e^{w_i(x)}\phi_i-\frac12\sum_{l\not=i}
  |x|^{\alpha_l-2}e^{w_l(x)}\phi_l +\psi_i\ \hbox{in}\ \Omega ,\ \phi_i=0\ \hbox{on}\ \partial\Omega.
  \end{equation}
 Let $\beta\in(0,1) $ and  $\rho>0$ be fixed small enough. There exists $\lambda_0>0$ and there exist positive constants $a,b,c$ such that for any $\lambda\in(0,\lambda_0)$
  \begin{equation}
  \label{1}
 | \phi_i(\delta_j x)|\le a{|\ln |x||\over|x|^{\beta/4}}+b|\ln |x||+c\ \hbox{for any}\ x\in  B(0,\rho/ \delta_j).
  \end{equation}
  
  \end{lemma}
  \begin{proof}
  Let $G(x,y)=-{1\over 2\pi} \ln|x-y|+H(x,y)$ be Green's function.
  Equation \eqref{46} can be rewritten as
   \begin{equation}\label{2}\begin{aligned}
   \phi_i({\rm x})&=\sum_{l=1}^k\int\limits_\Omega G({\rm x},y) c_lp_l(y)\phi_l (y)dy+\int\limits_\Omega G({\rm x},y)\psi_i(y) dy .
  \end{aligned}\end{equation} 
  where   $c_i=1$ and $c_l=-\frac12$ if $l\not=i,$ $p_l(y):=|y|^{\alpha_l-2}e^{w_l(y)}.$
  \\
  First of all, we prove that
  \begin{equation}\label{20}
  \left|\int\limits_\Omega G({\rm x},y)\psi_i(y) dy\right|\le c\ \hbox{for any}\ {\rm x}\in B(0,\rho).\end{equation}
  Indeed, we have
   $$\begin{aligned}
 \left|\int\limits_\Omega G({\rm x},y)\psi_i(y) dy\right|&\le c  \int\limits_{\Omega } |\ln|  {\rm x}-  y|| |\psi_i(y)|dy +  \int\limits_{\Omega } |H(  {\rm x}, y)|  |\psi_i(y)| dy\\
 &\le c\(\int\limits_{\Omega } |\ln|  {\rm x}-  y| |^{p\over p-1}dy \)^{p-1\over p}\|\psi_i\|_p+c\(\int\limits_{\Omega } |H({\rm x}, y)|   ^{p\over p-1}dy \)^{p-1\over p}\|\psi_i\|_p\\ &\le
 c\(\int\limits_{\{|  {\rm x}-  y|\le1\}} |\ln|  {\rm x}-  y| |^{p\over p-1}dy \)^{p-1\over p}\|\psi_i\|_p\\ &+c\(\int\limits_{\{2{\rm diam}\Omega\ge |  {\rm x}-  y|\ge1\}} |\ln|  {\rm x}-  y| |^{p\over p-1}dy \)^{p-1\over p}\|\psi_i\|_p\\ &+c\max\limits_{y\in\Omega\atop {\rm x}\in B(0,\rho)}|H({\rm x}, y)|  \({\rm meas}(\Omega) \)^{p-1\over p}\|\psi_i\|_p\\
 &\le c.
  \end{aligned}$$ 
  
 Let us set ${\rm x}=\delta_j x$. Next we prove that for any $l=1,\dots,k$
  \begin{equation}\label{30}
  \left| \int\limits_\Omega G(\delta_j x,y)  p_l(y)\phi_l (y)dy\right|\le a{|\ln |x||\over|x|^{\beta/4}}+b|\ln |x||+c\ \hbox{for any}\ { x}\in B(0,\rho/\delta_j),
  \end{equation}
  where $a,$ $b$ and $c$ does not depend on ${x}$.
  
  Let us scale $y=\delta_l z.$  Therefore we get
    \begin{equation}\label{3}\begin{aligned}
   \int\limits_\Omega G(\delta_j x,y)  p_l(y)\phi_l (y)dy&=  -{1\over2\pi}  \int\limits_{\Omega_l} \ln|\delta_j x-\delta_l z| \tilde p_l(z)\tilde \phi_l (z)dz \\ &+  \int\limits_{\Omega_l} H(\delta_j x,\delta_l z)  \tilde p_l(z)\tilde \phi_l (z)dz
  \end{aligned}\end{equation} 
 where we set $ \tilde p_l(z):={|z|^{\alpha_l-2}\over(1+|z|^{\alpha_l} )^2} $ and $\tilde \phi_l(z):=\phi_l(\delta_l z).$
 In the following $c,$ $c_1$ and $c_2$ will denote positive constants which do not depend on $x.$\\
 
 By mean value theorem we get
 \begin{equation}\label{4}\begin{aligned}
 \int\limits_{\Omega_l} H(\delta_j x,\delta_l z)  \tilde p_l(z)\tilde \phi_l (z)dz&=\int\limits_{\Omega_l} H(0,0) \tilde p_l(z)\tilde \phi_l (z)dz\\ &+O\(\int\limits_{\Omega_l} \(\delta_j |x|+\delta_l |z|\) \tilde p_l(z)|\tilde \phi_l (z)|dz\)\\
 &\le c,\end{aligned}
 \end{equation} 
 because
 $$H(0,0) \int\limits_{\Omega_l}  \tilde p_l(z)\tilde \phi_l (z)dz=o(1),$$
 $$ \int\limits_{\Omega_l}   \tilde p_l(z)|\tilde \phi_l (z)|dz\le 
 \(\int\limits_{\Omega_l}   \tilde p_l(z)\tilde \phi^2_l (z)dz\)^{\frac12}
 \(\int\limits_{\Omega_l}   \tilde p_l(z) dz\)^{\frac12}=o(1)$$
 and
 $$ \delta_l\int\limits_{\Omega_l}   \tilde p_l(z)|z| |\tilde \phi_l (z)|dz\le \delta_l
 \(\int\limits_{\Omega_l}   \tilde p_l(z)\tilde \phi^2_l (z) dz\)^{\frac12}
 \(\int\limits_{\Omega_l}   \tilde p_l(z) |z|^2dz\)^{\frac12}=o(1)$$
since
 \begin{equation}\label{5}
 \int\limits_{\Omega_l}   \tilde p_l(z)\tilde \phi^2_l (z)dz=o(1).
 \end{equation}
  Moreover, 
  \begin{equation}\label{6}\begin{aligned}
  & \int\limits_{\Omega_l} \ln|\delta_j x-\delta_l z| \tilde p_l(z)\tilde \phi_l (z)dz \\ &  = \ln\delta_j\int\limits_{\Omega_l}  \tilde p_l(z)\tilde \phi_l (z)dz  +\int\limits_{\Omega_l} \ln|  x-{\delta_l\over\delta_j} z| \tilde p_l(z)\tilde \phi_l (z)dz\\
   &=o(1)+ \int\limits_{\Omega_l} \ln|  x-{\delta_l\over\delta_j} z| \tilde p_l(z)\tilde \phi_l (z)dz \\ &\le
   c_1{|\ln|x||\over |x|^{\beta/4}}+c_2\ \hbox{if}\ l<j
  \end{aligned}\end{equation} 
 or 
 \begin{equation}\label{7}\begin{aligned}
 &  \int\limits_{\Omega_l} \ln|\delta_j x-\delta_l z| \tilde p_l(z)\tilde \phi_l (z)dz \\ &  = \ln\delta_l\int\limits_{\Omega_l}  \tilde p_l(z)\tilde \phi_l (z)dz +\int\limits_{\Omega_l} \ln|  {\delta_j\over\delta_l}x- z| \tilde p_l(z)\tilde \phi_l (z)dz\\
   & =o(1)+ \int\limits_{\Omega_l} \ln|  {\delta_j\over\delta_l}x- z| \tilde p_l(z)\tilde \phi_l (z)dz\\ &\le
   c_1{|\ln|x| |}+c_2\ \hbox{if}\ l\ge j.
  \end{aligned}\end{equation}
  Indeed we used the following facts. First of all, we know that
  $$\ln\lambda\int\limits_{\Omega_l}  \tilde p_l(z)\tilde \phi_l (z)dz =o(1).$$
  Moreover,
  if $l<j$ 
 \begin{equation}\label{8}\begin{aligned} &\left|\int\limits_{\Omega_l}  \ln|  x-{\delta_l\over\delta_j} z| \tilde p_l(z)\tilde \phi_l (z)dz\right|\\ &= \(\int\limits_{\Omega_l} \ln^2|  x-{\delta_l\over\delta_j} z| \tilde p_l(z)  dz\)^{\frac12}\(\int\limits_{\Omega_l}  \tilde p_l(z)\tilde \phi_l^2 (z)dzdz\)^{\frac12}\\ &
 \le c\(\int\limits_{\mathbb R^2} \ln^2|  x-{\delta_l\over\delta_j} z| \tilde p_l(z)  dz\)^{\frac12}\\
 &\le  \( \int\limits_{\mathbb R^2}  {\ln^4|  x-{\delta_l\over\delta_j} z|\over \(1+|z|^{2+\beta}\)}dz\)^{\frac14}\(\int\limits_{\mathbb R^2}  \(1+|z|^{2+\beta}\)\tilde p_l^2(z)  dz\)^{\frac14}\\
 &\le c {|\ln |x||\over|x|^{\beta/4}}\ \hbox{because of \eqref{s2}}
 \end{aligned}\end{equation}
 and if $l\ge j$
 \begin{equation}\label{9}\begin{aligned} &\left|\int\limits_{\Omega_l}  \ln|  {\delta_j\over\delta_l}x- z| \tilde p_l(z)\tilde \phi_l (z)dz\right|\\ &= \(\int\limits_{\Omega_l} \ln^2|  {\delta_j\over\delta_l}x- z| \tilde p_l(z)  dz\)^{\frac12}\(\int\limits_{\Omega_l}  \tilde p_l(z)\tilde \phi_l^2 (z)dzdz\)^{\frac12}
 \\ &\le c\(\int\limits_{\mathbb R^2} \ln^2|  {\delta_j\over\delta_l}x- z| \tilde p_l(z)  dz\)^{\frac12}\\ &\le \( \int\limits_{\mathbb R^2}  {\ln^4|  {\delta_j\over\delta_l}x- z|\over \(1+|z|^{2+\beta}\)}dz\)^{\frac14}\(\int\limits_{\mathbb R^2}  \(1+|z|^{2+\beta}\)\tilde p_l^2(z)  dz\)^{\frac14}\\
 &\le c_1|\ln |x||+c_2\ \hbox{because of \eqref{s1}.}
 \end{aligned}\end{equation}

  \end{proof}
  \begin{lemma} Let $\beta\in(0,1).$
  There exists $\lambda_0\ge1$ and positive constants $a,b,c$ such that for any $\lambda\in(0,\lambda_0)$
\begin{equation}\label{s1}\int\limits_{\mathbb R^2}  {\ln^4|  \lambda x- z|\over 1+|z|^{2+\beta}}dz\le a\ln^4|x|+b\quad\hbox{for any}\ x\in\mathbb R\end{equation}
  and
   \begin{equation}\label{s2}\int\limits_{\mathbb R^2}  {\ln^4|   x- \lambda z|\over 1+|z|^{2+\beta}}dz\le c{\ln^4|x|\over|x|^\beta}\quad\hbox{for any}\ x\in\mathbb R.\end{equation}
  \end{lemma}
  \begin{proof}
  In the following, $c$ will denote  a constant which does not depend on $x.$\\
  
  Let us prove \eqref{s1}.
  First of all, we have
 $$\begin{aligned}  \int\limits_{\mathbb R^2}  {\ln^4|  \lambda x- z|\over 1+|z|^{2+\beta}}dz& =\int\limits_{\mathbb R^2}  {\ln^4|  z|\over 1+|\lambda x- z|^{2+\beta}}dz\\
 &=\int\limits_{\{|z|\le1\}}  {\ln^4|  z|\over 1+|\lambda x- z|^{2+\beta}}dz+\int\limits_{\{2|x|\ge |z|\ge1\}}  {\ln^4|  z|\over 1+|\lambda x- z|^{2+\beta}}dz\\ &+\int\limits_{\{ |z|\ge1\}\cap\{|z|\ge 2|x|\}}  {\ln^4|  z|\over 1+|\lambda x- z|^{2+\beta}}dz .
 \end{aligned}$$
  Now,
  $$ \int\limits_{\{|z|\le1\}}  {\ln^4|  z|\over 1+|\lambda x- z|^{2+\beta}}dz\le \int\limits_{\{|z|\le1\}}  {\ln^4|  z| }dz\le c.$$
  $$\begin{aligned}\int\limits_{\{2|x|\ge |z|\ge1\}}  {\ln^4|  z|\over 1+|\lambda x- z|^{2+\beta}}dz&\le \ln^4( 2|x|)\int\limits_{\mathbb R^2}  {1\over 1+|\lambda x- z|^{2+\beta}}dz\\ &=\ln^4( 2|x|)\int\limits_{\mathbb R^2}  {1\over 1+|  z|^{2+\beta}}dz\le c\ln^4( 2|x|). \end{aligned}$$
  $$\begin{aligned}\int\limits_{\{ |z|\ge1\}\cap\{|z|\ge 2|x|\}}  {\ln^4|  z|\over 1+|\lambda x- z|^{2+\beta}}dz&\le \int\limits_{\{ |z|\ge1\}\cap\{|z|\ge 2|x|\}}  {\ln^4|  z|\over 1+2^{-(2+\beta)}| z|^{2+\beta}}dz\\ & \le \int\limits_{\mathbb R^2}  {\ln^4|  z|\over 1+2^{-(2+\beta)}| z|^{2+\beta}}dz\le c,\end{aligned}$$
  because in the last case
  $$|z-\lambda x|\ge |z|-\lambda|x|\ge |z|\(1-\frac \lambda 2\)\ge {|z|\over 2}\ \hbox{if}\ {\lambda\le1}.$$
  Collecting all the previous estimates, \eqref{s1} follows.
  \bigskip
  
  Let us prove \eqref{s2}.
  First of all, by scaling $\lambda z-x=y$ we get
 $$\begin{aligned}  \int\limits_{\mathbb R^2}  {\ln^4|  x-  \lambda z|\over 1+|z|^{2+\beta}}dz& =\lambda^\beta \int\limits_{\mathbb R^2}  {\ln^4|  y|\over \lambda^{2+\beta}+|y+x|^{2+\beta}}dy\\
 &=\int\limits_{\{|y|\le1\}\cap\{|y|\le|x|/2\} }  {\lambda^\beta\ln^4|  y|\over \lambda^{2+\beta}+|y+x|^{2+\beta}}dy+  \int\limits_{\{|x|/2\le |y|\le1\}  }  {\lambda^\beta\ln^4|  y|\over \lambda^{2+\beta}+|y+x|^{2+\beta}}dy\\
 &+ \int\limits_{\{|y|\ge1\}\cap\{|y|\ge2|x| \} }  {\lambda^\beta\ln^4|  y|\over \lambda^{2+\beta}+|y+x|^{2+\beta}}dy+  \int\limits_{\{2|x| \ge |y|\ge1\}  }  {\lambda^\beta\ln^4|  y|\over \lambda^{2+\beta}+|y+x|^{2+\beta}}dy
 \end{aligned}$$
 Now,
 $$\begin{aligned}
  \int\limits_{\{2|x| \ge |y|\ge1\}  }  {\lambda^\beta\ln^4|  y|\over \lambda^{2+\beta}+|y+x|^{2+\beta}}dy&\le  \ln^4(2|x|)
   \int\limits_{\{2|x| \ge |y|\ge1\}  }  {\lambda^\beta \over \lambda^{2+\beta}+|y+x|^{2+\beta}}dy\\ &\le \ln^4(2|x|) \int\limits_{\mathbb R^2 }  {\lambda^\beta \over \lambda^{2+\beta}+|y+x|^{2+\beta}}dy\\ &=\ln^4(2|x|) \int\limits_{\mathbb R^2 }  {\lambda^\beta \over \lambda^{2+\beta}+|y |^{2+\beta}}dy\\ &=
   \ln^4(2|x|) \int\limits_{\mathbb R^2 }  {1 \over 1+|y |^{2+\beta}}dy\le c\ln^4(2|x|).
   \end{aligned}$$
  $$\begin{aligned}
  \int\limits_{\{|y|\ge1\}\cap\{|y|\ge2|x| \} }  {\lambda^\beta\ln^4|  y|\over \lambda^{2+\beta}+|y+x|^{2+\beta}}dy &\le \lambda^\beta
   \int\limits_{\{|y|\ge1\} } { \ln^4|  y|\over 2^{-(2+\beta)} |y |^{2+\beta}}dy\le c,
   \end{aligned}$$
because in this case
  $$|y+ x|\ge |y|- |x|\ge   {|y|\over 2}.$$
 $$\begin{aligned}   \int\limits_{\{|x|/2\le |y|\le1\}  }  {\lambda^\beta\ln^4|  y|\over \lambda^{2+\beta}+|y+x|^{2+\beta}}dy 
&\le \ln^4(|x|/2)  \int\limits_{\{|x|/2\le |y|\le1\}  }  {\lambda^\beta \over \lambda^{2+\beta}+|y+x|^{2+\beta}}dy \\
   &\le \ln^4(|x|/2)  \int\limits_{\mathbb R^2  }  {\lambda^\beta \over \lambda^{2+\beta}+|y+x|^{2+\beta}}dy \\
   &=\ln^4(|x|/2)  \int\limits_{\mathbb R^2  }  {\lambda^\beta \over \lambda^{2+\beta}+|y |^{2+\beta}}dy\\
  &=\ln^4(|x|/2)  \int\limits_{\mathbb R^2  }  {1 \over 1+|y |^{2+\beta}}dy
 \le c\ln^4(|x|/2).
  \end{aligned}$$
  Finally, 
  $$\begin{aligned}\int\limits_{\{|y|\le1\}\cap\{|y|\le|x|/2\} }  {\lambda^\beta\ln^4|  y|\over \lambda^{2+\beta}+|y+x|^{2+\beta}}dy\le \int\limits_{\{|y|\le1\}\cap\{|y|\le|x|/2\} }  {\lambda^\beta\ln^4|  y|\over \lambda^{2+\beta}+2^{-(2+\beta)}| x|^{2+\beta}}dy
   \end{aligned}$$
   because in this case
  $$|y+ x|\ge |x|- |y|\ge   {|x|\over 2}.$$
  If $|x|\ge2$ then
  $$\begin{aligned}  \int\limits_{\{|y|\le1\}\cap\{|y|\le|x|/2\} }  {\lambda^\beta\ln^4|  y|\over \lambda^{2+\beta}+2^{-(2+\beta)}| x|^{2+\beta}}dy&\le  \int\limits_{\{|y|\le1\}\cap\{|y|\le|x|/2\} }    \lambda^\beta \ln^4|  y | dy\\ & \le\lambda^\beta\int\limits_{\{|y|\le1\}  }     \ln^4|  y | dy\le c.
   \end{aligned}$$
   and if $|x|\le2$ then
  $$\begin{aligned}  &\int\limits_{\{|y|\le1\}\cap\{|y|\le|x|/2\} }  {\lambda^\beta\ln^4|  y|\over \lambda^{2+\beta}+2^{-(2+\beta)}| x|^{2+\beta}}dy \le {\lambda^\beta \over  2^{-(2+\beta)}| x|^{2+\beta}}\int\limits_{\{|y|\le1\}\cap\{|y|\le|x|/2\} }   \ln^4|  y| dy\\ &\le {\lambda^\beta \over  2^{-(2+\beta)}| x|^{2+\beta}}\int\limits_{ \{|y|\le|x|/2\} }   \ln^4|  y| dy \le
  {c\over | x|^{2+\beta}}\int_0^{|x|\over2}r\ln^4rdr\\
 & = {c\over | x|^{2+\beta}}(| x|/2)^2\(\frac12\ln^4(| x|/2)-\ln^3(| x|/2)+\frac32\ln^2(| x|/2)-\frac32\ln(| x|/2)+\frac34 \).
   \end{aligned}$$
  Collecting all the previous estimates, \eqref{s2} follows.
  \end{proof}

\section{A contraction mapping argument and the proof of the main theorem}\label{quattro}

First of all we point out  that $\mathbf W_{\la}+\boldsymbol \phi_{\la}$ is a solution to \eqref{p} if and only if
  $\boldsymbol\phi_{\la}$ is a solution of the problem
\begin{equation}\label{L2}
 \mathcal{L}_{\la}(\boldsymbol\phi )=\mathcal{N}_{\la}(\boldsymbol\phi )+ \mathcal{S}_{\la}(\boldsymbol\phi )+\mathcal{R}_{\la} \  \hbox{in}\  \Omega \\\\
\end{equation}
where the error term $\mathcal{R}_\la$ is defined in \eqref{rla},
the linear operator $\mathcal{L}_{\la}$ is defined in \eqref{lla}, the higher order linear operator $\mathcal{S}_{\la}(\boldsymbol\phi )$
is defined as
\begin{align}\label{sla}
&\mathcal{S}_{\la}(\boldsymbol\phi ):=\(S^1_\la(\boldsymbol\phi ),\dots,S^k_\la(\boldsymbol\phi )\),\ \hbox{where}\nonumber\\ &
 S^i_\la(\boldsymbol\phi ):=\(|x|^{\al_i-2}e^{w_i}-2\la e^{W^i_\la}\) \phi^i+\sum\limits_{j=1\atop j\not=i}^k\(-{1\over2} |x|^{\al_j-2}e^{w_j}+\la e^{W^j_\la}  \)\phi^j,\ i=1,\dots,k \end{align}
and the higher order term $\mathcal{N}_{\la}$ is defined as
\begin{align}\label{nla}
&\mathcal{N}_{\la}(\boldsymbol\phi ):=\(N^1_\la(\boldsymbol\phi ),\dots,N^k_\la(\boldsymbol\phi )\),\ \hbox{where}\nonumber\\ &
 N^i_\la(\boldsymbol\phi ):=-2\la e^{W^i_\la}\(e^{\phi^i}+1+\phi^i\)+\la\sum\limits_{j=1\atop j\not=i}^ke^{W^j_\la}\(e^{\phi^j}+1+\phi^j\),\ i=1,\dots,k. \end{align}

\begin{prop}\label{resto}
There exists $p_0>1,$  $\la_0>0$ and $R_0>0$ such that for any $p\in(1,p_0 ),$   $\la\in(0,\la_0)$ and $R\ge R_0$ we have
such that
  for any $\lambda \in (0,\lambda_0)$ there exists a unique solution $\boldsymbol\phi_\lambda =\(\phi_\la^1,\dots,\phi_\la^k\)\in  H_e^k$ (see \eqref{hek}) to the system
\begin{equation}\label{eqrid}
  \Delta (W^i_\lambda+ \phi^i_\lambda)+2\la  e^{W^i_\lambda+ \phi^i_\lambda}-\la\sum\limits_{j=1\atop j\not=i}e^{W^j_\la+\phi^j_\la}=0\ \hbox{in}\ \Omega, \phi^i=0\ \hbox{on}\ \partial\Omega \ \hbox{for}\  i=1,\dots,k.
 \end{equation}
   and
$$\|\phi_\lambda\|\le R\la^{ {1\over2^k}{2-p\over p}}|\ln\lambda| $$
for some $\eps>0.$
\end{prop}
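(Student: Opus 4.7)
The plan is to rewrite (\ref{eqrid}) as a fixed-point equation and solve it by Banach's contraction principle in the symmetric space $\mathbf{H}_k$. Since by Proposition \ref{inv} the linear operator $\mathcal{L}_\lambda$ is invertible on $\mathbf{H}_k$, finding $\boldsymbol{\phi}_\lambda$ solving (\ref{L2}) is equivalent to finding a fixed point of
\[
T_\lambda(\boldsymbol{\phi}) := \mathcal{L}_\lambda^{-1}\bigl(\mathcal{N}_\lambda(\boldsymbol{\phi}) + \mathcal{S}_\lambda(\boldsymbol{\phi}) + \mathcal{R}_\lambda\bigr).
\]
All the functions $w_i$, $Pw_i$, $W^i_\lambda$ are radial about the origin, so $\mathcal{R}_\lambda$, $\mathcal{S}_\lambda(\boldsymbol{\phi})$ and $\mathcal{N}_\lambda(\boldsymbol{\phi})$ each live in the $\Re_k$-invariant class and $T_\lambda$ maps $\mathbf{H}_k$ into itself. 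Setting
\[
\tau_\lambda := \lambda^{\tfrac{1}{2^k}\tfrac{2-p}{p}}, \qquad \mathcal{B}_R := \bigl\{\boldsymbol{\phi}\in \mathbf{H}_k : \|\boldsymbol{\phi}\| \le R\,\tau_\lambda |\ln\lambda|\bigr\},
\]
the task reduces to showing that $T_\lambda$ is a contraction from $\mathcal{B}_R$ into itself for $R$ fixed large and $\lambda$ sufficiently small.

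Three $L^p$-estimates are required. First, $\|\mathcal{R}_\lambda\|_p = O(\tau_\lambda)$ holds directly from Lemma \ref{errore}. Second, for the higher-order linear term $\mathcal{S}_\lambda$: on the annulus $A_i$, the coefficient $|x|^{\alpha_i-2}e^{w_i}-2\lambda e^{W^i_\lambda}$ equals $|x|^{\alpha_i-2}e^{w_i}\bigl(1-e^{\Theta_i(x/\delta_i)}\bigr)$, which is $O\bigl(|x|^{\alpha_i-2}e^{w_i}\,|\Theta_i(x/\delta_i)|\bigr)$ by Lemma \ref{teta}, while outside $A_i$ the annular decay is controlled exactly as in the proof of Lemma \ref{errore}. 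Pairing with $\phi^i$ by H\"older's inequality with exponents $(q,q')$, $\tfrac{1}{p}=\tfrac{1}{q}+\tfrac{1}{q'}$, and using the 2D Sobolev embedding $H^1_0(\Omega)\hookrightarrow L^{q'}(\Omega)$ for every $q'<\infty$, one obtains
\[
\|\mathcal{S}_\lambda(\boldsymbol{\phi})\|_p \le C\,\lambda^{\eps_0}\,\|\boldsymbol{\phi}\|
\]
for some $\eps_0>0$, provided $p$ is chosen close enough to $1$. Third, from the pointwise bound $|e^t-1-t|\le \tfrac{1}{2}t^2 e^{|t|}$, H\"older's inequality and Moser--Trudinger's inequality (which gives $\int_\Omega e^{\mu|\phi|}\le C$ on $\mathcal{B}_R$ for every $\mu>0$), one arrives at
\[
\|\mathcal{N}_\lambda(\boldsymbol{\phi})\|_p \le C\,\|\boldsymbol{\phi}\|^2, \qquad \|\mathcal{N}_\lambda(\boldsymbol{\phi}_1)-\mathcal{N}_\lambda(\boldsymbol{\phi}_2)\|_p \le C\bigl(\|\boldsymbol{\phi}_1\|+\|\boldsymbol{\phi}_2\|\bigr)\|\boldsymbol{\phi}_1-\boldsymbol{\phi}_2\|.
\]
An analogous Lipschitz estimate for $\mathcal{S}_\lambda$ is automatic since $\mathcal{S}_\lambda$ is linear in $\boldsymbol{\phi}$.

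Combining these bounds with the invertibility estimate of Proposition \ref{inv} yields
\[
\|T_\lambda(\boldsymbol{\phi})\| \le C|\ln\lambda|\Bigl(\lambda^{\eps_0} R\,\tau_\lambda|\ln\lambda| + R^2\,\tau_\lambda^2|\ln\lambda|^2 + \tau_\lambda\Bigr),
\]
and the right-hand side is dominated by $R\,\tau_\lambda|\ln\lambda|$ whenever $R$ is fixed large and then $\lambda$ is small (the first two contributions are negligible compared to $\tau_\lambda$ because $\lambda^{\eps_0}|\ln\lambda|\to 0$ and $\tau_\lambda|\ln\lambda|^2\to 0$). The parallel computation for $\|T_\lambda(\boldsymbol{\phi}_1)-T_\lambda(\boldsymbol{\phi}_2)\|$ produces a Lipschitz constant $\le 1/2$ in the same regime, so Banach's fixed-point theorem supplies a unique $\boldsymbol{\phi}_\lambda\in\mathcal{B}_R$ solving (\ref{eqrid}) with the claimed bound.

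The main technical hurdle is the estimate for $\mathcal{S}_\lambda$: it demands a careful comparison, on every annulus $A_r$, of the singular Liouville density $|x|^{\alpha_i-2}e^{w_i}$ with the actual bubbling density $\lambda e^{W^i_\lambda}$, transferring the pointwise control on $\Theta_i$ from Lemma \ref{teta} into sharp $L^q$-bounds of the same kind as those in the proof of Lemma \ref{errore}, while preserving room for the $|\ln\lambda|$ loss incurred when applying Proposition \ref{inv}. Once this is in place, the fixed point argument proceeds routinely.
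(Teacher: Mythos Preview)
Your overall strategy coincides with the paper's: rewrite \eqref{eqrid} as the fixed-point equation $T_\lambda(\boldsymbol\phi)=\mathcal L_\lambda^{-1}(\mathcal N_\lambda(\boldsymbol\phi)+\mathcal S_\lambda(\boldsymbol\phi)+\mathcal R_\lambda)$, then use Proposition~\ref{inv}, Lemma~\ref{errore}, an $\mathcal S_\lambda$-estimate based on Lemma~\ref{teta}, and a Moser--Trudinger argument for $\mathcal N_\lambda$ to run a contraction on a ball of radius $R\,\lambda^{\frac{1}{2^k}\frac{2-p}{p}}|\ln\lambda|$. This is exactly the paper's route, and your treatment of $\mathcal R_\lambda$ and $\mathcal S_\lambda$ matches Lemma~\ref{errore} and Lemma~\ref{sla-er}.

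There is, however, a genuine gap in your nonlinear estimate. You assert $\|\mathcal N_\lambda(\boldsymbol\phi)\|_p\le C\|\boldsymbol\phi\|^2$ with $C$ independent of $\lambda$, and then use this in the self-mapping inequality. That bound is false for any $p>1$: the weight $\lambda e^{W^i_\lambda}$ concentrates at the origin and satisfies $\|\lambda e^{W^i_\lambda}\|_q^q\sim\delta_i^{2-2q}\to\infty$ for $q>1$ (this is precisely the computation in the proof of Lemma~\ref{B2}, where one finds $\int_\Omega\lambda^{pr}e^{prW^i_\lambda}\,dx=O(\delta_1^{2-2pr})=O(\lambda^{2^{k-1}(1-pr)})$). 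After H\"older and Moser--Trudinger you therefore only obtain
\[
\|\mathcal N_\lambda(\boldsymbol\phi)\|_p \;=\; O\!\left(\lambda^{\,2^{k-1}\frac{1-pr}{pr}}\,\|\boldsymbol\phi\|^2\right),
\]
with a \emph{diverging} prefactor. The argument is still rescuable: for $p,r$ close enough to $1$ the negative exponent $2^{k-1}\frac{1-pr}{pr}$ is as small as one likes, and it is beaten by the positive exponent $\frac{1}{2^k}\frac{2-p}{p}$ coming from $\tau_\lambda$, so the quadratic term remains $o(\tau_\lambda)$ after multiplication by $|\ln\lambda|$. But this balancing of exponents must be carried out explicitly; as written, your display for $\|T_\lambda(\boldsymbol\phi)\|$ is missing the blowing-up factor and the conclusion ``$\tau_\lambda|\ln\lambda|^2\to0$ suffices'' does not by itself close the loop.

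A minor side remark: $Pw_i$ need not be radial (only $\Re_k$-invariant, since $\Omega$ is merely $k$-symmetric), so your justification that $T_\lambda$ preserves $\mathbf H_k$ should invoke $\Re_k$-invariance rather than radiality.
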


\begin{proof}

 As a consequence of Proposition \ref{inv}, we conclude that $\boldsymbol\phi$
is a solution to \eqref{eqrid} if and only if it is a fixed point
for the operator $ \mathcal{T}_\lambda:
\mathbf H_k\to\mathbf H_k,$ defined by
$$T_\lambda(\boldsymbol\phi)= \left( \mathcal{L}_\lambda\right)^{-1}
\left(\mathcal{N}_\lambda(\boldsymbol\phi)+ \mathcal{S}_{\la}(\boldsymbol\phi ) +\mathcal{R}_\lambda\right),$$
where $\mathcal{L}_\lambda$, $ \mathcal{S}_{\la}$, $\mathcal{N}_\la$ and $\mathcal{R}_\la$  are defined  in \eqref{rla},  \eqref{sla}, \eqref{nla}  and \eqref{rla}, respectively.

 Let   us introduce the ball $B_{\la,R}:=\left\{\boldsymbol\phi\in\mathbf H_k\ :\ \|\boldsymbol\phi\|\le R\lambda^{ {1\over2^k}{2-p\over p} }\right\}$. We will show that $ T_\lambda:B_{\la,R}\to B_{\la,R}$ is a contraction mapping
provided $\la$ is small enough and $R$ is large enough.

\medskip {\em Let us prove that $T_\la$ maps  the ball $B_{\la,r}$ into itself,  i.e.}
\begin{equation}\label{c2.1}
\|\boldsymbol\phi\|\le R\lambda^{ {1\over2^k}{2-p\over p}}|\ln\lambda|\ \Longrightarrow\
\left\|\mathcal{T}_\lambda(\boldsymbol\phi)\right\|\le R\lambda^{ {1\over2^k}{2-p\over p}}|\ln\lambda|.
\end{equation}

By     Lemma \ref{B2} (where we take $h=\mathcal{N}_\lambda(\phi)+ \mathcal{R}_\lambda$), by \eqref{B21}, by Lemma \ref{sla-er} and by Lemma  \ref{errore} we deduce that:
\begin{align*}
\left\|\mathcal{T}_\lambda( \boldsymbol\phi)\right\|&  \le c|\ln\la|  \left(\left\|\mathcal{N}_\lambda (\boldsymbol\phi)\right\|_p +\left\|\mathcal{S}_\lambda (\boldsymbol\phi)\right\|_p  +\left\| \mathcal{R}_\lambda\right\|_p\right)
\\   & \le c|\ln\lambda|\(\lambda^{ 2^{k-1}{1-pr \over
pr}} \|\boldsymbol\phi\|^2+ \la^{{1\over 2^k}{2-p\over p}} \|\boldsymbol\phi\|  + \lambda^{  {1\over2^k}{2-p\over p}}\)
\\   &  \le  R \lambda^{  {1\over2^k}{2-p\over p}}
\end{align*}
provided $r$ and $p$ are close enough to 1,    $R$  is suitable large and $\la $  is  small enough. That proves \eqref{c2.1}.

\medskip {\em Let us prove that $T_\la$ is a contraction mapping,  i.e. there exists $L>1$ such that}
\begin{equation}\label{c2.2}
\|\phi\|\le R \lambda^{{1\over2^k}{2-p\over p}}|\log\rho|\ \Longrightarrow\
\left\|\mathcal{T}_\lambda(\boldsymbol\phi_1)-\mathcal{T}_\lambda(\boldsymbol\phi_2)\right\|\le
L \|\phi_1-\phi_2\|.
\end{equation}

By     Lemma \ref{B2} (where we take $\psi=\mathcal{N}_\lambda(\boldsymbol\phi_1 )-\mathcal{N}_\lambda(\boldsymbol\phi_2 ) $) and by  \eqref{B22}, we deduce that:
\begin{align*}
&\left\|\mathcal{T}_\lambda(\boldsymbol\phi)\right\|   \le c|\ln\la| \(\left\|\mathcal{N}_\lambda(\boldsymbol\phi_1 )-\mathcal{N}_\lambda(\boldsymbol\phi_2 )\right\|_p
+\left\|\mathcal{S}_\lambda(\boldsymbol\phi_1 -\boldsymbol\phi_2 )\right\|_p \)
\le\\ &
  \le c|\ln\la|  \( \lambda^{  2^{k-1}{1-pr \over
pr}}
  \|\boldsymbol\phi_1-\boldsymbol \phi_2\| \( \|\boldsymbol\phi_1\|+\|\boldsymbol \phi_2\|\)+\la^{{1\over 2^k}{2-p\over p}} \|\boldsymbol\phi_1-\boldsymbol \phi_2\| \)  \\   &
  \le L  \sum\limits_{i=1}^k    \|\phi^i_1-\phi^i_2\| =L\|\boldsymbol\phi_1-\boldsymbol\phi_2\| \ \hbox{for some} \ L<1, \end{align*}
provided $r$ and $p$ are close enough to 1,    $R$  is suitable large and $\la $  is  small enough. That proves \eqref{c2.2}.

\end{proof}
\begin{lemma}
\label{sla-er} Let $  \mathcal{S}_\la$ as in \eqref{sla}.
There exists $p_0>1$ and $\la_0>0$ such that for any $p\in(1,p_0 )$ and $\la\in(0,\la_0)$ we have
$$\left\|\mathcal{S}_{\la}(\boldsymbol\phi )\right\|_p=O\(\la^{{1\over 2^k}{2-p\over p}} \|\boldsymbol\phi\|
\)$$\end{lemma}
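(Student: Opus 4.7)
\medskip

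\noindent\textbf{Proof plan for Lemma \ref{sla-er}.}

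The starting point is the identity
\[
2\lambda\, e^{W^j_\lambda(x)} \;=\; |x|^{\alpha_j-2}e^{w_j(x)}\, e^{\Theta_j(x/\delta_j)},
\]
which follows directly from the definition (2.9) of $\Theta_j$. Using it, each summand defining $S^i_\lambda(\boldsymbol\phi)$ can be written in the common form $c_{ij}(x)\phi^j(x)$ with
\[
c_{ij}(x) \;=\; \epsilon_{ij}\,|x|^{\alpha_j-2}e^{w_j(x)}\bigl(1-e^{\Theta_j(x/\delta_j)}\bigr),\qquad \epsilon_{ii}=1,\ \epsilon_{ij}=-\tfrac12\ \text{for}\ j\neq i.
\]
These are exactly the building blocks whose $L^p$-norms were already controlled in Lemma \ref{errore}: on the annulus $A_j$, Lemma \ref{teta} gives $|1-e^{\Theta_j(x/\delta_j)}|\le C|\Theta_j(x/\delta_j)| = O(\delta_j|y|+\lambda^{3/2^k})$ (after scaling $x=\delta_j y$), and on $A_s$ with $s\neq j$ one may simply use the triangle inequality $|c_{ij}|\le C(|x|^{\alpha_j-2}e^{w_j}+\lambda e^{W^j_\lambda})$.

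The plan is then to decouple $c_{ij}$ from $\phi^j$ by H\"older's inequality. Fix $r>1$ and let $r'$ be its conjugate; for each $i,j$ I write
\[
\bigl\|c_{ij}\phi^j\bigr\|_p \;\le\; \|c_{ij}\|_{pr}\,\|\phi^j\|_{pr'}.
\]
For the second factor, because $\Omega\subset\mathbb R^2$ the Sobolev embedding $H^1_0(\Omega)\hookrightarrow L^{q}(\Omega)$ holds for every finite $q$, so
$\|\phi^j\|_{pr'}\le C\|\phi^j\|\le C\|\boldsymbol\phi\|$. For the first factor I re-run the annulus-by-annulus estimate of Lemma \ref{errore} verbatim with the exponent $p$ replaced by $pr$: on $A_j$ one obtains the analogue of (3.3), namely $O(\lambda^{(2-pr)/(2^k pr)})$, while on each $A_s$ with $s\neq j$ the analogues of (3.5)--(3.7) give terms that are $o(\lambda^{(2-pr)/(2^k pr)})$, provided $pr$ is close enough to $1$.

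Summing the $k$ contributions, I conclude
\[
\|\mathcal{S}_\lambda(\boldsymbol\phi)\|_p \;\le\; C\,\lambda^{(2-pr)/(2^k pr)}\,\|\boldsymbol\phi\|.
\]
Choosing $r$ in a sufficiently small right neighbourhood of $1$ (and, if needed, shrinking $p_0$ so that the enlarged exponent $pr$ still lies in the range of validity of Lemma \ref{errore}), the exponent $(2-pr)/(2^k pr)$ stays arbitrarily close to $(2-p)/(2^k p)$ and in particular positive; absorbing the tiny loss into the freedom of choosing $p\in(1,p_0)$ yields exactly the bound in the statement.

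The only genuinely non-trivial ingredient is the pointwise decomposition of $c_{ij}$ via $\Theta_j$, which reduces the estimate to the computations already carried out for the error term; the rest is H\"older bookkeeping together with two-dimensional Sobolev embedding. The main obstacle, if any, is simply making sure that the H\"older exponent $r>1$ can be chosen uniformly in $p$ so that both $pr$ remains below the threshold $p_0$ of Lemma \ref{errore} and the final exponent of $\lambda$ remains positive — both of which hold automatically once $r-1$ is taken small enough.
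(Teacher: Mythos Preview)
Your approach is essentially the same as the paper's: both apply H\"older's inequality to split off $\phi^j$ (bounded in any $L^q$ by Sobolev embedding) from the coefficient $|x|^{\alpha_j-2}e^{w_j}-2\lambda e^{W^j_\lambda}$, and then invoke the annulus-by-annulus computation of Lemma~\ref{errore} at the enlarged exponent to control the latter. You are in fact slightly more careful than the paper about the harmless loss in the exponent ($pr$ versus $p$) coming from H\"older.
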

\begin{proof}
We have that
\begin{align*}
&\left\|\mathcal{S}_{\la}(\boldsymbol\phi )\right\|_p =\sum\limits_{i=1}^k\left\| {S}^i_{\la}(\boldsymbol\phi )\right\|_p \\ &=
O\(\sum\limits_{i=1}^k\left\| \(|x|^{\al_i-2}e^{w_i}-2\la e^{W^i_\la}\) \phi^i \right\|_p \)\ \hbox{(we use H\"older's inequality with ${ 1\over q}+{ 1\over s}=1$)}\\ &
 =O\(\sum\limits_{i=1}^k\left\| \(|x|^{\al_i-2}e^{w_i}-2\la e^{W^i_\la}\) \right\|_{pq} \left\| \phi^i \right\|_{ps} \)\ \hbox{(we use estimate \eqref{er1.2})}\\ &
=O\(\la^{{1\over 2^k}{2-p\over p}}\sum\limits_{i=1}^k \left\| \phi^i \right\|  \) =O\(\la^{{1\over 2^k}{2-p\over p}} \|\boldsymbol\phi\|
\) ,
\end{align*}
which proves the claim. \end{proof}

  \begin{lemma}\label{B2}
    There exists $s_0>1$ and $\la_0>0$   such that for any $p>1,$ $r>1$ with $pr\in(1,s_0)$ and  $\la\in(0,\la_0)$  we have
  for any $\phi,\phi_1,\phi_2\in\{\phi\in \mathrm{H}^1_0(\Omega) \ :\ \|\phi\|\le1\}$
 \begin{equation}\label{B21}
 \left\|\mathcal N_\la(\boldsymbol\phi)\right\|_p=O\( \lambda^{ 2^{k-1}{1-pr \over pr}}\|\boldsymbol\phi \|^2\)\end{equation}
  and
    \begin{equation}\label{B22}
\left\|\mathcal N_\la(\boldsymbol\phi_1)-\mathcal N_\la(\boldsymbol\phi_2)\right\|_p=O\(
 \lambda^{  2^{k-1}{1-pr \over pr}}
   \|\boldsymbol\phi_1-\boldsymbol\phi_2\|\(\|\boldsymbol\phi_1\|+\|\boldsymbol\phi_2\|\)\) .\end{equation}
  \end{lemma}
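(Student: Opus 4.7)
The plan is to bound $\mathcal N_\la(\boldsymbol\phi)$ pointwise via the Taylor remainder $|e^t-1-t|\le\tfrac12 t^2 e^{|t|}$, then split the resulting $L^p$-norm with H\"older to separate the concentrating factor $\la e^{W^j_\la}$ from the $\phi^j$-dependent part, and finally estimate the $\phi^j$-part using the two-dimensional Sobolev embedding together with the Moser--Trudinger inequality.

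For \eqref{B21}, starting from
\begin{equation*}
|N^i_\la(\boldsymbol\phi)(x)|\le C\,\la\sum_{j=1}^k e^{W^j_\la(x)}(\phi^j(x))^2 e^{|\phi^j(x)|},
\end{equation*}
I would apply H\"older with exponents $pr$ and $s:=pr/(r-1)$, so that $1/p=1/(pr)+1/s$, to get
\begin{equation*}
\|\la e^{W^j_\la}(\phi^j)^2 e^{|\phi^j|}\|_p\le\|\la e^{W^j_\la}\|_{pr}\,\|(\phi^j)^2 e^{|\phi^j|}\|_s.
\end{equation*}
By Lemma \ref{pwi-lem}, $2\la e^{W^j_\la}(x)$ is pointwise comparable to $2\al_j^2\de_j^{\al_j}|x|^{\al_j-2}/(\de_j^{\al_j}+|x|^{\al_j})^2$ up to lower-order terms, so the rescaling $x=\de_j y$ (exactly as in the proof of Lemma \ref{errore}) yields
\begin{equation*}
\|\la e^{W^j_\la}\|_{pr}=O\bigl(\de_j^{2(1-pr)/(pr)}\bigr)=O\bigl(\la^{2^{k-2j+1}(1-pr)/(pr)}\bigr),
\end{equation*}
and the maximum over $j$ is attained at $j=1$, giving precisely the exponent $2^{k-1}(1-pr)/(pr)$ in the lemma. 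A further H\"older split with equal exponents $2s$ gives $\|(\phi^j)^2 e^{|\phi^j|}\|_s\le\|\phi^j\|_{4s}^2\,\|e^{|\phi^j|}\|_{2s}$; the two-dimensional Sobolev embedding bounds $\|\phi^j\|_{4s}\le C\|\phi^j\|$, and the elementary AM--GM bound $2s|\phi^j|\le s+s(\phi^j)^2$ together with $\|\phi^j\|\le 1$ and the Moser--Trudinger inequality gives $\|e^{|\phi^j|}\|_{2s}\le C$ as soon as $s<4\pi$. Summing over $j$ and bounding each $\|\phi^j\|$ by $\|\boldsymbol\phi\|$ completes \eqref{B21}.

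For \eqref{B22}, the Taylor bound is replaced by the mean-value estimate for $f(t):=e^t-1-t$: since $|f'(\xi)|=|e^\xi-1|\le|\xi|e^{|\xi|}$, one has componentwise
\begin{equation*}
|f(\phi_1^j)-f(\phi_2^j)|\le C\,|\phi_1^j-\phi_2^j|\,(|\phi_1^j|+|\phi_2^j|)\,e^{|\phi_1^j|+|\phi_2^j|},
\end{equation*}
and the same H\"older/Sobolev/Moser--Trudinger machinery (now with four factors: one concentrating, two linear in $\phi$, one exponential) yields the claim.

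The main obstacle is the compatibility of exponents. The Moser--Trudinger threshold forces $s<4\pi$, hence $r-1>pr/(4\pi)$, which bounds $r$ away from $1$. Meanwhile the contraction argument of Proposition \ref{resto} needs $pr$ close to $1$ so that $2^{k-1}(1-pr)/(pr)$ does not overwhelm the positive exponent $(2-p)/(2^k p)$ in the fixed-point inequality. Both constraints can be met by choosing $p$ very close to $1$ first, then $r>1$ in the small window above $1$ allowed by Moser--Trudinger, which determines the $s_0>1$ in the statement.
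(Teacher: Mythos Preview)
Your overall strategy---pointwise Taylor remainder, H\"older to isolate the concentrating weight $\la e^{W^j_\la}$, Sobolev for the polynomial factor, Moser--Trudinger for the exponential factor, and rescaling of the weight---is exactly the paper's. But your execution of the Moser--Trudinger step contains a genuine gap that breaks the lemma for $k\ge 3$.

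The problem is the unweighted AM--GM $2s|\phi^j|\le s+s(\phi^j)^2$. Combined with the sharp Moser--Trudinger bound $\int e^{4\pi\phi^2/\|\phi\|^2}\le C$ and $\|\phi^j\|\le 1$, this forces $s\le 4\pi$; with $s=pr/(r-1)$ this reads $r>4\pi/(4\pi-p)$, so for $p$ near $1$ one has $r>4\pi/(4\pi-1)\approx 1.087$. Your last paragraph gets the direction of this constraint wrong: Moser--Trudinger does \emph{not} permit ``$r$ in a small window above $1$''---it pushes $r$ \emph{above} a fixed threshold, hence $pr$ is bounded below by roughly $1.087$. But the contraction in Proposition~\ref{resto} requires $2^{k-1}(pr-1)/(pr)<(2-p)/(2^kp)$, which for $p$ near $1$ means $pr<2^{2k-1}/(2^{2k-1}-1)$; already for $k=3$ this is $pr<32/31\approx 1.032$, incompatible with your lower bound. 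So your argument neither proves the lemma as stated (which claims the bound for \emph{all} $pr\in(1,s_0)$) nor yields anything usable in the fixed-point step when $k\ge 3$.

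The fix is exactly what the paper does (Lemma~\ref{tmt}): use the \emph{weighted} Young inequality $\eta|\phi|\le \eta^2\|\phi\|^2/(16\pi)+4\pi\phi^2/\|\phi\|^2$, which gives $\int_\Omega e^{\eta|\phi|}\le C|\Omega|\,e^{\eta^2\|\phi\|^2/(16\pi)}$ for \emph{every} $\eta$. Since $\|\phi\|\le 1$ this bounds the exponential factor by a constant depending only on the H\"older exponents, with no restriction on $s$, and then $pr$ may be taken arbitrarily close to~$1$.

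A secondary imprecision: $2\la e^{W^j_\la}$ is not ``pointwise comparable'' to the $j$-th bubble globally on $\Omega$; that comparison holds only on the annulus $A_j$, where $\Theta_j$ is bounded (Lemma~\ref{teta}). On $A_r$ with $r\ne j$ the weight involves the full product $\prod_{\ell\ne j}(\de_\ell^{\al_\ell}+|x|^{\al_\ell})$ and one must invoke the annular estimates from the proof of Lemma~\ref{errore} (specifically~\eqref{er5.0}) to control $\int_{A_r}(\la e^{W^j_\la})^{pr}$. Your parenthetical reference to Lemma~\ref{errore} is in the right spirit, but the phrasing should be replaced by the explicit annular split.
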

\begin{proof}
For any $i=1,\dots,k$ set
  $$\texttt{N}^i_\la(\phi):=\la e^{W^i_\la}\(e^\phi+1+\phi\).$$
  By the definition of $\mathcal{N}_\lambda$ in \eqref{nla} we immediately deduce that
  \begin{equation}\label{D21}
\left\|\mathcal{N}_\lambda (\boldsymbol\phi)\right\|_p   =   \sum\limits_{i=1}^k\left\| {N}^i_\lambda (\boldsymbol\phi)\right\|_p =
  O\(\sum\limits_{i=1}^k\left\| \texttt{N}^i_\lambda (\boldsymbol\phi)\right\|_p \)
\end{equation}
  and
 \begin{equation}\label{D22}
\left\|\mathcal{N}_\lambda(\boldsymbol\phi_1 )-\mathcal{N}_\lambda(\boldsymbol\phi_2 )\right\|_p
=\sum\limits_{i=1}^k \left\| {N}^i_\la(\phi^i_1)- {N}^i_\la(\phi^i_2)\right\|_p
= O\(\sum\limits_{i=1}^k \left\| \texttt{N}^i_\la(\phi^i_1)-\texttt{N}^i_\la(\phi^i_2)\right\|_p\).\end{equation}
  We are going to prove that there exist some positive constants $c_i$ such that
   \begin{equation}\label{C21}
 \left\|\texttt{N}^i_\la(\phi)\right\|_p=O\(  e^{c_i\|\phi \|^2}\lambda^{ 2^{k-1}{1-pr \over pr}}\|\phi \|^2\)\end{equation}
  and
    \begin{equation}\label{C22}
\left\|\texttt{N}^i_\la(\phi_1)-\texttt{N}^i_\la(\phi_2)\right\|_p=O\(
  e^{c_i(\|\phi_1\|^2+\|\phi_2\|^2)}\lambda^{  2^{k-1}{1-pr \over
pr}}
   \|\phi_1-\phi_2\|(\|\phi_1\|+\|\phi_2\|)\) .\end{equation}
   Estimate \eqref{B21} follows by \eqref{D21}  and   \eqref{C21} since $\|\phi\| \le1$ and estimate \eqref{B22} follows by \eqref{D22}  and \eqref{C22}, since $  \|\phi_1\|, \|\phi_2\|\le1.$

  \medskip
   Let us prove \eqref{C21} and \eqref{C22}. Since   \eqref{C21} follows by   \eqref{C22} choosing
$\phi_2=0 ,$ we only prove (\ref{C22}). We point
out that
$$\texttt{N}^i_\la (\phi_1)-\texttt{N}^i_\la(\phi_2)=
 {\lambda e^{W^i_\la} \left(e^{\phi_1}-e^{\phi_2}-\phi_1+\phi_2\right) } $$

 By the mean value theorem, we easily deduce that
 $$|e^a-e^b-a+b|\le e^{|a|+|b|}|a-b|(|a|+|b|)\ \hbox{for any }a,b\in\rr.$$
 Therefore,    we have
 \begin{eqnarray}\label{I1p}
 & &\|\texttt{N}^i_\la (\phi_1)-\texttt{N}^i_\la(\phi_2)\|_p=\left(\int\limits_\Omega \lambda^p e^{pW^i_\lambda}\left|e^{\phi_1}-e^{\phi_2}-\phi_1+\phi_2\right|^pdx\right)^{1/p}
 \nonumber\\ & &
 \le c\sum\limits_{j=1}^2
 \left(\int\limits_\Omega  \lambda^p e^{pW^i_\lambda} e^{p|\phi_1|+p|\phi_2|}|\phi_1-\phi_2|^p|\phi_j|^p dx\right)^{1/p}  \nonumber\\ & &\qquad \hbox{(we use H\"older's inequality  with ${1\over r}+{1\over s}+{1\over t}=1$ )}
 \nonumber\\ & &
 \le c\sum\limits_{j=1}^2
 \left(\int\limits_\Omega  \lambda^{pr} e^{prW^i_\lambda} dx\right)^{1/(pr)}
 \left(\int\limits_\Omega e^{ps|\phi_1|+ps|\phi_2|}dx\right)^{1/(ps)}
 \left(\int\limits_\Omega |\phi_1-\phi_2|^{pt}|\phi_j|^{pt} dx\right)^{1/(pt)}
  \nonumber\\ & &\qquad\hbox{(we use Lemma \ref{tmt})}
\nonumber\\ & &\
 \le c\sum\limits_{j=1}^2
 \left(\int\limits_\Omega  \lambda^{pr} e^{prW^i_\lambda} dx\right)^{1/(pr)}
  e^{(ps)/(8\pi)(|\phi_1|^2+|\phi_2|^2)}
  \|\phi_1-\phi_2\|\|\phi_j\|.\end{eqnarray}
We have to estimate
   \begin{align*}
  \int\limits_\Omega  \lambda^{pr} e^{prW^i_\lambda(x)} dx&=\sum\limits_{j=1}^k
 \int\limits_{A_j}  \lambda^{pr} e^{prW^i_\lambda(x)} dx,\end{align*}
 where $A_j$ is the annulus defined in \eqref{anelli}.

 If  $j=i$  we get
  \begin{align*}&\int\limits_{A_i} \lambda^{pr}  e^{prW^i_\lambda(x)} dx\ \hbox{(we use \eqref{tetaj})}\\
  &=\delta_i^2\lambda^{pr} \int\limits_{A_i\over\delta_i} e^{pr \[w_i(\delta_iy)+(\alpha_i-2) \ln|\delta_jy|-\ln2\lambda+\Theta_i(y)\]} dy\\ &
  = {\delta_i^{2-2pr} }\int\limits_{A_i\over\delta_i}\( 2\alpha_i^2{|y|^{\alpha_i-2}\over \(1+|y|^{\alpha_i}\)^2}\)^{pr}e^{pr \Theta_i(y) } dy\ \hbox{(we use Lemma \eqref{teta})} \\
  &=O\({\delta_i^{2-2pr} }\)=O\(\lambda^{2^{k-1}(1-pr) }\)\ \hbox{( because $\delta_j\ge\delta_1=O\(\lambda^{2^{k-2}}\)$   and $pr>1 $)}.\end{align*}
 If  $j\not=i$ by \eqref{er5.0} we deduce
   \begin{align*}&\int\limits_{A_j}\lambda^{pr} e^{prW^i_\lambda(x)} dx = o\( \lambda^{ {1\over 2^k} (2-pr) }\).  \end{align*}
Therefore,   estimate \eqref{C22} follows.
\end{proof}

   We recall the following Moser-Trudinger inequality \cite{Moe,Tru},
 \begin{lemma}\label{tmt} There exists $c>0$ such that for any   bounded domain $\Omega$ in $\rr^2$
 $$\int\limits_\Omega e^{4\pi u^2/\|u\|^2}dx\le c|\Omega|,\ \hbox{for any}\ u\in{\rm H}^1_0(\Omega).$$
 In particular,  there exists $c>0$ such that for any $\eta\in\rr$
 $$\int\limits_\Omega e^{\eta u}\le c|\Omega| e^{{\eta^2\over 16\pi}\|u\|^2},\
 \hbox{for any}\ u\in{\rm H}^1_0(\Omega).$$
 \end{lemma}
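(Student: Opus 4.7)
The plan is to establish the first inequality (the sharp Moser--Trudinger estimate) and then derive the second as an immediate corollary via Young's inequality.

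For the first inequality, I would follow the classical rearrangement argument of Moser. By symmetric decreasing rearrangement (Schwarz symmetrization), we may replace $u$ with $u^*$, a radially symmetric decreasing function defined on the ball $B_R$ of the same measure as $\Omega$. Rearrangement preserves $|\Omega|$, preserves $\int_\Omega e^{4\pi u^2/\|u\|^2}$ (since it depends only on the distribution of $|u|$), and does not increase the Dirichlet norm (P\'olya--Szeg\H o). Hence it suffices to prove the estimate for radial $u\in H^1_0(B_R)$. Next, perform the logarithmic change of variable $r=Re^{-t/2}$ and set $v(t):=u(r)\sqrt{4\pi}/\|u\|$; a direct computation gives $v(0)=0$ and $\int_0^\infty |v'(t)|^2\,dt=1$, while the target bound becomes
$$\int_0^\infty e^{v(t)^2-t}\,dt\le C.$$
This one-dimensional inequality is the heart of the proof: by Cauchy--Schwarz, $v(t)^2\le t\int_0^t |v'(s)|^2\,ds\le t$, so the integrand is bounded by $1$ unless $v(t)^2$ is very close to $t$. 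Splitting the integral on the set where $v(t)^2\le t-M$ (which contributes $\le e^{-M}$) and its complement (whose measure is controlled by the energy deficit $1-\int_0^t|v'|^2$) and summing over dyadic shells in $t$ gives a finite bound independent of $v$. This is exactly Moser's argument and yields the sharp constant $4\pi$.

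For the second inequality, Young's inequality in the form $ab\le a^2+\tfrac{b^2}{4}$ applied with $a=\sqrt{4\pi}\,u/\|u\|$ and $b=\eta\|u\|/\sqrt{4\pi}$ yields
$$\eta u\le \frac{4\pi u^2}{\|u\|^2}+\frac{\eta^2\|u\|^2}{16\pi}.$$
Exponentiating and integrating over $\Omega$, the first factor is a constant in $x$ and can be pulled out, while the remaining integral is bounded by $c|\Omega|$ by the first inequality, giving exactly the claimed estimate.

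The main obstacle is the one-dimensional inequality produced after the logarithmic substitution: getting the sharp exponent $4\pi$ (and not a worse constant) requires precisely the dyadic splitting above, because any cruder use of the Sobolev embedding $H^1\hookrightarrow L^p$ (as in Trudinger's original approach) loses the sharp constant. Since the two inequalities in the lemma differ only by an application of Young's inequality, all the real work is in this classical Moser estimate, which is why the paper simply cites \cite{Moe,Tru}.
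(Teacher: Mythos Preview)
Your proposal is correct and follows exactly the classical Moser argument; the paper itself gives no proof of this lemma but simply recalls it with citations to \cite{Moe,Tru}, so your sketch is precisely the content behind those references. The derivation of the second inequality from the first via the pointwise bound $\eta u\le 4\pi u^2/\|u\|^2+\eta^2\|u\|^2/(16\pi)$ is the standard one and matches what the cited works yield.
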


\begin{proof}[Proof of Theorem \ref{main}]
By Proposition \ref{resto} we have that
$\mathbf{u}_\la=\mathbf{W}_\la+	\boldsymbol\phi_\la $
is a solution to \eqref{p}.

\medskip
 Let us prove \eqref{quantix}. Let $i=1,\dots,k$ be fixed.
By the mean value theorem we deduce
  $$\int\limits_{\Omega}\la e^{u^i_\la(x)}dx=\int\limits_{\Omega}\la e^{W^i_\la(x)+\phi^i_\la(x)}dx=\int\limits_{\Omega}\la e^{W^i_\la}\(1+e^{t\phi^i_\la}\phi^i_\la\)dx=\int\limits_{\Omega}\la e^{W^i_\la}dx+o(1),$$
  since, arguing exactly as in the proof of Lemma \ref{B2},
we get (for some ${1\over p}+{1\over q}+{1\over s}=1$)
\begin{align*}
 \int\limits_{\Omega}\la e^{W^i_\la}e^{t\phi^i_\la}\phi^i_\la dx=O\(\|\la e^{W^i_\la}\|_p\|\la e^{t\phi^i_\la}\|_{q}\|\phi^i_\la\|_{s}\)=o(1) .
\end{align*}
So we only have to estimate
\begin{align*}
& \int\limits_{\Omega}\la e^{W^i_\la }dx=\int\limits_{\Omega}\la e^{Pw_i(x)-{1\over2}\sum\limits_{j=1\atop j\not=i}^kP w_j(x)}dx\ \hbox{(we use \eqref{anelli})} \nonumber\\
&  =
 \sum\limits_{r=1}^k \int\limits_{A_r}\la e^{Pw_i(x)-{1\over2}\sum\limits_{j=1\atop j\not=i}^kP w_j(x)}dx\nonumber\\
&  =   \int\limits_{A_i}|x|^{\al_i-2}e^{w_i(x)}dx+ \int\limits_{A_i}\(\la e^{Pw_i(x)-{1\over2}\sum\limits_{j=1\atop j\not=i}^kP w_i(x)}-|x|^{\al_i-2}e^{w_j(x)}\)dx
\nonumber\\ & +\sum\limits_{r=1\atop r\not=j}^k \int\limits_{A_r}\la e^{Pw_i(x)-{1\over2}\sum\limits_{j=1\atop j\not=i}^kP w_j(x)}dx\ \hbox{(we apply \eqref{er1.3} and \eqref{er5.13})} \nonumber
\\ &=  \int\limits_{A_i}|x|^{\al_i-2}e^{w_i(x)}dx+o(1)\ \hbox{(we scale $x=\de_iy$)} \nonumber
\\ & = \int\limits_{A_i\over\de_i}|y|^{\al_i-2}e^{w^{\al_i}(y)}dy+o(1)\ \hbox{(because of \eqref{mass})} \nonumber\\ &= 4\pi\al_i+o(1)  .\end{align*}
Here we used the following result of Chen-Li \cite{cl}
\begin{equation}\label{mass}
\int\limits_{\rr^2} |y|^{\al-2}e^{w^\al(y)}dy=2\al^2\int\limits_{\rr^2} {|y|^{\al-2} \over\(1+|y|^\al\)^2}dy=4\pi\al.
\end{equation}
That concludes the proof.
\end{proof}

\section{Appendix}
We have the following result.

\begin{thm}
\label{esposito}  Assume $\alpha=2^i$ for some integer $i\ge1.$
If $\phi  $  satisfies
\begin{equation}\label{even}\phi(y)=\phi(\Re_k y)\ \hbox{for any     $y\in\rr^2$,}\quad \hbox{where}\ \Re_k:=\(\begin{matrix}\cos{\pi\over k}&\sin{\pi\over k}\\
-\sin{\pi\over k}&\cos{\pi\over k}\\ \end{matrix}\)\end{equation} and
solves the equation
\begin{equation}\label{l1}
-\Delta \phi =2\alpha^2{|y|^{\alpha-2}\over (1+|y|^\alpha)^2}\phi\ \hbox{in}\ \rr^2,\quad \int\limits_{\rr^2}|\nabla \phi(y)|^2dy<+\infty,
\end{equation}
then there exists $\gamma \in\rr$ such that
$$\phi(y)=\gamma   {1-|y|^\al\over 1+|y|^\al}.$$
\end{thm}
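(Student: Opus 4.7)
The approach is a separation-of-variables reduction combined with the symmetry constraint. Writing $y=(r\cos\theta, r\sin\theta)$ and decomposing $\phi$ in a Fourier series in the angular variable,
\[
\phi(r,\theta)=\sum_{n\in\mathbb{Z}} \phi_n(r)\, e^{in\theta},
\]
the equation $-\Delta\phi = \frac{2\alpha^2 |y|^{\alpha-2}}{(1+|y|^\alpha)^2}\phi$ decouples into the family of radial ODEs
\[
-\phi_n'' - \frac{1}{r}\phi_n' + \frac{n^2}{r^2}\phi_n = \frac{2\alpha^2 r^{\alpha-2}}{(1+r^\alpha)^2}\phi_n,\qquad r>0,
\]
one for each $n\in\mathbb{Z}$. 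The symmetry assumption, which in polar coordinates reads $\phi(r,\theta)=\phi(r,\theta-\pi/k)$, forces $\phi_n\equiv 0$ whenever $e^{in\pi/k}\neq 1$; hence only the modes with $n\in 2k\mathbb{Z}$ can be non-zero.

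To analyze each surviving ODE I would perform the logarithmic change of variable $r=e^t$ followed by the rescaling $\tau=\alpha t/2$, setting $v_n(\tau):=\phi_n(r)$. The ODE then takes the form of a P\"oschl--Teller Schr\"odinger equation
\[
-\ddot v_n + \mu_n^2\, v_n = \frac{2}{\cosh^2\tau}\, v_n,\qquad \mu_n:=\frac{2n}{\alpha},
\]
while the finite-energy hypothesis $\int_{\mathbb{R}^2}|\nabla\phi|^2\,dy<\infty$ translates into $v_n(\tau)\to 0$ as $\tau\to\pm\infty$ for $n\neq 0$ and $\dot v_0\in L^2(\mathbb{R})$ for $n=0$. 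For $n=0$ the ODE has the two explicit linearly independent solutions $\tanh\tau$ and a second one that grows linearly at infinity; only the first is compatible with the energy condition, and it yields exactly $\phi_0(r)=\gamma\,(1-r^\alpha)/(1+r^\alpha)$. For $n\neq 0$, the standard spectral theory of the P\"oschl--Teller potential $-2/\cosh^2\tau$ says that the only $L^2$ bound state occurs at eigenvalue $-1$, realized by the eigenfunction $\mathrm{sech}\,\tau$; consequently a nontrivial finite-energy solution exists only when $\mu_n^2=1$, that is, $n=\pm\alpha/2=\pm 2^{i-1}$.

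The main obstacle is therefore to exclude this exceptional mode, which would otherwise contribute the unwanted kernel element $\phi\propto r^{\alpha/2}(1+r^\alpha)^{-1}\cos(\alpha\theta/2)$ and its sine analogue. Here the symmetry plays its decisive role: such a mode is invariant under $\Re_k$ only if $\alpha/2=2^{i-1}$ is a multiple of $2k$, a divisibility condition that fails in the range of indices relevant to the construction, so these modes are not admissible. Once they are ruled out, every Fourier coefficient with $n\neq 0$ must vanish, leaving only the scaling mode $\gamma\,(1-|y|^\alpha)/(1+|y|^\alpha)$ and completing the proof. The direct verifications that $\tanh\tau$ solves the $n=0$ ODE and that the P\"oschl--Teller operator has the claimed spectrum are routine computations that I would carry out by hand rather than importing from outside references.
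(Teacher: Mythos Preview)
Your route differs from the paper's. The paper does not carry out any Fourier or P\"oschl--Teller analysis; it simply quotes two external results---del Pino--Esposito--Musso \cite{dem} for the classification of \emph{bounded} solutions of \eqref{l1} as linear combinations of $\phi_0=\frac{1-|y|^\alpha}{1+|y|^\alpha}$, $\phi_1=\frac{|y|^{\alpha/2}}{1+|y|^\alpha}\cos\frac{\alpha\theta}{2}$, $\phi_2=\frac{|y|^{\alpha/2}}{1+|y|^\alpha}\sin\frac{\alpha\theta}{2}$, and Grossi--Pistoia \cite{gp} for the implication ``finite Dirichlet energy $\Rightarrow$ bounded''---and then discards $\phi_1,\phi_2$ by symmetry. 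Your separation of variables together with the P\"oschl--Teller spectral analysis reproduces exactly this three-dimensional kernel in a self-contained way and lands on the same decisive step; that part of your argument is correct and is a genuine alternative to citing \cite{dem} and \cite{gp}.

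That decisive step, however, contains a gap, and it is one the paper shares. Your assertion that the divisibility condition $2k\mid 2^{i-1}$ ``fails in the range of indices relevant to the construction'' is not always true. In the application the theorem is invoked for each $\alpha_j=2^j$, $1\le j\le k$, with the $\Re_k$-symmetry fixed; the mode $\cos(2^{j-1}\theta)$ is $\Re_k$-invariant precisely when $k\mid 2^{j-2}$. If $k$ is itself a power of two with $k\ge 4$ this \emph{does} occur: for instance with $k=4$, $j=4$ one has $\alpha/2=8=2k$, and $\cos(8\theta)$ is invariant under the rotation by $\pi/4$. In that case the symmetry alone does not eliminate $\phi_1,\phi_2$, so the statement as written is false. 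Your argument (and the paper's) is complete only when $k\le 3$ or when $k$ is not a power of two; for the remaining values of $k$ either the symmetry hypothesis must be strengthened or an additional argument is required to exclude the frequency-$\alpha/2$ modes.
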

\begin{proof}
Del Pino-Esposito-Musso in \cite{dem} proved that all the bounded solutions to \eqref{l1} are a linear combination of the following functions (which are written in polar coordinates)
$$\phi_0(y):   ={1-|y|^\alpha\over 1+|y|^\alpha} ,\  \phi_1(y):={ |y|^{\alpha\over 2}\over 1+|y|^\alpha} \cos{\alpha\over2}\theta
,\
\phi_2(y):={ |y|^{\alpha\over 2}\over 1+|y|^\alpha} \sin{\alpha\over2}\theta.
$$
We observe that $\phi_0$   always satisfies \eqref{even}, while if $\alpha=2^i$ for some integer $i\ge1 $ the functions $\phi_1$ and $\phi_2$ do not satisfy \eqref{even}.
In \cite{gp} it was proved  that any solution $\phi$ of \eqref{l1} is actually a bounded solution. That concludes  the proof.
\end{proof}

 For any $\alpha\ge2$  let us consider the Banach spaces
\begin{equation}\label{ljs}
\mathrm{L}_\alpha(\rr^2):=\left\{u \in {\rm W}^{1,2}_{loc}(\rr^2)\ :\  \left\|{|y|^{\alpha-2\over 2}\over 1+|y|^\alpha}u\right\|_{\mathrm{L}^2(\rr^2)}<+\infty\right\}\end{equation}
 and
\begin{equation}\label{hjs}\mathrm{H}_\alpha(\rr^2):=\left\{u\in {\rm W}^{1,2}_{loc}(\rr^2) \ :\ \|\nabla u\|_{\mathrm{L}^2(\rr^2)}+\left\|{|y|^{\alpha-2\over 2}\over 1+|y|^\alpha}u\right\|_{\mathrm{L}^2(\rr^2)}<+\infty\right\},\end{equation}
 endowed with the norms
$$\|u\|_{\mathrm{L}_\alpha}:= \left\|{|y|^{\alpha-2\over 2}\over 1+|y|^\alpha}u\right\|_{\mathrm{L}^2(\rr^2)}\ \hbox{and}\ \|u\|_{\mathrm{H}_\alpha}:= \(\|\nabla u\|^2_{\mathrm{L}^2(\rr^2)}+\left\|{|y|^{\alpha-2\over 2}\over 1+|y|^\alpha}u\right\|^2_{\mathrm{L}^2(\rr^2)}\)^{1/2}.$$
\begin{prop}\label{compact}
The embedding $i_\al:\mathrm{H}_\alpha(\rr^2)\hookrightarrow\mathrm{L}_\alpha(\rr^2)$
is compact.
\end{prop}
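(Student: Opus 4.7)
The plan is to pass to cylindrical coordinates on $\rr^2$, in which the weight becomes exponentially decaying along the axial direction, and then to decompose every function into its angular average plus a zero-mean remainder, handling the two pieces separately. Writing $y = e^s \omega$ with $s\in\rr$, $\omega\in S^1$, and setting $U(s,\theta) := u(e^s\cos\theta, e^s\sin\theta)$, a direct computation gives
\[
\int_{\rr^2}|\nabla u|^2\,dy \;=\; \int_{\rr\times S^1}\bigl(U_s^2 + U_\theta^2\bigr)\,ds\,d\theta, \qquad \int_{\rr^2}\frac{|y|^{\alpha-2}}{(1+|y|^\alpha)^2}\,u^2\,dy \;=\; \int_{\rr\times S^1} K(s)\,U^2\,ds\,d\theta,
\]
where $K(s) := e^{\alpha s}/(1+e^{\alpha s})^2$ lies in $L^1(\rr)$ and satisfies $K(s)\le C e^{-\alpha|s|}$ for $|s|\ge 1$. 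Thus, given a bounded sequence $\{u_n\}\subset \mathrm H_\alpha$, the corresponding $\{U_n\}$ satisfies $\int(U_{n,s}^2+U_{n,\theta}^2)+\int KU_n^2\le M$ on the cylinder $\mathcal C := \rr\times S^1$, and I would write $U_n = v_n + \tilde U_n$ with $v_n(s) := (2\pi)^{-1}\int_{S^1}U_n(s,\theta)\,d\theta$ and $\tilde U_n$ of zero angular mean on every slice.

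For the non-radial part, Poincar\'e's inequality on $S^1$ gives $\int_{S^1}\tilde U_n^2\,d\theta\le C\int_{S^1}\tilde U_{n,\theta}^2\,d\theta$; integrating in $s$ yields $\|\tilde U_n\|_{H^1(\mathcal C)}\le C$. Rellich--Kondrachov on the strips $[-N,N]\times S^1$ combined with a diagonal argument produces a subsequence such that $\tilde U_n\to\tilde U$ in $L^2_{\mathrm{loc}}(\mathcal C)$, and the tail is controlled uniformly by
\[
\int_{|s|>N} K(s)\,\tilde U_n^2\,ds\,d\theta \;\le\; \Bigl(\sup_{|s|>N}K(s)\Bigr)\,\|\tilde U_n\|_{L^2(\mathcal C)}^2 \;\xrightarrow[N\to\infty]{}\; 0,
\]
by the exponential decay of $K$. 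Hence $\tilde U_n\to\tilde U$ in $L^2(K\,ds\,d\theta)$.

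The radial part is the crux. The average $v_n$ belongs to the homogeneous Sobolev space with $\int_\rr (v_n')^2\,ds\le M/(2\pi)$, hence $|v_n(s)-v_n(s')|\le C|s-s'|^{1/2}$, but \emph{a priori} no global pointwise or $L^2$ bound is available. The key bootstrap is that $\sup_n|v_n(0)|<\infty$: if not, along a subsequence with $|v_n(0)|\to\infty$, the H\"older estimate gives $|v_n(s)|\ge |v_n(0)|/2$ for $|s|\le c\, v_n(0)^2$, and therefore
\[
\int_\rr K(s)\,v_n^2(s)\,ds \;\ge\; \frac{v_n(0)^2}{4}\int_{|s|\le c\,v_n(0)^2}\!\!K(s)\,ds \;\longrightarrow\; +\infty,
\]
since $\int_\rr K>0$, contradicting the uniform bound on $\int K v_n^2$. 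With $\{v_n(0)\}$ bounded, the pointwise estimate $|v_n(s)|\le C(1+|s|^{1/2})$ renders $\{v_n\}$ equi-bounded and equicontinuous on every compact set; Arzel\`a--Ascoli gives $v_n\to v$ locally uniformly along a further subsequence, and the integrable dominant $K(s)\,C^2(1+|s|^{1/2})^2\in L^1(\rr)$ together with dominated convergence yield $\int K(v_n-v)^2\,ds\to 0$. Combining with the non-radial estimate, $U_n\to v+\tilde U$ in $L^2(K\,ds\,d\theta)$, i.e., $u_n\to u$ in $\mathrm L_\alpha$, which is the desired compactness. I expect the bootstrap control of $v_n(0)$---exploiting the interplay between the homogeneous H\"older estimate and the strictly positive total mass $\int_\rr K\,ds$---to be the main technical obstacle, as the absence of a global $L^2$ bound on $v_n$ prevents a direct appeal to any off-the-shelf weighted Sobolev compactness.
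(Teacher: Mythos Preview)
Your argument is correct and self-contained. The paper itself offers no proof of this proposition: its entire argument is the reference ``See \cite{gp}'', so there is no in-paper approach to compare against. Your passage to cylindrical coordinates, which converts the weight into the exponentially decaying kernel $K(s)=e^{\alpha s}/(1+e^{\alpha s})^2$, followed by the angular-average/zero-mean splitting, is clean: the zero-mean piece is genuinely in $H^1$ of the cylinder by Poincar\'e on $S^1$, so Rellich on finite strips plus the uniform tail bound from the decay of $K$ handles it; for the radial average, the bootstrap showing $\sup_n|v_n(0)|<\infty$ from the H\"older-$\tfrac12$ estimate and the strictly positive mass $\int_\rr K\,ds>0$ is exactly the right substitute for the missing global $L^2$ bound, and then Arzel\`a--Ascoli together with the integrable dominant $K(s)(1+|s|^{1/2})^2\in L^1(\rr)$ closes the argument via dominated convergence. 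The only cosmetic point is that the limit $u$ automatically lies in $\mathrm H_\alpha\subset W^{1,2}_{\mathrm{loc}}(\rr^2)$ by weak lower semicontinuity of the $\mathrm H_\alpha$-norm along the bounded sequence, so the $W^{1,2}_{\mathrm{loc}}$ membership required in the definition of $\mathrm L_\alpha$ is met.
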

\begin{proof}
  See \cite{gp}.
\end{proof}

\end{document}